\theoremstyle{plain}
\newtheorem{thm}{Theorem}[section]
\newtheorem{prop}[thm]{Proposition}
\newtheorem{lem}[thm]{Lemma}
\theoremstyle{definition}
\newtheorem{defn}{Definition}[section]
\newtheorem{exmp}{Example}[section]
\theoremstyle{remark}
\numberwithin{equation}{section}
\DeclareMathOperator{\hdim}{\dim_H}
\DeclareMathOperator{\pdim}{\dim_P}
\DeclareMathOperator{\dist}{dist}
\renewcommand{\Im}{\operatorname{Im}}
\renewcommand{\Re}{\operatorname{Re}}
\newcommand{\Q}{\mathbb Q}
\newcommand{\N}{\mathbb N}
\newcommand{\R}{\mathbb R}
\newcommand{\Z}{\mathbb Z}
\def\C{\mathbb C}
\newcommand{\D}{\mathop{}\!\mathrm{d}}
\newcommand{\dd}{\mathfrak d}
\newcommand{\lm}{\mathcal L}
\newcommand{\hdm}{\mathcal H}
\newcommand{\cst}{\mathfrak c}
\newcommand{\cld}{\mathcal C}
\newcommand{\dmn}{\mathfrak F}
\newcommand{\dmng}{\mathfrak G}
\begin{document}
	
\title[The difference between the Hurwitz continued fraction expansions]{The difference between the Hurwitz continued fraction expansions of a complex number and its rational approximations}

\author{YuBin He}

\address{Department of Mathematics, South China University of Technology,	Guangzhou, 510641, P.~R.\ China}

\email{yubinhe007@gmail.com}

\author[Ying Xiong]{Ying Xiong$^*$}

\address{Department of Mathematics, South China University of Technology,	Guangzhou, 510641, P.~R.\ China}

\email{xiongyng@gmail.com}

\subjclass[2000]{28A80}

\keywords{Hurwitz continued fraction, partial quotients, $\psi$-approximation, Jarn\'ik-type Theorem, Hausdorff dimension, packing dimension.}

\thanks{$^*$Corresponding author.}

\thanks{This work is supported by National Natural Science Foundation of China (Grant No.~11871227, 11771153, 11471124), Guangdong Natural Science Foundation (Grant No.~2018B0303110005), Guangdong Basic and Applied Basic Research Foundation (Project No. 2021A1515010056), the Fundamental Research Funds for the Central Universities, SCUT (Grant No.~2020ZYGXZR041).}

\begin{abstract}
	For regular continued fraction, if a real number~$x$ and its rational approximation~$p/q$ satisfying $|x-p/q|<1/q^2$, then, after deleting the last integer of the partial quotients of~$p/q$, the sequence of the remaining partial quotients is a prefix of that of~$x$. In this paper, we show that the situation is completely different if we consider the Hurwitz continued fraction expansions of a complex number and its rational approximations.
	More specifically, we consider the set $E(\psi)$ of complex numbers which are well approximated with the given bound $\psi$ and have quite different Hurwitz continued fraction expansions from that of their rational approximations. The Hausdorff and packing dimensions of such set are determined. It turns out that its packing dimension is always full for any given approximation bound $\psi$ and its Hausdorff dimension is equal to that of the $\psi$-approximable set $W(\psi)$ of complex numbers. As a consequence, we also obtain an analogue of the classical Jarn\'ik Theorem in real case.
\end{abstract}

\maketitle

\section{Introduction}

The theory of continued fractions is important for studying the problem of approximating real numbers by rational fractions. Given $x\in(0,1)$, we can write it as the expression
\begin{equation}\label{eq:CF}
	x=\cfrac{1}{a_1+\cfrac{1}{a_2+\cfrac{1}{a_3+\cdots}}}=:[0;a_1,a_2,a_3,\dotsc],
\end{equation}
where $(a_n)$, called the partial quotients of \emph{regular continued fraction} (RCF) of~$x$, is given by $a_n=\lfloor1/G^{n-1}(x)\rfloor$. Here $G(x)=1/x-\lfloor1/x\rfloor$ is the Gauss map and $\lfloor x\rfloor$ is the greatest integer function which stands for the greatest integer less than or equal to~$x$. If $x\in\Q$, then we obtain $G^N(x)=0$ for some~$N\ge1$ and the partial quotients of~$x$ are $(a_1,\dots,a_N)$. Otherwise, $(a_n)$ is an infinite sequence. We remark that by this definition, the continued fraction of a rational number could not be of form $ [0;a_1,\dots,a_{N-1},1] $, i.e., ending with 1, since we will have $ G^{N-1}(x)=0 $.

In 1887, A.~Hurwitz~\cite{Hurwi87} considered the analogous problem of approximating complex numbers by ratios of Gauss integers. For this, the \emph{Hurwitz continued fraction} (HCF) was introduced in a similar way, with the nearest integer function $[z]$ instead of the greatest integer function $\lfloor x\rfloor$. 

The nearest Gaussian integer of a complex number~$z$, denoted by~$[z]$, is determined by
\[ z-[z]\in\dmn:=\{x+iy\colon x,y\in[-1/2,1/2)\}. \]
Write $\dmn^*=\dmn\setminus\{0\}$. For $z\in\dmn^*$, define $a_n=[1/T^{n-1}(z)]$, where $T(z)=1/z-[1/z]$. The sequence $(a_n)$ is called the partial quotients of Hurwitz continued fraction, since, for $z\in\dmn^*$, it leads to the same expansion as~\eqref{eq:CF}:
\[ z=[0;a_1,a_2,a_3,\dotsc]. \]
If $z\in\Q(i)$, then we obtain $T^N(z)=0$ for some~$N\ge1$ and the partial quotients of~$z$ are $(a_1,\dots,a_N)$. Otherwise, $(a_n)$ is an infinite sequence. Moreover, since $[1/z]\notin\{0,\pm1,\pm i\}$ for $z\in\dmn^*$, we have
\begin{equation}\label{eq:an}
	a_n\in\Z[i]\setminus\{0,\pm1,\pm i\}=\bigl\{a\in\Z[i]\colon|a|\ge\sqrt2\bigr\}=:I.
\end{equation}
We write $a_n(z)$ instead of $a_n$ when it is necessary to emphasis that $(a_n)$ is obtained from $z$. 

These two continued fractions have many similar properties related to the Diophantine approximation (see for instance~\cite{Hensl06}). It is also interesting to find the difference between them. In this paper, we discuss a property that holds for the RCF but does not hold for the HCF.

Let us consider an irrational number $x\in(0,1)$ and its rational approximation $p/q$ such that $|x-p/q|<1/q^2$. Let $(a_n)_{n\ge1}$ and $(b_1,\dots,b_N)$ be the RCF partial quotients of~$x$ and $p/q$, respectively. Then we must have $a_k=b_k$ for $1\le k\le N-1$ (see Proposition~\ref{p:RCF}). In other word, if we delete the last integer of the RCF partial quotients of~$p/q$, then the remaining partial quotients is a prefix of that of~$x$.

However, the HCF partial quotients of a complex number and its rational approximations may be quite different. For example, let $z=\frac{2i}{3-\sqrt{10}+7i}$ and $p/q=\frac{37+6i}{129+4i}$. Then $|z-p/q|\approx0.000029<0.000058\approx1/|q|^2$, but the HCF partial quotients of~$z$ and~$p/q$ are  $(4,\overline{-2,3i,2,3i})$ and $(3,2,3i,-2,3i)$, respectively, which are quite different. Here $\overline {a_1,\dots,a_k}$ denotes the periodic sequence $(a_1,\dots,a_k,\dots,a_1,\dots,a_k,\dots)$.

This is not accidental. In fact, we shall prove a general result which asserts that even if $z$ and $p/q$ are much more close than $1/|q|^2$, one can still find a lot of such examples. 

To be more precise, for $z\notin\Q(i)$ with the HCF partial quotients $(a_n)_{n\ge1}$ and $p/q\in\Q(i)$ with the HCF partial quotients $(b_1,\dots,b_N)$, we use the cardinal number
\begin{equation}\label{eq:dd}
	\dd(z,p/q)=\sharp\{n\colon a_n\ne b_n, 1\le n\le N\}
\end{equation}
to quantify the difference between the partial quotients of~$z$ and~$p/q$. More generally, we use $\psi(|q|)$ instead of $1/|q|^2$ to bound the distance of $z$ and $p/q$, where $\psi\colon(0,\infty)\to(0,\infty)$ is a nonincreasing function with $\lim_{x\to+\infty}\psi(x)=0$. Let $E(\psi)$ be the set of points $z\in\dmn^*$ such that there exist infinitely many $p^{(n)}/q^{(n)}\in\Q(i)$ satisfying $|z-p^{(n)}/q^{(n)}|\le\psi(|q^{(n)}|)$ and $\dd(z,p^{(n)}/q^{(n)})\to+\infty$ as $n\to\infty$. Our first result gives the Hausdorff and packing dimensions of~$E(\psi)$.
\begin{thm}\label{t:result}
	We have
	\[ \hdim E(\psi)=\min\bigl(4/\lambda(\psi),2\bigr) \quad\text{and}\quad
	\pdim E(\psi)=2. \]
	Here $\lambda(\psi)=\liminf_{x\to\infty}(-\log\psi(x))/\log x$. 
\end{thm}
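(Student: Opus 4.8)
The plan is to prove the Hausdorff dimension upper bound, the Hausdorff dimension lower bound, and the packing dimension statement separately, with the lower-bound constructions being the heart of the matter.

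First, for the upper bound $\hdim E(\psi)\le\min(4/\lambda(\psi),2)$, I would observe that $E(\psi)$ is contained in the classical $\psi$-approximable set $W(\psi)$ of complex numbers that admit infinitely many $p/q\in\Q(i)$ with $|z-p/q|\le\psi(|q|)$; indeed the defining condition of $E(\psi)$ is strictly stronger than membership in $W(\psi)$. The Hausdorff dimension of $W(\psi)$ is governed by the exponent $\lambda(\psi)$ via a complex (two-dimensional) analogue of the Jarn\'ik--Besicovitch theorem: a standard covering argument using the balls $B(p/q,\psi(|q|))$, together with counting the number of $q\in\Z[i]$ with $|q|\sim Q$ (which is $\asymp Q^2$) and the number of admissible numerators $p$ (again $\asymp Q^2$), yields $\hdim W(\psi)\le 4/\lambda(\psi)$; the bound $\le 2$ is trivial since $E(\psi)\subset\dmn\subset\C$. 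So the upper bound for $\hdim E(\psi)$ is essentially inherited from the ambient approximation problem and requires no new ideas about HCF combinatorics.

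The substance is the lower bound $\hdim E(\psi)\ge\min(4/\lambda(\psi),2)$ and the equality $\pdim E(\psi)=2$. For both I would build a Cantor-type subset of $E(\psi)$ by prescribing HCF partial quotients along a carefully chosen sparse subsequence of indices. The key mechanism, already illustrated by the worked example with $z=2i/(3-\sqrt{10}+7i)$, is that one can take a rational $p/q$ whose HCF expansion agrees with a long prefix of $z$ and then, near the tail, insert a short block of partial quotients that forces a \emph{different} normalization of the nearest-integer map while barely moving the point: the rational $p/q$ stays within $\psi(|q|)$ of $z$ precisely because its denominator $|q|$ is controlled by the length of the common prefix, while $\dd(z,p/q)$ picks up a bounded number of mismatches \emph{per insertion}. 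Iterating this along infinitely many scales, with the gaps between insertions tending to infinity, makes $\dd(z,p^{(n)}/q^{(n)})\to\infty$ while keeping $|z-p^{(n)}/q^{(n)}|\le\psi(|q^{(n)}|)$; hence the limiting point lies in $E(\psi)$. To get the dimension, the free partial quotients between insertions should be allowed to range over a large alphabet $\{a\in\Z[i]\colon |a|\asymp M\}$ (size $\asymp M^2$), and one estimates the contraction ratios of the associated cylinder sets via the standard HCF geometry (each admissible cylinder of depth $n$ has diameter comparable to $|q_n|^{-2}$, with $|q_n|$ a product of the $|a_k|$'s up to bounded multiplicative error). A mass-distribution / Billingsley-type argument on this Cantor set then gives Hausdorff dimension approaching $4/\lambda(\psi)$ by tuning $M$ and the growth rate of the free quotients against $\lambda(\psi)$; capping at $2$ handles the case $\lambda(\psi)\le 2$, where one simply takes $\psi$-sized neighbourhoods and a full-dimensional construction. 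For the packing dimension, the point is that $\pdim$ only sees the \emph{upper} box dimension along a subsequence of scales: one arranges the Cantor set so that at infinitely many depths the surviving cylinders are extremely numerous and nearly space-filling (choosing the free alphabet there as large as the approximation constraint permits at \emph{that} scale, independent of $\lambda(\psi)$), which forces $\pdim E(\psi)=2$ regardless of how restrictive $\psi$ is. This exploits the asymmetry between Hausdorff dimension (worst scale) and packing dimension (best scale).

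The main obstacle I anticipate is the HCF-specific combinatorics controlling the insertion step: unlike RCF, the nearest-integer algorithm has the admissibility restriction $a_n\in I$ (equation~\eqref{eq:an}) plus further forbidden finite blocks coming from the geometry of the regions $T$ maps cylinders onto, so one must verify that the inserted blocks (i) are admissible, (ii) actually produce $T^N(p/q)=0$ so that $p/q\in\Q(i)$ with the intended expansion, and (iii) genuinely change the partial quotient at the relevant index rather than being silently "absorbed" by the algorithm. Establishing a clean lemma of the form "given any admissible prefix $(a_1,\dots,a_m)$ there is an admissible continuation of bounded length ending the expansion, whose denominator is $\asymp\prod|a_k|$ and which differs from the natural continuation of $z$ in a controlled number of places" is the crux; once that is in hand, the dimension bookkeeping is routine. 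A secondary technical point is ensuring the two-sided control $|z-p/q|\le\psi(|q|)$ rather than merely $\asymp$, which one gets by choosing the last free quotient before an insertion large enough to absorb the implied constants.
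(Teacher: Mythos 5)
Your upper-bound argument ($E(\psi)\subset W(\psi)$ plus a covering estimate for $W(\psi)$) matches the paper, and the Cantor-set / mass-distribution framework for the lower bound is the right shape; but the mechanism you propose for making $\dd(z,p^{(n)}/q^{(n)})\to\infty$ has a genuine gap. You envisage inserting boundary blocks of \emph{bounded} length at infinitely many scales, each contributing a bounded number of mismatches, and you claim these accumulate. They do not. For $|z-p^{(n)}/q^{(n)}|\le\psi(|q^{(n)}|)$ to hold with $\lambda(\psi)>2$, the rational must essentially be a convergent $p_{N_n}/q_{N_n}=[0;a_1,\dots,a_{N_n}]$ of $z$; its actual HCF is obtained by running the algorithm on $p_{N_n}/q_{N_n}$, and this agrees with $(a_1,\dots,a_{N_n})$ on any prefix whose cylinder contains $p_{N_n}/q_{N_n}$ in its interior. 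The boundary phenomenon you invoke to create a mismatch at scale $N_n$ does not retroactively alter the earlier part of the HCF of $p_{N_n}/q_{N_n}$: the earlier blocks put the earlier rationals $p_{N_k}/q_{N_k}$ (with $k<n$) on cylinder boundaries, but $p_{N_n}/q_{N_n}$ itself typically lies in the interior of those earlier cylinders and so inherits $a_1,\dots,a_{N_{n-1}}$ unchanged. Thus each $p^{(n)}$ sees only the mismatches at scale $N_n$, and if these are bounded per insertion then $\dd(z,p^{(n)}/q^{(n)})$ stays bounded. Your postulated crux lemma -- a continuation ``of bounded length'' differing from the natural expansion ``in a controlled number of places'' -- therefore cannot yield $\dd\to\infty$.

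What is actually needed, and what the paper supplies, is a family of regular blocks $\bm v_k$ of \emph{unbounded} length together with an alternate family $\tilde{\bm v}_k$ such that $[0;\bm v_k]=[0;\tilde{\bm v}_k]$ (so the rational $p(\bm a\bm v_k)/q(\bm a\bm v_k)$ lies on a cylinder boundary and its true HCF expansion is $\bm a\tilde{\bm v}_k$) while $\bm v_k$ and $\tilde{\bm v}_k$ disagree in $3k+2$ places. The growing number of mismatches thus lives inside a single long insertion at a single late scale, not accumulated across scales; you also want the algorithm to ``absorb'' the whole block and reroute it, which is the opposite of your item~(iii). Establishing the identity $[0;\bm v_k]=[0;\tilde{\bm v}_k]$ together with the precise mismatch count is the algebraic crux (Lemma~\ref{l:vk} and Lemma~\ref{l:uvb} of the paper), and it has no analogue in your sketch; without it the dimension bookkeeping, which is otherwise in the right spirit, never lands inside $E(\psi)$.
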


The lower order $\lambda(\psi)$ appears naturally in Dodson's works~\cite{Dodso91,Dodso92} on the Hausdorff dimension of $\psi$-approximable sets, which are well studied for the cases of real numbers and linear forms, see for instance~\cite{Besic34,Dicki97,Dodso91,Dodso92,Jarn29,Jarn31,KimLi19,WanWu17}. 

To make clear the meaning of the dimensions result in Theorem~\ref{t:result}, we consider the $\psi$-approximable set of complex numbers
\begin{equation}\label{eq:wpsi}
	W(\psi):=\bigl\{z\in\dmn^*\colon|z-p/q|\le\psi(|q|)\ \text{for infinitely many $p,q\in\Z[i]$}\bigr\}.
\end{equation}
Our second result says that the Hausdorff and packing dimensions of~$W(\psi)$ are equal to that of~$E(\psi)$. This is an analogue of the classical Jarn\'ik Theorem in real case.
\begin{thm}\label{t:Wpsi}
	$\hdim W(\psi)=\min\bigl(4/\lambda(\psi),2\bigr)$ and $\pdim W(\psi)=2$.
\end{thm}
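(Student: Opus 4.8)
My plan is to derive Theorem~\ref{t:Wpsi} from Theorem~\ref{t:result}; the only extra inputs needed are the elementary inclusion $E(\psi)\subseteq W(\psi)$ for the lower bounds and a routine covering estimate for the upper bound on the Hausdorff dimension. For the inclusion, suppose $z\in E(\psi)$, so that there is a sequence $(p^{(n)}/q^{(n)})_{n\ge1}$ in $\Q(i)$ with $|z-p^{(n)}/q^{(n)}|\le\psi(|q^{(n)}|)$ and $\dd(z,p^{(n)}/q^{(n)})\to\infty$. Since $\dd(z,r)$ is finite for every fixed $r\in\Q(i)$, the divergence $\dd\to\infty$ forces these rationals to take infinitely many distinct values; rewriting each in lowest terms, which does not increase $|q^{(n)}|$ and hence (as $\psi$ is nonincreasing) preserves the inequality, we obtain infinitely many distinct pairs $(p,q)$ of Gaussian integers with $|z-p/q|\le\psi(|q|)$, that is, $z\in W(\psi)$. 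Hence, by Theorem~\ref{t:result}, $\hdim W(\psi)\ge\hdim E(\psi)=\min(4/\lambda(\psi),2)$ and $\pdim W(\psi)\ge\pdim E(\psi)=2$.

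For the matching upper bounds, I would first note that $W(\psi)\subseteq\dmn$ is bounded, so $\hdim W(\psi)\le\pdim W(\psi)\le2$; this already settles everything when $\lambda(\psi)\le2$, so assume $\lambda(\psi)>2$ and aim to prove $\hdim W(\psi)\le 4/\lambda(\psi)$ via the natural cover. Being a $\limsup$ set, $W(\psi)$ is contained, for every $N$, in $\bigcup_{|q|\ge N}\bigcup_p B(p/q,\psi(|q|))$, where for each $q$ one need only retain the $O(|q|^2)$ values of $p$ with $p/q$ within distance $1$ of the bounded set $\dmn$. Fix $s$ with $4/\lambda(\psi)<s<2$ and $\varepsilon>0$ with $s(\lambda(\psi)-\varepsilon)>4$. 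Grouping the denominators into dyadic blocks $2^k\le|q|<2^{k+1}$ and bounding $\psi(|q|)$ by $\psi(2^k)$ on each block (monotonicity of $\psi$), the $s$-dimensional sum of this cover is $O\bigl(\sum_{k\ge\log_2 N}2^{4k}\psi(2^k)^s\bigr)$; by the definition of $\lambda(\psi)$ one has $\psi(2^k)\le 2^{-k(\lambda(\psi)-\varepsilon)}$ for all large $k$, so each summand is $O\bigl(2^{k(4-s(\lambda(\psi)-\varepsilon))}\bigr)$, the series converges, and its tail (together with the mesh of the cover) tends to $0$ as $N\to\infty$. Thus $\hdm^s(W(\psi))=0$, so $\hdim W(\psi)\le s$, and letting $s\downarrow 4/\lambda(\psi)$ gives $\hdim W(\psi)\le 4/\lambda(\psi)$.

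Combining the two bounds yields $\hdim W(\psi)=\min(4/\lambda(\psi),2)$ and $\pdim W(\psi)=2$. I do not foresee a genuine obstacle here: all of the arithmetic and combinatorial work — in particular the construction of many complex numbers whose HCF expansions diverge from those of their good rational approximations — is already carried out in the proof of Theorem~\ref{t:result}, and the present argument merely repackages it through $E(\psi)\subseteq W(\psi)$. The only point that requires a little care is the $\liminf$ defining $\lambda(\psi)$, which does not control $\psi$ at every scale; this is dealt with, as above, by testing convergence of the covering sum only along the scales $x=2^k$ and using that $\psi$ is nonincreasing to dominate $\psi(|q|)$ throughout each dyadic block of denominators.
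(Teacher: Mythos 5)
Your proposal is correct and matches the paper's approach: the paper also obtains the lower bounds for $W(\psi)$ by combining the inclusion $E(\psi)\subset W(\psi)$ with the lower bounds established for $E(\psi)$, and the upper bound for $\hdim W(\psi)$ by a direct covering argument over denominators. The only cosmetic difference is that the paper organizes the covering sum by the level sets $\|q\|=k$ (giving the exact count $8k(2k+1)^2$) rather than by dyadic blocks, but the two bookkeeping schemes are equivalent.
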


Obviously, we have $E(\psi)\subset W(\psi)$. Therefore, Theorem~\ref{t:result} and~\ref{t:Wpsi} imply that the Hausdorff and packing dimensions of~$E(\psi)$ both attain their largest possible value. This means that, in terms of the Hausdorff or packing dimension, it is a common phenomenon that a complex number and its rational approximations have quite different HCF partial quotients, though the situation is completely different if we consider the RCF partial quotients for a real number and its rational approximations.

This paper is organized as follows. Section~\ref{sec:pre} introduces some preliminaries. More specifically, in Section~\ref{ss:RCF}, we discuss the relation between the RCF partial quotients of an irrational number and its rational approximations; in Section~\ref{ss:HCF}, we introduce some standard facts of HCF; in Section~\ref{ss:BRC}, we present two lemmas which are used to find complex irrational numbers having quite different HCF partial quotients compared with their rational approximations; in Section~\ref{ss:AF}, we give some useful lemmas which are needed to obtain the lower bounds of dimensions of~$E(\psi)$. Section~\ref{sec:proof} is devoted to the proofs of Theorems~\ref{t:result} and~\ref{t:Wpsi}.

\section{Preliminaries} \label{sec:pre}

\subsection*{Notation}
Throughout this paper, we adhere to the following notation. Sequences of numbers will be denoted by letters in boldface: $\bm a,\bm b,\dotsc$; for $\bm a=(a_1,\dots,a_n)$ and $\bm b=(b_1,\dots,b_m)$, write $\bm a^-=(a_1,\dots,a_{n-1})$ and $\bm{ab}=(a_1,\dots,a_n,b_1,\dots,b_m)$; the empty word and empty set will be denoted by~$\varnothing$ and~$\emptyset$, respectively; the open and closed ball with center~$z$ and radius~$r$ will be denoted by~$B(z,r)$ and $\overline B(z,r)$, respectively; the conjugation, real and imaginary part of a complex number~$z$ will be denoted by~$\overline z$, $\Re z$ and~$\Im z$, respectively; the interior, closure, diameter, cardinal number and Lebesgue measure of~$A$ will be denoted by~$A^\circ$, $\overline A$, $|A|$, $\sharp A$ and~$\lm(A)$, respectively. For two variables~$\alpha$ and~$\beta$, the notation $\alpha\asymp\beta$ means that $c^{-1}\alpha\le\beta\le c\alpha$ for some constant~$c\ge1$.

Define $p(\varnothing)=0$ and $q(\varnothing)=1$ for the empty word $\varnothing$. Given a sequence $(a_1,\dots,a_n)$ of numbers with $n\ge1$, define
\begin{equation}\label{eq:Qpair}
	\begin{pmatrix}
		p(a_1,\dots,a_n) & p(a_1,\dots,a_{n-1}) \\ 
		q(a_1,\dots,a_n) & q(a_1,\dots,a_{n-1})
	\end{pmatrix}
	=\begin{pmatrix}
		0 & 1 \\ 
		1 & 0
	\end{pmatrix}\cdot\prod_{j=1}^{n}
	\begin{pmatrix}
		a_j & 1 \\ 
		1 & 0
	\end{pmatrix}.
\end{equation}
Here we regard $(a_1,\dots,a_{n-1})$ as the empty word~$\varnothing$ when $n=1$. For~$z\in\dmn^*$ (or $x\in(0,1)$) with the HCF (or RCF) partial quotients $(a_n)$, let $p_n=p(a_1,\dots,a_n)$ and $q_n=q(a_1,\dots,a_n)$ with the convention $p_0=p(\varnothing)=0$ and $q_0=q(\varnothing)=1$. By~\eqref{eq:Qpair}, we have the recursive formulae:
\[p_n=a_np_{n-1}+p_{n-2}\quad\textrm{and}\quad q_n=a_nq_{n-1}+q_{n-2},\qquad \text{for}\ n\ge 2.\]
Moreover, it holds that ${p_n}/{q_n}=[0;a_1,a_2,\dots,a_n]$. We call the sequences $(p_n)$ and $(q_n)$ the \emph{$\mathcal Q$-pair} of~$z$ (or~$x$).

\subsection{The RCF partial quotients} \label{ss:RCF}

This subsection is devoted to the relationship between the RCF partial quotients of an irrational number and its rational approximations.
\begin{prop}\label{p:RCF}
	For $x\in(0,1)\setminus\Q$ and $p,q\in\N$ with $|x-p/q|<1/q^2$, let $(a_n)_{n\ge1}$ and $(b_1,\dots,b_N)$ be the RCF partial quotients of~$x$ and~$p/q$, respectively. Then we have $a_k=b_k$ for $1\le k\le N-1$.
\end{prop}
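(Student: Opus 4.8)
The plan is to exploit the classical best-approximation property of convergents together with the uniqueness of the RCF expansion. Write $p/q$ in lowest terms; since $|x-p/q|<1/q^2$, a standard result (Legendre's theorem) says that $p/q$ is a convergent of~$x$, i.e., $p/q=p_N/q_N$ where $(p_n),(q_n)$ is the $\mathcal Q$-pair of~$x$ and $N$ is chosen so that $q_N=q$. (One must check the indexing matches: the convergent $p_N/q_N$ of~$x$ has exactly the partial quotients $(a_1,\dots,a_N)$ by the relation ${p_N}/{q_N}=[0;a_1,\dots,a_N]$ recorded after~\eqref{eq:Qpair}.) Thus $[0;a_1,\dots,a_N]=p/q=[0;b_1,\dots,b_N]$, and it remains to deduce from this equality of rationals that the two finite partial-quotient strings agree in their first $N-1$ entries.

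The key step is the near-uniqueness of finite RCF expansions: a rational in $(0,1)$ has exactly two representations as a finite continued fraction $[0;c_1,\dots,c_m]$ with $c_i\ge1$ integers and $c_m\ge1$, namely one with $c_m\ge2$ and one with last entry~$1$ (obtained via $[\dots,c_m]=[\dots,c_m-1,1]$), and these have lengths differing by~$1$. The RCF normalisation adopted in the paper (recalled in the introduction: the expansion of a rational "could not be of form $[0;a_1,\dots,a_{N-1},1]$") selects the representation \emph{not} ending in~$1$. Hence both $(a_1,\dots,a_N)$ and $(b_1,\dots,b_N)$ are the \emph{same} normalised expansion of $p/q$ — here I would note that they even have the same length~$N$ by construction, since $N$ is determined as the index with $q_N=q$ on the $x$-side and as the length of the expansion of $p/q$ on the other side, and the non-terminating-in-$1$ representation of a given length is unique. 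Therefore $a_k=b_k$ for \emph{all} $1\le k\le N$, in particular for $1\le k\le N-1$.

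An alternative, more self-contained route avoids invoking Legendre's theorem by name: set up the Gauss map dynamics directly. Let $(a_n)_{n\ge1}$ be the RCF partial quotients of~$x$ and suppose for contradiction that $a_k\ne b_k$ for some $k\le N-1$; take the least such~$k$. Then $[0;a_1,\dots,a_{k-1},y]=x$ and $[0;b_1,\dots,b_{k-1},t]=p/q$ with $y\in(0,1)$ irrational and $t=[0;b_k,\dots,b_N]\in(0,1)$ rational, and the shared prefix $(a_1,\dots,a_{k-1})=(b_1,\dots,b_{k-1})$ lets one transfer the inequality $|x-p/q|<1/q^2$ to a comparable inequality between $y$ and~$t$ using the Möbius action of $\prod_{j=1}^{k-1}\bigl(\begin{smallmatrix} a_j & 1\\ 1 & 0\end{smallmatrix}\bigr)$ and the cocycle/denominator identities following~\eqref{eq:Qpair}; one then derives that $b_k=\lfloor 1/t\rfloor$ must coincide with $a_k=\lfloor 1/y\rfloor$, a contradiction. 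I expect the main obstacle to be purely bookkeeping: pinning down the exact indexing convention so that "the $N$-th convergent of~$x$" and "the expansion of~$p/q$ of length~$N$" refer to strings of the same length, and making sure the chosen normalisation (no trailing~$1$) is used consistently on both sides so that the two-representation ambiguity is genuinely resolved; the Diophantine content itself is just Legendre's classical estimate.
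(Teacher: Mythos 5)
Your first route misapplies Legendre's theorem. Legendre's criterion requires $|x-p/q|<1/(2q^2)$, not $1/q^2$; under the weaker hypothesis $|x-p/q|<1/q^2$ the fraction $p/q$ need \emph{not} be a convergent of~$x$. This is exactly why the proposition only asserts $a_k=b_k$ for $k\le N-1$ and says nothing about $k=N$. A concrete counterexample to the conclusion you derive: take $x=\sqrt2-1=[0;2,2,2,\dots]$ and $p/q=3/7=[0;2,3]$. Then $|x-3/7|\approx0.0143<1/49\approx0.0204$, so the hypothesis holds with $N=2$, but $3/7$ is a mediant $(1+2)/(2+5)$ of two consecutive convergents, not a convergent, and $a_2=2\ne3=b_2$. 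So the chain ``$|x-p/q|<1/q^2\Rightarrow p/q$ is a convergent $\Rightarrow a_k=b_k$ for all $k\le N$'' is broken at the first arrow, and its final output is false. The relevant classical result under the weaker $1/q^2$ bound is Fatou's theorem (cited in the paper), which allows $p/q$ to be an intermediate fraction; that is precisely why the last partial quotient is excepted.

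Your second route is closer in spirit to what a correct proof must do, but the key step --- ``one then derives that $b_k=\lfloor1/t\rfloor$ must coincide with $a_k=\lfloor1/y\rfloor$'' --- is asserted, not proved, and it is the whole content of the statement. It also doesn't explain why the induction must stop at $k=N-1$ rather than $k=N$: the floors \emph{can} disagree at level $N$, so your contradiction argument would prove too much unless the estimate visibly degrades at the last step. The paper's proof avoids both problems by working at the top level directly: it observes that $p/q$ lies strictly between $p_{N-1}/q_{N-1}$ and $(p_{N-1}+p_{N-2})/(q_{N-1}+q_{N-2})$ at distance more than $1/q^2$ from each (using $b_N\ge2$, which is where the no-trailing-$1$ normalisation enters), so the $1/q^2$-neighbourhood of $p/q$ stays inside that interval, forcing $x$ to have the form $[0;b_1,\dots,b_{N-1},m,\dots]$ for some $m\ge1$ --- which gives the first $N-1$ quotients but says nothing about the $N$-th. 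You would need an argument of this quantitative, interval-based kind to close the gap.
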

\begin{proof}
We have not found the exact statement of the proposition in the literature, although there is a closely related result due to Fatou~\cite{Fatou04} (see also Grace~\cite{Grace18}). For reader's convenience, we give a direct proof here.

Write $p_k=p(b_1,\dots,b_k)$ and $q_k=q(b_1,\dots,b_k)$ for $k=N,N-1,N-2$ (recall~\eqref{eq:Qpair}). Since the function $x\mapsto(ax+b)/(cx+d)$ is monotonic on $ [0,\infty) $, one sees that
\[ \frac{p}{q}=\frac{p_N}{q_N}=\frac{b_Np_{N-1}+p_{N-2}}{b_Nq_{N-1}+q_{N-2}} \]
lies between $p_{N-1}/q_{N-1}$ and $(p_{N-1}+p_{N-2})/(q_{N-1}+q_{N-2})$. We claim that so does~$x$. Consequently, for some $m\ge1$, $x$ lies between
\[ \frac{mp_{N-1}+p_{N-2}}{mq_{N-1}+q_{N-2}}=[0;b_1,\dots,b_{N-1},m] \]
and
\[\frac{(m+1)p_{N-1}+p_{N-2}}{(m+1)q_{N-1}+q_{N-2}}=[0;b_1,\dots,b_{N-1},m+1],\]
this means that the first $N$ partial quotients of~$x$ are $(b_1,\dots,b_{N-1},m)$.

It remains to prove the claim. By the definition of the continued fraction of a rational number, its partial quotients could not be end with $ 1 $, we must have $ b_N\ge 2 $. Hence,
\[ q=q_N=b_Nq_{N-1}+q_{N-2}>q_{N-1}+q_{N-2}>q_{N-1}. \]
Therefore,
\[ \min\left(\left|\frac{p}{q}-\frac{p_{N-1}}{q_{N-1}}\right|,\left|\frac{p}{q}-\frac{p_{N-1}+p_{N-2}}{q_{N-1}+q_{N-2}}\right|\right)>1/q^2. \]
This together with the condition $|x-p/q|<1/q^2$ gives the claim.
\end{proof}

\subsection{Basic properties of HCF and regular cylinders} \label{ss:HCF}

This subsection presents some standard facts of HCF. We think all of these results are known, however fail to find references for some of them, so we include proofs for the reader's convenience.

\begin{lem}\label{l:props}
	Let $z\in\dmn^*$ and denote its HCF partial quotients by $(a_n)$. Let $(p_n)$ and $(q_n)$ be the $\mathcal Q$-pair of~$z$. For all $n\ge1$, the following statements hold.
	\begin{enumerate}[\upshape(a)]
		\item \label{le:mf} The mirror formula: $q_{n-1}/q_n=[0;a_n,a_{n-1},\dots,a_1]$. 
		\item \label{le:pq-pq} $q_np_{n-1}-q_{n-1}p_n=(-1)^n$. 
		\item \label{le:z-pq} $|z-p_n/q_n|=\bigl|q_n^2(a_{n+1}+T^{n+1}(z)+q_{n-1}/q_n)\bigr|^{-1}\le|q_n|^{-2}$. 
		\item \label{le:1<q} $1=|q_0|<|q_1|<|q_2|<\dotsb$.
		\item \label{le:q>q} $|q_{n+k}|\ge\phi^{\lfloor n/2\rfloor}|q_k|$ for $k\ge0$. In particular, $|q_n|\ge\phi^{\lfloor n/2\rfloor}$. Here $\phi=\frac{\sqrt5+1}{2}$ and $\lfloor x\rfloor$ denotes the greatest integer $\le x$. 
		\item \label{le:|q-|} Let $\bm a=(a_1,\dots,a_n)$, then $(|a_n|-1)|q(\bm a^-)|<|q(\bm a)|<(|a_n|+1)|q(\bm a^-)|$.
		\item \label{le:|qq|} Let $\bm a=(a_1,\dots,a_n)$, $\bm b=(a_{n+1},\dots,a_{n+k})$ with $k\ge1$. Then 
		\[ |q(\bm a)q(\bm b)|/5<|q(\bm{ab})|< 3|q(\bm a)q(\bm b)|. \]
	\end{enumerate}
\end{lem}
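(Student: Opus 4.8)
\emph{Strategy.} The seven items split into routine linear algebra --- parts~(a), (b) and the equality in~(c) --- and metric estimates --- the inequality in~(c) and (d)--(g) --- which all ultimately reduce to a single geometric fact about the Hurwitz map $T$. That geometric fact is the real obstacle; everything else is bookkeeping around it.

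\emph{The algebra.} Put $M=\prod_{j=1}^{n}\bigl(\begin{smallmatrix}a_j&1\\1&0\end{smallmatrix}\bigr)$. By~\eqref{eq:Qpair}, $\bigl(\begin{smallmatrix}p_n&p_{n-1}\\q_n&q_{n-1}\end{smallmatrix}\bigr)=\bigl(\begin{smallmatrix}0&1\\1&0\end{smallmatrix}\bigr)M$, so $M=\bigl(\begin{smallmatrix}q_n&q_{n-1}\\p_n&p_{n-1}\end{smallmatrix}\bigr)$. Taking determinants (each factor, and $\bigl(\begin{smallmatrix}0&1\\1&0\end{smallmatrix}\bigr)$, has determinant $-1$) gives $p_nq_{n-1}-p_{n-1}q_n=(-1)^{n+1}$, which is~(b). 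Since every factor of $M$ --- and $\bigl(\begin{smallmatrix}0&1\\1&0\end{smallmatrix}\bigr)$ --- is symmetric, $M^{\mathsf T}$ is the same product with the factors in reverse order, hence by~\eqref{eq:Qpair} applied to the word $(a_n,\dots,a_1)$ its entries are the $\mathcal Q$-pair data of that reversed word; comparing with $M^{\mathsf T}=\bigl(\begin{smallmatrix}q_n&p_n\\q_{n-1}&p_{n-1}\end{smallmatrix}\bigr)$ yields $q_{n-1}/q_n=[0;a_n,\dots,a_1]$, which is~(a). For the identity in~(c), write $z=[0;a_1,\dots,a_{n-1},a_n+T^{n}(z)]$; the matrix relation gives $z=\dfrac{p_{n-1}T^{n}(z)+p_n}{q_{n-1}T^{n}(z)+q_n}$, and subtracting $p_n/q_n$ and using~(b) gives $z-p_n/q_n=\dfrac{(-1)^nT^{n}(z)}{q_n(q_{n-1}T^{n}(z)+q_n)}$; substituting $1/T^{n}(z)=a_{n+1}+T^{n+1}(z)$ and rearranging produces the displayed formula.

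\emph{The crux.} By that identity the bound $|z-p_n/q_n|\le|q_n|^{-2}$ in~(c) is equivalent to $\bigl|1/T^{n}(z)+q_{n-1}/q_n\bigr|\ge1$, and part~(d) (strict growth of $|q_n|$) is the assertion $|a_n+q_{n-2}/q_{n-1}|>1$. I would prove these two together by induction on $n$, carrying as invariant that $\zeta_n:=q_{n-1}/q_n=[0;a_n,\dots,a_1]$ lies in a suitable region $\mathcal R\subseteq\overline B(0,1)$: one checks that (i)~$\zeta_1=1/a_1\in\mathcal R$ for every $a_1\in I$ (clear, since $\Re(1/a),\Im(1/a)\in[-\tfrac12,\tfrac12]$ for $a\in I$); (ii)~for $\zeta\in\mathcal R$ and any digit $a\in I$ allowed to follow the prefix $(a_1,\dots,a_{n-1})$, one has $1/(a+\zeta)\in\mathcal R$ and $\dist(\zeta,-I)>1$, the latter giving~(d); and (iii)~for $\zeta\in\mathcal R$ and any $w\in\dmn^*$, $\bigl|1/w+\zeta\bigr|\ge1$, giving~(c). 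Identifying the correct $\mathcal R$ and verifying (ii)--(iii) is the main difficulty, and it genuinely requires the combinatorial description of which words arise as Hurwitz expansions of points of $\dmn^*$ --- the forbidden patterns of consecutive partial quotients: crude modulus bounds alone ($|a|\ge\sqrt2$, $|\zeta|<1$, $|w|\le1/\sqrt2$) do not suffice, since one can exhibit pairs $(a_n,a_{n+1})$ that would break the estimates but are in fact inadmissible.

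\emph{The bookkeeping.} Granting (c) and (d): for~(e), combine $q_{n+2}=(a_{n+2}a_{n+1}+1)q_n+a_{n+2}q_{n-1}$ with~(d) to get a two-step bound $|q_{n+2}|\ge\phi|q_n|$ --- immediate from $|q_{n+2}|\ge|a_{n+2}||q_{n+1}|-|q_n|$ and~(d) when $|a_{n+2}|$ is large, and from~(d) together with the admissible shapes of $(a_{n+1},a_{n+2})$ when $|a_{n+2}|=\sqrt2$ --- and then iterate to obtain $|q_{n+k}|\ge\phi^{\lfloor n/2\rfloor}|q_k|$ and, with $k=0$, $|q_n|\ge\phi^{\lfloor n/2\rfloor}$. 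Part~(f) is immediate from $q(\bm a)=a_nq(\bm a^-)+q(a_1,\dots,a_{n-2})$, the triangle inequality, and $|q(a_1,\dots,a_{n-2})|<|q(\bm a^-)|$ (from~(d)). For~(g), the matrix product gives the concatenation identity $q(\bm{ab})=q(\bm a)q(\bm b)+q(\bm a^-)p(\bm b)=q(\bm a)q(\bm b)\,(1+\zeta\eta)$ with $\zeta=q(\bm a^-)/q(\bm a)$ and $\eta=p(\bm b)/q(\bm b)$; here $|\zeta|<1$ by~(d), and applying~(c) to $T^{n}(z)$ (whose first partial quotients are $\bm b$) gives $|\eta|\le|T^{n}(z)|+|q(\bm b)|^{-2}<1/\sqrt2+1/2<2$, whence $|q(\bm{ab})|<3|q(\bm a)q(\bm b)|$. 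The lower bound $|q(\bm{ab})|>|q(\bm a)q(\bm b)|/5$ follows once $1+\zeta\eta$ is kept away from $0$: when $|\zeta|$ or $|\eta|$ is not too large this is a one-line estimate, and in the remaining range it is once more a short admissibility argument showing $\zeta$ and $\eta$ cannot be close to negative reciprocals of one another.
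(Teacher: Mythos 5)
Your handling of parts~(a), (b) and the identity in~(c) is correct and standard (the symmetry argument for the mirror formula, the determinant for~(b), and the M\"obius substitution for~(c)). Part~(f) is also fine modulo a trivial $n=1$ check.

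The genuine gap is in parts~(c)-bound, (d) and~(e), which are the real content of the lemma. You correctly observe that crude modulus bounds ($|a|\ge\sqrt2$, $|\zeta_n|<1$, $|T^{n}(z)|\le1/\sqrt2$) only give $|a_{n+1}+T^{n+1}(z)+q_{n-1}/q_n|\ge\sqrt2-1$ and $|a_n+q_{n-2}/q_{n-1}|\ge\sqrt2-1$, neither of which reaches~$1$, and that the forbidden-pattern combinatorics of Hurwitz expansions must enter. But at that point the proposal stops: the region $\mathcal R$ is never identified, the invariants (ii)--(iii) are never verified, and the case analysis for~(e) when $|a_{n+2}|=\sqrt2$ is never carried out. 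What you have written is an accurate diagnosis of what a self-contained proof would require, not a proof. The paper itself does not supply this argument either --- it cites Lakein for~(c), Hurwitz for~(d), and Dani--Nogueira (Corollary~5.3) for~(e) --- so the expected route here is to quote those sources rather than re-derive the admissibility theory.

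One further remark on~(g): the lower bound does \emph{not} need an admissibility argument, contrary to your closing sentence. The paper's proof simply splits on $|q(\bm b)|$. If $|q(\bm b)|>4$ then $|\eta|\le\sqrt2/2+|q(\bm b)|^{-2}<4/5$, hence $|1+\zeta\eta|>1/5$ directly. If $|q(\bm b)|\le4$, then $|q(\bm a)q(\bm b)|/5\le 4|q(\bm a)|/5<|q(\bm a)|<|q(\bm{ab})|$ by~(d) alone. So that final step of your sketch invokes machinery that the statement does not actually require.
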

\begin{proof}
	For~\eqref{le:mf}, see~\cite[page~76]{Hensl06}; for~\eqref{le:pq-pq}, see~\cite[page~73]{Hensl06}; for~\eqref{le:z-pq}, see~\cite[Theorem~1]{Lakei73}; for~\eqref{le:1<q}, see~\cite{Hurwi87}; for~\eqref{le:q>q}, see~\cite[Corollary~5.3]{DanNo14}.
	
	(f) If $n=1$, then $q(\bm a)=a_1$ and $q(\bm a^-)=1$, so the conclusion holds. Suppose $n\ge2$. By~\eqref{eq:Qpair}, $q(\bm a)=a_nq(\bm a^-)+q(a_1,\dots,a_{n-2})$. Since $|q(a_1,\dots,a_{n-2})|<|q(\bm a^-)|$, the conclusion follows.
	
	(g) It follows from~\eqref{eq:Qpair} that
	\[ q(\bm{ab})=q(\bm a)q(\bm b)+q(\bm a^-)p(\bm b). \]
	By~\eqref{le:1<q}, $|q(\bm a^-)|<|q(\bm a)|$. So
	\begin{equation}\label{eq:q/qq}
		1-\left|\frac{p(\bm b)}{q(\bm b)}\right|<\frac{|q(\bm{ab})|}{|q(\bm a)q(\bm b)|}<1+\left|\frac{p(\bm b)}{q(\bm b)}\right|.
	\end{equation}
	We now turn to bound $|p(\bm b)/q(\bm b)|$. Note that $(a_{n+k})_{k}$ is the HCF of~$T^n(z)\in\dmn$. Conclusion~\eqref{le:z-pq} gives $|T^n(z)-p(\bm b)/q(\bm b)|\le|q(\bm b)|^{-2}$, and so
	\[ \left|\frac{p(\bm b)}{q(\bm b)}\right|\le|T^n(z)|+\frac{1}{|q(\bm b)|^2}\le\frac{\sqrt2}{2}+\frac{1}{|q(\bm b)|^2}<2, \]
	since $|q(\bm b)|>1$ (by~\eqref{le:1<q}). From~\eqref{eq:q/qq}, we obtain $|q(\bm{ab})|< 3|q(\bm a)q(\bm b)|$.
	
	To prove another inequality, assume that $|q(\bm b)|>4$, then $|p(\bm b)/q(\bm b)|\le\sqrt2/2+|q(\bm b)|^{-2}<4/5$. By~\eqref{eq:q/qq}, we have $|q(\bm a)q(\bm b)|/5<|q(\bm{ab})|$. Otherwise, if $|q(\bm b)|\le4$, then $|q(\bm a)q(\bm b)|/5<|q(\bm a)|<|q(\bm{ab})|$ by~\eqref{le:1<q}.
\end{proof}

Recall from~\eqref{eq:an} that $I=\{a\in\Z[i]\colon|a|\ge\sqrt2\}$. A \emph{cylinder} of level~$n$ is a subset of~$\dmn$ taking the form
\[ \cld(\bm u)=\bigl\{z\in\dmn\colon a_1(z)=u_1,\dots,a_n(z)=u_n\bigr\}, \]
where $\bm u=(u_1,\dots,u_n)\in I^n$. We adopt the convention that $I^0=\{\varnothing\}$ and $\cld(\varnothing)=\dmn$. One can check that $\{\cld(\bm u)\}_{\bm u\in I^n}$ forms a partition of~$\dmn$ except for some points in $\Q(i)$ for each~$n\ge0$. 

To study the cylinders~$\cld(\bm u)$ of level~$n$, let $T_{\bm u}=(T^n|_{\cld(\bm u)})^{-1}$ with the convention $T_\varnothing=\mathrm{identity}$. We call $\dmn_{\bm u}=T^n\bigl(\cld(\bm u)\bigr)=T_{\bm u}^{-1}\bigl(\cld(\bm u)\bigr)$ the \emph{prototype set} of~$\cld(\bm u)$. Note that $T_{\bm u}\colon\dmn_{\bm u}\to\cld(\bm u)$ is the M\"obius transformation of the form 
\begin{equation}\label{eq:Tu}
	T_{\bm u}\colon z\mapsto\frac{p(\bm u^-)z+p(\bm u)}{q(\bm u^-)z+q(\bm u)}\quad\text{or equivalently}\ T_{\bm u}\colon [0;\bm v]\mapsto[0;\bm{uv}].
\end{equation}

The lemma below is very useful in computing cylinders and their prototype sets.
\begin{lem}\label{l:image}
	Let $\bm u\in I^n$ and $\bm v\in I^m$ with $\cld(\bm u)\ne\emptyset$ and $\cld(\bm v)\ne\emptyset$. Let $T_{\bm u}$ and~$T_{\bm v}$ be as in~\eqref{eq:Tu}.
	\begin{enumerate}[\upshape(a)]
		\item \label{le:Cuv} $\cld(\bm{uv})=T_{\bm u}\bigl(\dmn_{\bm u}\cap\cld(\bm v)\bigr)$ and $\dmn_{\bm{uv}}=T_{\bm v}^{-1}(\dmn_{\bm u})\cap\dmn_{\bm v}$.
		\item \label{le:Fub} If $\bm v=b\in I$, then $\dmn_{\bm ub}=T_b^{-1}(\dmn_{\bm u})\cap\dmn$.
		\item \label{le:uv=u'v} If $\dmn_{\bm u}=\dmn_{\bm u'}$ for some sequence $\bm u'\in I^{n'}$, then $\dmn_{\bm{uv}}=\dmn_{\bm u'\bm v}$.
		\item \label{le:image} If $\cld(\bm v)\subset\dmn_{\bm u}$, then $\cld(\bm{uv})=T_{\bm u}\bigl(\cld(\bm v)\bigr)$ and $\dmn_{\bm{uv}}=\dmn_{\bm v}$. In particular, this holds if $\dmn_{\bm u}=\dmn$.
	\end{enumerate}
\end{lem}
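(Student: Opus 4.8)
The proof of Lemma~\ref{l:image} rests entirely on the multiplicative structure of the matrices in~\eqref{eq:Qpair} and the functional description~\eqref{eq:Tu} of $T_{\bm u}$ as the M\"obius map sending $[0;\bm v]$ to $[0;\bm{uv}]$. The plan is to treat parts~\eqref{le:Cuv}--\eqref{le:image} in order, deriving each from the preceding ones and from the factorization $T_{\bm{uv}}=T_{\bm u}\circ T_{\bm v}$, which is the linchpin of everything.

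First I would establish the composition identity $T_{\bm{uv}}=T_{\bm u}\circ T_{\bm v}$. By~\eqref{eq:Qpair}, the matrix associated to the concatenation $\bm{uv}$ is the product of the matrices associated to $\bm u$ and to $\bm v$ (after stripping the leading antidiagonal factor appropriately), so the corresponding M\"obius transformations compose; alternatively one reads it straight off the description $T_{\bm u}\colon[0;\bm w]\mapsto[0;\bm{uw}]$ in~\eqref{eq:Tu}. Granting this, part~\eqref{le:Cuv} follows: by definition $\cld(\bm{uv})=T_{\bm{uv}}(\dmn_{\bm{uv}})$ and, writing $T_{\bm{uv}}=T_{\bm u}\circ T_{\bm v}$, the domain $\dmn_{\bm{uv}}$ is exactly the set of $z\in\dmn_{\bm v}$ with $T_{\bm v}(z)\in\dmn_{\bm u}$, i.e.\ $\dmn_{\bm{uv}}=T_{\bm v}^{-1}(\dmn_{\bm u})\cap\dmn_{\bm v}$, and then $T_{\bm u}(\dmn_{\bm u}\cap\cld(\bm v))=T_{\bm u}(T_{\bm v}(\dmn_{\bm{uv}}))=\cld(\bm{uv})$. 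One only needs to note that $\cld(\bm v)=T_{\bm v}(\dmn_{\bm v})$ so that $\dmn_{\bm u}\cap\cld(\bm v)=T_{\bm v}(\dmn_{\bm{uv}})$. Part~\eqref{le:Fub} is the special case $\bm v=b$, where $\dmn_b=\dmn$ (a single partial quotient has prototype set the whole fundamental domain), so $T_b^{-1}(\dmn_{\bm u})\cap\dmn_b=T_b^{-1}(\dmn_{\bm u})\cap\dmn$.

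Part~\eqref{le:uv=u'v} is immediate from the formula for $\dmn_{\bm{uv}}$ in~\eqref{le:Cuv}: that formula, $\dmn_{\bm{uv}}=T_{\bm v}^{-1}(\dmn_{\bm u})\cap\dmn_{\bm v}$, depends on $\bm u$ only through the set $\dmn_{\bm u}$, hence if $\dmn_{\bm u}=\dmn_{\bm u'}$ then $\dmn_{\bm{uv}}=\dmn_{\bm u'\bm v}$. For part~\eqref{le:image}, suppose $\cld(\bm v)\subset\dmn_{\bm u}$. Since $\cld(\bm v)=T_{\bm v}(\dmn_{\bm v})$, this says $T_{\bm v}(\dmn_{\bm v})\subset\dmn_{\bm u}$, i.e.\ $\dmn_{\bm v}\subset T_{\bm v}^{-1}(\dmn_{\bm u})$, so $\dmn_{\bm{uv}}=T_{\bm v}^{-1}(\dmn_{\bm u})\cap\dmn_{\bm v}=\dmn_{\bm v}$ by~\eqref{le:Cuv}. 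Then $\cld(\bm{uv})=T_{\bm u}(\dmn_{\bm u}\cap\cld(\bm v))=T_{\bm u}(\cld(\bm v))$, again using $\cld(\bm v)\subset\dmn_{\bm u}$. The final sentence is the case $\dmn_{\bm u}=\dmn$, which trivially contains $\cld(\bm v)$.

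I expect the only genuinely delicate point to be bookkeeping about exceptional sets: the cylinders $\{\cld(\bm u)\}$ partition $\dmn$ only up to a countable subset of $\Q(i)$, and $T^n$ is a bijection from $\cld(\bm u)$ onto $\dmn_{\bm u}$ only after one has pinned down which boundary points lie in which cylinder. So in writing the identities $\cld(\bm v)=T_{\bm v}(\dmn_{\bm v})$ and $T_{\bm{uv}}=T_{\bm u}\circ T_{\bm v}$ I will need to be careful that both sides are interpreted as acting on the appropriate (half-open) prototype sets, and that $T_{\bm v}^{-1}$ in~\eqref{le:Cuv} is the genuine inverse of the restriction $T^n|_{\cld(\bm u)}$ rather than merely the M\"obius map with the same formula. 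Once the conventions $\dmn_{\bm u}=T^n(\cld(\bm u))$ and $T_{\bm u}=(T^n|_{\cld(\bm u)})^{-1}$ are taken literally, these subtleties dissolve, and the four parts reduce to the one-line set-theoretic manipulations above. The whole lemma is thus essentially a formal consequence of the cocycle relation for $(T_{\bm u})$, and I would present it as such, spending at most a sentence on each part.
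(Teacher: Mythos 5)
Your overall plan is the right one: all four parts follow from the cocycle relation $T_{\bm{uv}}=T_{\bm u}\circ T_{\bm v}$ and the set identity in part~\eqref{le:Cuv}, and your handling of~\eqref{le:Cuv}, \eqref{le:uv=u'v} and~\eqref{le:image} is essentially the paper's proof (modulo the care about half-open boundaries that you yourself flag).

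However, your argument for part~\eqref{le:Fub} has a genuine gap. You claim ``$\dmn_b=\dmn$ (a single partial quotient has prototype set the whole fundamental domain)'', so that $T_b^{-1}(\dmn_{\bm u})\cap\dmn_b=T_b^{-1}(\dmn_{\bm u})\cap\dmn$ is immediate. But this is false in general: by Example~\ref{e:reg}, $\cld(b)$ is full (i.e.\ $\dmn_b=\dmn$) if and only if $|b|\ge2\sqrt2$, whereas $I$ also contains $b$ with $\sqrt2\le|b|<2\sqrt2$ (e.g.\ $b=1+i$, $b=2$, $b=2+i$), for which $\dmn_b$ is a proper subset of~$\dmn$. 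Part~\eqref{le:Fub} is precisely the nontrivial statement that, even when $\dmn_b\subsetneq\dmn$, you may still replace $\dmn_b$ by $\dmn$ in the formula $\dmn_{\bm ub}=T_b^{-1}(\dmn_{\bm u})\cap\dmn_b$ from~\eqref{le:Cuv}. The reason is a conditional cancellation: the containment $T_b^{-1}(\dmn_{\bm u})\cap\dmn\supset T_b^{-1}(\dmn_{\bm u})\cap\dmn_b$ is trivial since $\dmn\supset\dmn_b$, and conversely if $z\in T_b^{-1}(\dmn_{\bm u})\cap\dmn$ then $T_b(z)=(z+b)^{-1}\in\dmn_{\bm u}\subset\dmn$, and the two facts $z\in\dmn$ and $(z+b)^{-1}\in\dmn$ together force $[1/T_b(z)]=[z+b]=b$, i.e.\ $T_b(z)\in\cld(b)$, hence $z=T(T_b(z))\in T(\cld(b))=\dmn_b$. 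This is the step your proposal is missing; without it the claim $\dmn_b=\dmn$ simply does not hold.

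Everything else in the proposal is sound and is organized the same way as the paper's argument. I would only suggest making the derivation of $\dmn_{\bm{uv}}=T_{\bm v}^{-1}(\dmn_{\bm u})\cap\dmn_{\bm v}$ in~\eqref{le:Cuv} slightly more explicit (write $\dmn_{\bm{uv}}=T^m(\dmn_{\bm u}\cap\cld(\bm v))$ and then pull $T^m|_{\cld(\bm v)}=T_{\bm v}^{-1}$ through the intersection), since as written it reads more as an assertion than a deduction.
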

\begin{proof}
	(a) Note that $z\in\cld(\bm{uv})$ if and only if $z\in\cld(\bm u)$ and $T^n(z)\in\cld(\bm v)$. This is also equivalent to $z\in\cld(\bm u)\cap T_{\bm u}\bigl(\cld(\bm v)\bigr)$. Hence,
	\[ \cld(\bm{uv})=\cld(\bm u)\cap T_{\bm u}\bigl(\cld(\bm v)\bigr)=T_{\bm u}\circ T^n\bigl(\cld(\bm u)\bigr)\cap T_{\bm u}\bigl(\cld(\bm v)\bigr)=T_{\bm u}\bigl(\dmn_{\bm u}\cap\cld(\bm v)\bigr). \]
	Consequently, 
	\begin{align*}
		\dmn_{\bm{uv}} &=T^{n+m}\bigl(\cld(\bm{uv})\bigr) =T^m\circ T^n\circ T_{\bm u}\bigl(\dmn_{\bm u}\cap\cld(\bm v)\bigr)\\
		&=T_{\bm v}^{-1}\bigl(\dmn_{\bm u}\cap\cld(\bm v)\bigr) =T_{\bm v}^{-1}(\dmn_{\bm u})\cap\dmn_{\bm v}.
	\end{align*}
	
	(b) By~\eqref{le:Cuv}, it suffices to show that $T_b^{-1}(\dmn_{\bm u})\cap\dmn=T_b^{-1}(\dmn_{\bm u})\cap\dmn_b$. The ``$\supset$'' part is obvious since $\dmn\supset\dmn_b$. Now pick $z\in T_b^{-1}(\dmn_{\bm u})\cap\dmn$, then $T_b(z)=(z+b)^{-1}\in\dmn_{\bm u}\subset\dmn$. This together with $z\in\dmn$ gives $T_b(z)\in\cld(b)$, and so $z\in T\bigl(\cld(b)\bigr)=\dmn_b$. This implies $z\in T_b^{-1}(\dmn_{\bm u})\cap\dmn_b$ and completes the proof of ``$\subset$'' part.
	
	(c) By~\eqref{le:Cuv}, $\dmn_{\bm{uv}}=T_{\bm v}^{-1}(\dmn_{\bm u})\cap\dmn_{\bm v}=T_{\bm v}^{-1}(\dmn_{\bm u'})\cap\dmn_{\bm v}=\dmn_{\bm u'\bm v}$.
	
	(d) By~\eqref{le:Cuv}, if $\cld(\bm v)\subset\dmn_{\bm u}$, then $\cld(\bm{uv})=T_{\bm u}\bigl(\dmn_{\bm u}\cap\cld(\bm v)\bigr)=T_{\bm u}\bigl(\cld(\bm v)\bigr)$, and so
	\[ \dmn_{\bm{uv}}=T^{n+m}\bigl(\cld(\bm{uv})\bigr) =T^m\circ T^n\circ T_{\bm u}\bigl(\cld(\bm v)\bigr)=T^m\bigl(\cld(\bm v)\bigr)=\dmn_{\bm v}. \]
	In particular, we always have $\cld(\bm v)\subset\dmn_{\bm u}$ if $\dmn_{\bm u}=\dmn$.
\end{proof}

Using Lemma~\ref{l:image}~\eqref{le:Fub} repeatedly, we obtain the following two examples of certain prototype sets, which are needed later.
\begin{exmp}\label{e:-2i}
	$\dmn_{-2i,-2,2i,-2,-2i}=\dmn_{-2i}=\dmn\setminus\overline B(i,1)$. 
\end{exmp}
The second equality follows from~\cite[page 8]{Gonza18}, though the notations we used are diffierent, the meaning is essentially the same. It is worth noting that
\begin{align*}
	\{z\in\C:\Im z=1/2\}^{-1}&=\{z\in\C:|z+i|=1\},\\
	\{z\in\C:\Re z=1/2\}^{-1}&=\{z\in\C:|z-1|=1\},\\
	\{z\in\C:\Im z=-1/2\}^{-1}&=\{z\in\C:|z-i|=1\},\\
	\{z\in\C:\Re z=-1/2\}^{-1}&=\{z\in\C:|z+1|=1\}.
\end{align*} 
Here $A^{-1}=\{z^{-1}\colon z\in A\}$. Computing step by step, we have
\begin{align*}
	\dmn_{-2i,-2}&=T_{-2}^{-1}(\dmn_{-2i})\cap\dmn=(\dmn_{-2i}^{-1}+2)\cap\dmn\\
	&=\bigg(\Big(\{z\in\C:\Im z>-1/2\}\setminus \big(B(-1,1)\cup B(i,1)\cup\bar B(1,1)\big)\Big)+2\bigg)\cap \dmn\\
	&=\Big(\{z\in\C:\Im z>-1/2\}\setminus \big(B(1,1)\cup B(2+i,1)\cup\bar B(3,1)\big)\Big)\cap \dmn\\
	&=(\dmn\cap\{z\in\C\colon\Im z>-1/2\})\setminus B(1,1).
\end{align*}
\begin{align*}
	\dmn_{-2i,-2,2i}&=T_{2i}^{-1}(\dmn_{-2i,-2})\cap\dmn=(\dmn_{-2i,-2}^{-1}-2i)\cap \dmn\\
	&=\bigg(\Big(\{z\in\C:\Re z\le1/2\}\setminus \big(B(-1,1)\cup \bar{B}(i,1)\cup\bar B(-i,1)\big)\Big)-2i\bigg)\cap \dmn\\
	&=\Big(\{z\in\C:\Re z\le1/2\}\setminus \big(B(-1-2i,1)\cup \bar{B}(-i,1)\cup\bar B(-3i,1)\big)\Big)\cap \dmn\\
	&=(\dmn\cap\{z\in\C\colon\Re z\le1/2\})\setminus \bar B(-i,1)\\
	&=\dmn\setminus\bar B(-i,1).
\end{align*}
\begin{align*}
	\dmn_{-2i,-2,2i,-2}&=T_{-2}^{-1}(\dmn_{-2i,-2,2i})\cap\dmn=(\dmn_{-2i,-2,2i}^{-1}+2)\cap \dmn\\
	&=\bigg(\Big(\{z\in\C:\Im z<1/2\}\setminus \big(B(-1,1)\cup \bar{B}(1,1)\cup\bar B(-i,1)\big)\Big)+2\bigg)\cap \dmn\\
	&=\Big(\{z\in\C:\Im z<1/2\}\setminus \big(B(1,1)\cup \bar{B}(3,1)\cup\bar B(2-i,1)\big)\Big)\cap \dmn\\
	&=\dmn\setminus B(1,1).
\end{align*}
\begin{align*}
	\dmn_{-2i,-2,2i,-2,-2i}&=T_{-2i}^{-1}(\dmn_{-2i,-2,2i,-2})\cap\dmn=(\dmn_{-2i,-2,2i,-2}^{-1}+2i)\cap \dmn\\
	&=\bigg(\Big(\{z\in\C:\Re z\le1/2\}\setminus \big(B(-1,1)\cup B(i,1)\cup\bar B(-i,1)\big)\Big)+2i\bigg)\cap \dmn\\
	&=\Big(\{z\in\C:\Re z\le1/2\}\setminus \big(B(-1+2i,1)\cup B(3i,1)\cup\bar B(i,1)\big)\Big)\cap \dmn\\
	&=\dmn\setminus \bar B(i,1).
\end{align*}

\begin{exmp}\label{e:2i-}
	$\dmn_{2i,-2+i,2i,-2+i,2i}=\dmn_{2i,-2+i,2i}=\dmn\cap\{z\in\C\colon|z+i|=1\}$.
\end{exmp}
The calculations are similar to Example~\ref{e:-2i}, so we give the results directly and omit the details.
\begin{align*}
&\dmn_{2i}=T_{2i}^{-1}(\dmn)\cap\dmn=\dmn\setminus B(-i,1),\\
&\dmn_{2i,-2+i}=T_{-2+i}^{-1}(\dmn_{2i})\cap\dmn=\{t-\tfrac{i}{2}\colon t\in[-\tfrac{1}{2},1-\tfrac{\sqrt{3}}{2}]\},\\
&\dmn_{2i,-2+i,2i}=T_{2i}^{-1}(\dmn_{2i,-2+i})\cap\dmn=\dmn\cap\{z\in\C\colon|z+i|=1\},\\
&\dmn_{2i,-2+i,2i,-2+i}=T_{-2+i}^{-1}(\dmn_{-2i,-2,2i})\cap\dmn\\
&\mspace{106mu}=\{t+\tfrac{i}{2}\colon t\in[-\infty,-1-\tfrac{\sqrt{3}}{2}]\cup[1+\tfrac{\sqrt{3}}{2},\infty]\},\\
&\dmn_{2i,-2+i,2i,-2+i,2i}=T_{2i}^{-1}(\dmn_{-2i,-2,2i,-2})\cap\dmn=\dmn\cap\{z\in\C\colon|z+i|=1\}.
\end{align*}
\begin{defn}[see~{\cite[\S1.2, \S3.1]{Gonza18}}]
	We say $\cld(\bm u)$ or~$\bm u$ is regular if $\dmn_{\bm u}$ has nonempty interior, otherwise it is said to be irregular. Moreover, we say $\cld(\bm u)$ or $\bm u$ is full if $\dmn_{\bm u}=\dmn$.
\end{defn}

\begin{exmp}[see~{\cite[page~8]{Gonza18}}]\label{e:reg}
	All the cylinders of level~$1$ are regular. Moreover, a cylinder~$\cld(b)$ of level~$1$ is full if and only if $|b|\ge2\sqrt2$.
\end{exmp}

The importance of regular cylinders is based on the fact that they take up all the Lebesgue measure.

\begin{lem}[see {\cite[Proposition~1.2.1]{Gonza18}}]\label{l:RCmeas}
	The Lebesgue measure of the union of all irregular cylinders is zero.
\end{lem}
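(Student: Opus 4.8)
The plan is to reduce the lemma to the assertion that \emph{every} irregular cylinder is Lebesgue-null, and then to invoke countability. First I would record that, by~\eqref{eq:Tu}, the map $T_{\bm u}$ is the restriction of a M\"obius transformation and carries $\dmn_{\bm u}$ bijectively onto~$\cld(\bm u)$; since $\cld(\bm u)\subset\dmn$ and $\dmn_{\bm u}\subset\dmn$ are bounded, $T_{\bm u}$ has no pole on~$\dmn_{\bm u}$ and $T_{\bm u}^{-1}$ has no pole on~$\cld(\bm u)$, so both are locally Lipschitz on the respective sets and hence preserve the class of Lebesgue-null sets. In particular $\lm(\cld(\bm u))=0$ if and only if $\lm(\dmn_{\bm u})=0$, so it suffices to prove that $\dmn_{\bm u}$ is null whenever $\bm u$ is irregular, that is, whenever $\dmn_{\bm u}^\circ=\emptyset$.

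The crux is the following claim, which I would prove by induction on~$n$: for every $\bm u\in I^n$ the topological boundary $\partial\dmn_{\bm u}$ is contained in a union of at most $4(n+1)$ circles and lines. For $n=0$ this is clear, since $\dmn_\varnothing=\dmn$ and $\partial\dmn$ lies on the four lines carrying the sides of the square. For the inductive step, write $\bm u=\bm u'b$ with $\bm u'\in I^{n-1}$ and $b\in I$; by Lemma~\ref{l:image}\eqref{le:Fub}, $\dmn_{\bm u}=T_b^{-1}(\dmn_{\bm u'})\cap\dmn$, so $\partial\dmn_{\bm u}\subset\partial\bigl(T_b^{-1}(\dmn_{\bm u'})\bigr)\cup\partial\dmn$. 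Since $T_b^{-1}\colon w\mapsto 1/w-b$ extends to a homeomorphism of the Riemann sphere mapping the family of circles and lines onto itself, $\partial\bigl(T_b^{-1}(\dmn_{\bm u'})\bigr)=T_b^{-1}(\partial\dmn_{\bm u'})$ lies on the $T_b^{-1}$-images of the $\le 4n$ circles and lines furnished by the induction hypothesis, hence again on at most $4n$ circles and lines; together with the four lines from $\partial\dmn$ this gives at most $4(n+1)$ of them, as claimed. Because a finite union of circles and lines is Lebesgue-null, an irregular $\bm u$ satisfies $\dmn_{\bm u}\subset\overline{\dmn_{\bm u}}=\dmn_{\bm u}^\circ\cup\partial\dmn_{\bm u}=\partial\dmn_{\bm u}$, which is null; hence $\lm(\cld(\bm u))=0$ by the first paragraph.

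Finally, $\bigcup_{n\ge0}I^n$ is countable, so there are only countably many irregular cylinders, and a countable union of Lebesgue-null sets is Lebesgue-null, which proves the lemma. The point I expect to demand the most care is the boundary bookkeeping in the inductive claim: one must check that passing through $T_b^{-1}$ --- which can turn a bounded circular arc into an unbounded one, or move the point at infinity in or out of the relevant set --- does not inflate the count, and this is handled most cleanly by taking all boundaries in the Riemann sphere and intersecting with~$\C$ only at the end. (The half-open convention for~$\dmn$ is immaterial here, since $\dmn$ and $\overline\dmn$ differ by a null set.)
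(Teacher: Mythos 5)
The paper does not prove this lemma itself: it cites Proposition~1.2.1 of Gonz\'alez Robert's thesis~\cite{Gonza18}. So there is no in-paper proof to compare against, and your task was effectively to reconstruct the cited argument. Your reconstruction is correct and is essentially the standard one: reduce to showing each individual irregular cylinder is null (countability handles the union); transfer nullity between $\cld(\bm u)$ and $\dmn_{\bm u}$ via the M\"obius map $T_{\bm u}$; and show by induction on the level, using Lemma~\ref{l:image}\eqref{le:Fub} and the fact that M\"obius maps preserve the family of generalized circles on $\hat\C$, that $\partial\dmn_{\bm u}$ lies on finitely many circles and lines. Combined with $\dmn_{\bm u}^\circ=\emptyset$ this forces $\dmn_{\bm u}\subset\partial\dmn_{\bm u}$ to be null.

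Two small remarks. First, the reduction $\lm(\cld(\bm u))=0\iff\lm(\dmn_{\bm u})=0$ is available more directly from the paper's own estimate $|T_{\bm u}'(z)|\asymp|q(\bm u)|^{-2}$ and~\eqref{eq:meaTA} (whose derivation does not use regularity), which gives $\lm(\cld(\bm u))\asymp\lm(\dmn_{\bm u})|q(\bm u)|^{-4}$; your locally-Lipschitz argument is fine, but the quantitative version is already on hand. Second, the explicit count $4(n+1)$ is inessential --- all you need is finiteness --- and you were right to flag that the boundary bookkeeping is most safely done in $\hat\C$; since every $\dmn_{\bm u}\subset\dmn$ is bounded, its boundary in $\C$ coincides with its boundary in $\hat\C$, so passing to the sphere and intersecting with $\C$ at the end is exactly the clean way to handle the pole of $T_b^{-1}$. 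One truly minor omission: Lemma~\ref{l:image} carries the hypothesis $\cld(\bm u)\ne\emptyset$, so the empty-cylinder case should be disposed of separately, but it is trivially null.
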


The following lemma asserts that there are only finitely many different prototype sets for regular sequences.
\begin{lem}[see~{\cite[Lemma~1]{EiINN19}} or~{\cite[\S3]{HiaVa18}}]\label{l:Ftype}
	For all regular sequences~$\bm u$, there are only finitely many different possibilities for the prototype sets~$\dmn_{\bm u}$. More precisely, $\dmn_{\bm u}^\circ$ must be one of the following 13 different sets: $\dmn^\circ$ or $\{i^j\dmng_k\}_{0\le j\le3,1\le k\le3}$, where $\dmng_1=(\dmn\setminus(B(1,1)\cup B(i,1)))^\circ$, $\dmng_2=(\dmn\setminus B(1,1))^\circ$ and $\dmng_3=(\dmn\setminus B(1+i,1))^\circ$.
\end{lem}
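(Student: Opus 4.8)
The plan is to reduce the statement to a finite computation via Lemma~\ref{l:image}~\eqref{le:Fub}, which says that for $b\in I$ the prototype set of $\bm ub$ depends on $\bm u$ only through $\dmn_{\bm u}$, namely $\dmn_{\bm ub}=T_b^{-1}(\dmn_{\bm u})\cap\dmn$. Since $T_b^{-1}$ is a M\"obius transformation (hence a homeomorphism of $\widehat\C$) and $(A\cap B)^\circ=A^\circ\cap B^\circ$ always, this passes to interiors: writing $F_b(S):=T_b^{-1}(S)\cap\dmn^\circ$, one has $\dmn_{\bm ub}^\circ=F_b(\dmn_{\bm u}^\circ)$. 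Because $\dmn_{\bm ub}^\circ\neq\emptyset$ forces $\dmn_{\bm u}^\circ\neq\emptyset$, prefixes of regular sequences are regular, so it suffices to prove that the $13$-element family $\mathcal S:=\{\dmn^\circ\}\cup\{i^j\dmng_k\colon 0\le j\le3,\ 1\le k\le3\}$ is forward invariant: $F_b(S)\in\mathcal S\cup\{\emptyset\}$ for all $S\in\mathcal S$ and $b\in I$. Since $\dmn^\circ\in\mathcal S$, an induction on the length of the sequence then gives the lemma.

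To carry out the invariance I would first record the geometry of $F_b$. Each $\dmng_k$ is $\dmn^\circ$ with one or two closed unit disks, centred at lattice points of $\{\pm1,\pm i\}\cup\{\pm1\pm i\}$, removed; the inversion $z\mapsto1/z$ sends the lines $\Re z=\pm\tfrac12$, $\Im z=\pm\tfrac12$ to the unit circles about $\pm1,\pm i$ (the formulas displayed before Example~\ref{e:-2i}) and sends the unit disk about a corner lattice point $c$ to the unit disk about $\bar c$ (since $|c|^2-1=1$ there). Translating by $-b$ with $b\in\Z[i]$ then exhibits $F_b(S)$ as $\dmn^\circ$ with some half-planes $\{\Re z\le\tfrac12-\Re b\}$, $\{\Im z\le\pm\tfrac12-\Im b\}$ and some unit disks about lattice points removed; intersecting with $\dmn^\circ$, each such half-plane either contains $\dmn^\circ$ or misses it, and among unit disks about lattice points only those with centre in $\{0\}\cup\{\pm1,\pm i\}\cup\{\pm1\pm i\}$ meet $\dmn^\circ$. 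The key simplification is an absorption identity: if $e$ is an edge lattice point adjacent to a corner lattice point $c$, then $\overline B(c,1)\cap\dmn^\circ\subset B(e,1)$ — this follows from $|z-e|^2=|z-c|^2+2\Re\bigl((z-c)\overline{(c-e)}\bigr)+1$ and the fact that $\Re\bigl((z-c)\overline{(c-e)}\bigr)<-\tfrac12$ for $z\in\dmn^\circ$. Hence the only sets $F_b(S)$ can produce are $\dmn^\circ$ with a sub-collection of these disks removed, and what remains is to check that the sub-collections which actually arise yield only $\emptyset$ or a member of $\mathcal S$ — in particular that no square with two opposite edge (or corner) disks, and none with an edge disk together with a non-adjacent corner disk, is ever produced.

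To make this finite I would note first that $\dmng_1$ and $\dmng_2$ are dealt with uniformly: $F_b(\dmng_2)=\dmn_b^\circ\cap\{\Re z<\tfrac12-\Re b\}$ and $F_b(\dmng_1)=\dmn_b^\circ\cap\{\Re z<\tfrac12-\Re b\}\cap\{\Im z>-\tfrac12-\Im b\}$, each of which is $\dmn_b^\circ$ or $\emptyset$, while $F_b(\dmn^\circ)=\dmn_b^\circ$; thus everything reduces to the level-one prototype sets $\dmn_b$ together with the single remaining case $F_b(\dmng_3)=\dmn_b^\circ\setminus\overline B(1-i-b,1)$. For $b$ full ($|b|\ge2\sqrt2$, Example~\ref{e:reg}) one has $\dmn_b=\dmn$, and then $F_b(\dmng_3)$ equals $\dmn^\circ$ except when $b=2-2i$, where it is $i\dmng_3$; for the $16$ non-full digits ($|b|^2\in\{2,4,5\}$) one computes $\dmn_b$ directly — exactly as in Examples~\ref{e:-2i} and~\ref{e:2i-}, each turning out to be a rotate of $\dmng_1$, $\dmng_2$ or $\dmng_3$ — and then checks $F_b(\dmng_3)$, invoking the absorption identity whenever the removed disk $\overline B(1-i-b,1)$ overlaps a disk already missing from $\dmn_b^\circ$. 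Throughout, the dihedral symmetry generated by $z\mapsto iz$ and $z\mapsto\bar z$ — which fixes $\dmn$, $I$ and $\mathcal S$ and satisfies $F_b(i^jS)=i^{-j}F_{i^jb}(S)$ — cuts the number of cases down to a handful. I expect the only real difficulty to be organizational rather than conceptual: once the recursion, the absorption identity and the full/non-full dichotomy are in place, the remaining work is careful bookkeeping, the two traps being an overlooked small digit $b$ and a mishandled overlap of removed disks — precisely what the absorption identity is there to control. (Conversely, all thirteen sets do occur, as the Examples already exhibit several of them.)
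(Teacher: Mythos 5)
The paper does not prove this lemma; it only cites it (to \cite{EiINN19} and \cite{HiaVa18}), so there is no in-paper argument to compare against. Your proposal is therefore a genuine contribution: a self-contained proof where the paper defers to the literature. The structure is sound and, as far as I can tell, correct. The reduction to forward-invariance of $\mathcal S$ under $F_b(S)=T_b^{-1}(S)\cap\dmn^\circ$ via Lemma~\ref{l:image}~\eqref{le:Fub} is exactly the right engine, and your three auxiliary observations do the real work: (i) inversion sends the edge disks $\overline B(\pm1,1)$, $\overline B(\pm i,1)$ to the half-planes $\{\pm\Re z\ge\tfrac12\}$, $\{\mp\Im z\ge\tfrac12\}$ and the corner disk $\overline B(c,1)$ ($|c|^2=2$) to $\overline B(\bar c,1)$, which turns $F_b(\dmng_1)$ and $F_b(\dmng_2)$ into $\dmn_b^\circ$ intersected with lattice-shifted half-planes (hence $\dmn_b^\circ$ or $\emptyset$) and $F_b(\dmng_3)$ into $\dmn_b^\circ\setminus\overline B(1-i-b,1)$; (ii) the absorption identity $\overline B(c,1)\cap\dmn^\circ\subset B(e,1)$ for a corner $c$ adjacent to an edge centre $e$ (which follows, as you note, from $\Re\bigl((z-c)\overline{(c-e)}\bigr)<-\tfrac12$ on $\dmn^\circ$) is what prevents configurations with $\ge2$ removed disks other than the two-adjacent-edges pattern of $\dmng_1$; and (iii) the dihedral symmetry $F_b(i^jS)=i^{-j}F_{i^jb}(S)$ cuts the casework by a factor of eight. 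With those in place the finite check is routine: for full $b$ ($|b|\ge2\sqrt2$) one has $\dmn_b=\dmn$ and $F_b(\dmng_3)=\dmn^\circ$ except $b=2-2i$ giving $i\dmng_3$, while the sixteen digits with $|b|^2\in\{2,4,5\}$ yield $\dmn_b^\circ$ equal (up to rotation) to $\dmng_1$, $\dmng_2$, $\dmng_3$ respectively, and the extra disk in $F_b(\dmng_3)$ is either disjoint from $\dmn^\circ$, covers $\dmn^\circ$ (giving $\emptyset$), or is absorbed. I verified this on a spot-check of the $|b|^2\in\{2,4,5\}$ cases and it closes. Two small cosmetic points: you should say explicitly that $(A\cap B)^\circ=A^\circ\cap B^\circ$ is being applied to $T_b^{-1}(\dmn_{\bm u})\cap\dmn$ using that $T_b^{-1}$ is open, and that the relevant boundary lines $\Re z=\tfrac12-\Re b$, $\Im z=\pm\tfrac12-\Im b$ lie in $\tfrac12+\Z$ and hence never cut $\dmn^\circ$ — this is what makes "contains or misses" dichotomous. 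Neither affects correctness. As written the argument is a roadmap rather than a line-by-line proof, but every idea needed to finish is present and correct.
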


The lemma below tells us how to obtain new regular or full cylinders from known ones.
\begin{lem}\label{l:newRC}
	Let $\bm u=(u_1,\dots,u_n)\in I^n$ and $\bm v=(v_1,\dots,v_m)\in I^m$.
	\begin{enumerate}[\upshape(a)]
		\item \label{le:ur} If $\cld(\bm u)$ is regular, then so is $\cld(u_1,\dots,u_j)$ for $1\le j\le n$.
		\item \label{le:ubF} If $\cld(\bm u)$ is regular, then $\cld(\bm ub)$ is full for some $b\in\{\pm2\pm2i\}$.
		\item \label{le:usr} If $\cld(\bm u)$ is full, then so is $\cld(u_{j+1},\dots,u_n)$ for $0\le j<n$.
		\item \label{le:uvsr} If both $\cld(\bm u)$ and $\cld(\bm v)$ are full, then so is $\cld(\bm{uv})$.
	\end{enumerate}
\end{lem}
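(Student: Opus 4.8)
The plan is to prove the four parts of Lemma~\ref{l:newRC} by systematically exploiting Lemma~\ref{l:image}, which relates the prototype set of a concatenation to those of its pieces. Throughout I would use repeatedly that $\dmn_{\bm w}^\circ\ne\emptyset$ is the definition of regularity and $\dmn_{\bm w}=\dmn$ that of fullness, together with Lemma~\ref{l:image}~\eqref{le:Cuv}, which gives $\dmn_{\bm{ww'}}=T_{\bm w'}^{-1}(\dmn_{\bm w})\cap\dmn_{\bm w'}$.

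For part~\eqref{le:ur}, I would argue contrapositively: write $\bm u=(u_1,\dots,u_j)(u_{j+1},\dots,u_n)=\bm u'\bm w$ and apply Lemma~\ref{l:image}~\eqref{le:Cuv} to get $\dmn_{\bm u}=T_{\bm w}^{-1}(\dmn_{\bm u'})\cap\dmn_{\bm w}$. If $\cld(\bm u')$ were irregular, then $\dmn_{\bm u'}$ has empty interior; since $T_{\bm w}^{-1}$ is a M\"obius transformation (hence maps sets with empty interior to sets with empty interior), $T_{\bm w}^{-1}(\dmn_{\bm u'})$ has empty interior, and so does the smaller set $\dmn_{\bm u}$, contradicting regularity of $\cld(\bm u)$. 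For part~\eqref{le:usr} I would do the analogous thing with fullness: write $\bm u=(u_1,\dots,u_j)(u_{j+1},\dots,u_n)=\bm u'\bm w$, so $\dmn=\dmn_{\bm u}=T_{\bm w}^{-1}(\dmn_{\bm u'})\cap\dmn_{\bm w}\subset\dmn_{\bm w}\subset\dmn$, forcing $\dmn_{\bm w}=\dmn$, i.e.\ $\cld(\bm w)$ is full. For part~\eqref{le:uvsr}, if $\cld(\bm u)$ is full then by Lemma~\ref{l:image}~\eqref{le:image} (the ``in particular'' clause, since $\dmn_{\bm u}=\dmn$) we get $\dmn_{\bm{uv}}=\dmn_{\bm v}=\dmn$, using fullness of $\cld(\bm v)$.

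Part~\eqref{le:ubF} is the substantive one and the main obstacle. The claim is that any regular $\cld(\bm u)$ can be extended by a single digit $b\in\{\pm2\pm2i\}$ to become full. By Lemma~\ref{l:image}~\eqref{le:Fub}, $\dmn_{\bm ub}=T_b^{-1}(\dmn_{\bm u})\cap\dmn$, and by Example~\ref{e:reg} each $\cld(b)$ with $|b|\ge2\sqrt2$ (in particular $b=\pm2\pm2i$) is full, meaning $\dmn_b=\dmn$ and $T_b\colon\dmn\to\cld(b)$ with $\dmn_b=\dmn$; so $\cld(\bm ub)$ is full precisely when $\dmn\subset T_b^{-1}(\dmn_{\bm u})$, i.e.\ when $T_b(\dmn)=(\dmn+b)^{-1}\subset\dmn_{\bm u}$. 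The key input is Lemma~\ref{l:Ftype}: since $\cld(\bm u)$ is regular, $\dmn_{\bm u}^\circ$ is one of the thirteen listed sets — either $\dmn^\circ$ or $i^j\dmng_k^\circ$ for $0\le j\le 3$, $1\le k\le 3$ — each of which is $\dmn$ minus at most two or three of the closed unit disks centered at $\pm1,\pm i,1+i$ (and its rotations). The plan is then a finite case check: for each of the thirteen possible $\dmn_{\bm u}^\circ$, I would exhibit the appropriate corner $b\in\{\pm2\pm2i\}$ such that the small set $(\dmn+b)^{-1}$ — a region near the origin obtained by inverting a unit square translated far into one quadrant — lies inside $\dmn_{\bm u}$. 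Concretely, $\dmn+b$ for $b=2+2i$ is a unit square in the region $\Re z,\Im z\in[3/2,5/2)\times[3/2,5/2)$ roughly, whose inverse is a small curvilinear quadrilateral hugging the origin inside the first "octant"; one then verifies this inverse misses all the excluded unit disks of the relevant $\dmng_k$ rotate. The verification that $(\dmn+b)^{-1}$ avoids $B(1,1)$, $B(i,1)$, $B(1+i,1)$ etc.\ amounts to checking that the inverted square stays within distance bounds from those disk centers; since the inverted square is contained in a disk of radius comparable to $1/|b|\approx 1/(2\sqrt2)$ about a point of modulus $\approx 1/(2\sqrt2)$, these are elementary but slightly tedious geometric estimates, of the same flavor as the explicit computations in Examples~\ref{e:-2i} and~\ref{e:2i-}. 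I expect the bookkeeping — pairing up each of the thirteen prototype shapes with a witnessing corner $b$ and checking disjointness from the forbidden disks — to be the only real work; everything else reduces to one-line applications of Lemma~\ref{l:image}.
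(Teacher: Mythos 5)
Your proof is correct and runs along essentially the same lines as the paper's: parts (a), (c), (d) are short deductions from Lemma~\ref{l:image}, and part (b) is where the real content lives, via Lemma~\ref{l:Ftype} and Example~\ref{e:reg}. Two comparative remarks. For (a), the paper's argument is a bit more direct: since $T^n|_{\cld(\bm u)}$ is M\"obius, regularity of $\cld(\bm u)$ is equivalent to $\cld(\bm u)$ itself having nonempty interior, and then $\cld(u_1,\dots,u_j)\supset\cld(\bm u)$ finishes it in one line; your contrapositive via $\dmn_{\bm u}=T_{\bm w}^{-1}(\dmn_{\bm u'})\cap\dmn_{\bm w}$ is equally valid but slightly longer. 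For (b), you propose a thirteen-case geometric check that $(\dmn+b)^{-1}\subset\dmn_{\bm u}$ for a suitable corner $b$; this would work, but the paper collapses it to a single verification by observing the nesting $\dmng_1\subset\dmng_2\subset\dmng_3\subset\dmn$ (so only the smallest prototype $\dmng_1$ and its rotations need be handled) together with the rotation identity $i^j\cld(-2+2i)=\cld\bigl((-i)^j(-2+2i)\bigr)$, after which the one computation $\cld(-2+2i)\subset\dmng_1$ settles every case. If you carry out your plan you should incorporate this nesting to avoid redundant bookkeeping; otherwise, the argument is sound.
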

\begin{proof}
	(a) Note that the restriction of~$T^n$ to the cylinder $\cld(\bm u)$ is a M\"obius transformation. Thus, a cylinder is regular if and only if it has nonempty interior. So the conclusion follows from $\cld(u_1,\dots,u_j)\supset\cld(\bm u)$.
	
	(b) Let $\{\dmng_k\}_{1\le k\le3}$ be as in Lemma~\ref{l:Ftype}. Note that $\dmng_1\subset\dmng_2\subset\dmng_3\subset\dmn$. Thus, given a regular sequence~$\bm u$, Lemma~\ref{l:Ftype} says that $i^j\dmng_1\subset\dmn_{\bm u}$ for some $j\in\{0,1,2,3\}$. One can check that 
	\[ \cld(-2+2i)\subset\dmng_1\quad\text{and}\quad i^j\cld(-2+2i)=\cld\bigl((-i)^j\cdot(-2+2i)\bigr). \]
	Hence, we have $\cld(b)\subset i^j\dmng_1\subset\dmn_{\bm u}$ for $b=(-i)^j\cdot(-2+2i)\in\{\pm2\pm2i\}$. Finally, Lemma~\ref{l:image}~\eqref{le:image} gives $\dmn_{\bm ub}=\dmn_b=\dmn$, since $\cld(b)$ is full by Example~\ref{e:reg}.

	(c) If $z\in\cld(\bm u)$, then $T^j(z)=[0;u_{j+1},\dots,u_n,\dotsc]\in\cld(u_{j+1},\dots,u_n)$. Hence,
	\[ \dmn=T^n\bigl(\cld(\bm u)\bigr)=T^{n-j}\circ T^j\bigl(\cld(\bm u)\bigr)\subset T^{n-j}\bigl(\cld(u_{j+1},\dots,u_n)\bigr)\subset\dmn. \]
	This clearly forces $\dmn_{u_{j+1},\dots,u_n}=T^{n-j}\bigl(\cld(u_{j+1},\dots,u_n)\bigr)=\dmn$.
	
	(d) By Lemma~\ref{l:image}~\eqref{le:image}, we have $\dmn_{\bm{uv}}=\dmn_{\bm v}=\dmn$.
\end{proof}

We want to estimate the diameter and Lebesgue measure of regular cylinders. Lemma~\ref{l:props}~\eqref{le:z-pq} gives upper bounds: $|\cld(\bm u)|\le2|q(\bm u)|^{-2}$ and $\lm\bigl(\cld(\bm u)\bigr)\le\pi|q(\bm u)|^{-4}$. To obtain lower bounds, let $T_{\bm u}$ be as in~\eqref{eq:Tu}.  A direct computation gives that $|T_{\bm u}'(z)|\asymp|q(\bm u)|^{-2}$ for $z\in\dmn_{\bm u}$.  Thus, for all Borel sets $A\subset\dmn_{\bm u}$,
\begin{equation}\label{eq:meaTA}
	\lm\bigl(T_{\bm u}(A)\bigr)=\int_A|T_{\bm u}'(z)|^2\D\lm(z) \asymp\lm(A)|q(\bm u)|^{-4}.
\end{equation}
In particular, $\lm\bigl(\cld(\bm u)\bigr)= \lm\bigl(T_{\bm u}(\dmn_{\bm u})\bigr)\asymp\lm\bigl(\dmn_{\bm u}\bigr)|q(\bm u)|^{-4}$. By Lemma~\ref{l:Ftype}, there are only finitely many different possibilities for~$\dmn_{\bm u}$, each of them is contained in $ \dmn $ and contains an open square with edge length $ 1/2 $. Consequently, $\lm\bigl(\cld(\bm u)\bigr)\asymp|q(\bm u)|^{-4}$, and so $|\cld(\bm u)|\asymp|q(\bm u)|^{-2}$. We summarize above in the lemma below.

\begin{lem}\label{l:diamea}
	There is a constant~$0<\cst_0<1$ such that, for all regular cylinders~$\cld(\bm u)$,
	\[ \frac{\cst_0}{|q(\bm u)|^2}\le|\cld(\bm u)|\le\frac{2}{|q(\bm u)|^2}\quad\text{and}\quad
	\frac{\pi\cst_0}{|q(\bm u)|^4}\le\lm\bigl(\cld(\bm u)\bigr)\le\frac{\pi}{|q(\bm u)|^4}. \]
\end{lem}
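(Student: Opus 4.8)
The plan is to prove the four displayed inequalities one at a time, using only the facts already recorded about the maps $T_{\bm u}$ together with the finiteness statement of Lemma~\ref{l:Ftype}; the constant $\cst_0$ will be extracted at the very end as the smallest of the constants produced along the way. The two \emph{upper} bounds are immediate. Fix a regular cylinder $\cld(\bm u)$ with $\bm u=(u_1,\dots,u_n)$; every $z\in\cld(\bm u)$ has first $n$ partial quotients $u_1,\dots,u_n$, hence $n$-th convergent $p(\bm u)/q(\bm u)$, so Lemma~\ref{l:props}~\eqref{le:z-pq} gives $|z-p(\bm u)/q(\bm u)|\le|q(\bm u)|^{-2}$. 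Thus $\cld(\bm u)\subset\overline B\bigl(p(\bm u)/q(\bm u),\,|q(\bm u)|^{-2}\bigr)$, which yields $|\cld(\bm u)|\le2|q(\bm u)|^{-2}$ and $\lm\bigl(\cld(\bm u)\bigr)\le\pi|q(\bm u)|^{-4}$ at once.

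For the two \emph{lower} bounds I would work through the conformal map $T_{\bm u}\colon\dmn_{\bm u}\to\cld(\bm u)$ of~\eqref{eq:Tu}. The first step is the uniform derivative estimate $|T_{\bm u}'(z)|\asymp|q(\bm u)|^{-2}$ for $z\in\dmn_{\bm u}$: from~\eqref{eq:Tu} and Lemma~\ref{l:props}~\eqref{le:pq-pq} one gets $T_{\bm u}'(z)=\pm\bigl(q(\bm u^-)z+q(\bm u)\bigr)^{-2}$, so it suffices to prove $|q(\bm u^-)z+q(\bm u)|\asymp|q(\bm u)|$ uniformly in $z$ and in the regular sequence $\bm u$. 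Writing $q(\bm u^-)z+q(\bm u)=q(\bm u)(1-zw)$ with $w=-q(\bm u^-)/q(\bm u)$, Lemma~\ref{l:props}~\eqref{le:1<q} gives $|w|<1$ while $\dmn_{\bm u}\subset\dmn$ forces $|z|\le\sqrt2/2$; hence $|zw|\le\sqrt2/2$ and
\[ \bigl(1-\tfrac{\sqrt2}{2}\bigr)|q(\bm u)|\le\bigl|q(\bm u^-)z+q(\bm u)\bigr|\le\bigl(1+\tfrac{\sqrt2}{2}\bigr)|q(\bm u)|, \]
as wanted. Feeding this into the area identity~\eqref{eq:meaTA} with $A=\dmn_{\bm u}$ gives $\lm\bigl(\cld(\bm u)\bigr)=\lm\bigl(T_{\bm u}(\dmn_{\bm u})\bigr)\asymp\lm(\dmn_{\bm u})\,|q(\bm u)|^{-4}$, and since Lemma~\ref{l:Ftype} leaves only thirteen possibilities for $\dmn_{\bm u}^\circ$, each contained in $\dmn$ and each containing an axis-parallel open square of side $1/2$, we have $\tfrac14\le\lm(\dmn_{\bm u})\le1$; therefore $\lm\bigl(\cld(\bm u)\bigr)\asymp|q(\bm u)|^{-4}$, which is the measure lower bound.

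Finally, to pass from the measure lower bound to the diameter lower bound, note that any nonempty subset of~$\C$ of diameter $d$ lies in a closed ball of radius $d$, whence $\lm\bigl(\cld(\bm u)\bigr)\le\pi|\cld(\bm u)|^2$; combining this with $\lm(\cld(\bm u))\ge c\,|q(\bm u)|^{-4}$ from the previous step gives $|\cld(\bm u)|\ge\sqrt{c/\pi}\,|q(\bm u)|^{-2}$. Taking $\cst_0\in(0,1)$ no larger than the smallest implied constant in the four estimates completes the argument. I expect the only point needing genuine care to be the two-sided bound on $|q(\bm u^-)z+q(\bm u)|$ --- equivalently, checking that the pole $-q(\bm u)/q(\bm u^-)$ of $T_{\bm u}$ stays uniformly away from $\overline\dmn$, which is what the inequality $|w|<1$ encodes --- together with invoking the finite list of prototype sets from Lemma~\ref{l:Ftype} to trap $\lm(\dmn_{\bm u})$ from both sides; everything else is routine bookkeeping.
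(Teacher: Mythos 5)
Your argument is correct and follows the paper's own route exactly: upper bounds from Lemma~\ref{l:props}~\eqref{le:z-pq}, the derivative estimate $|T_{\bm u}'(z)|\asymp|q(\bm u)|^{-2}$ fed into the area identity~\eqref{eq:meaTA}, the finiteness from Lemma~\ref{l:Ftype} to pin down $\lm(\dmn_{\bm u})$ up to constants, and finally deducing the diameter lower bound from the area lower bound. The only addition is that you spelled out the ``direct computation'' of $|q(\bm u^-)z+q(\bm u)|\asymp|q(\bm u)|$ via $|q(\bm u^-)/q(\bm u)|<1$ and $|z|\le\sqrt2/2$, which the paper leaves implicit.
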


\subsection{On the boundary of regular cylinders} \label{ss:BRC}

In this subsection, we give two lemmas which concern rational points on the boundary of regular cylinders. This is the key to find complex irrational numbers which have quite different HCF partial quotients compared with their rational approximations. 

\begin{lem}\label{l:vk}
	For $k\ge0$, let
	\begin{align}
		\bm v_k&=(3,-2i,\underbrace{-2,2i,-2,-2i,\dots, -2,2i,-2,-2i}_{\text{$k$ repetitions of $-2,2i,-2,-2i$}}), \label{eq:vk} \\
		\tilde{\bm v}_k &=(3+i,2i,\underbrace{-2+i,2i,-2+i,2i,\dots,-2+i,2i,-2+i,2i}_{\text{$k$ repetitions of  $-2+i,2i,-2+i,2i$}}). \label{eq:vk'}
	\end{align}
	The following statements hold.
	\begin{enumerate}[\upshape(a)]
		\item \label{le:Fv} For all $k\ge0$, $\dmn_{\bm v_k}=\dmn_{-2i}=\dmn\setminus\overline B(i,1)$.
		\item \label{le:Fv'} For all $k\ge1$, $\dmn_{\tilde{\bm v}_k}=\dmn_{2i,-2+i,2i}=\dmn\cap\{z\in\C\colon|z+i|=1\}$.
		\item \label{le:v=v'} For all $k\ge0$, $[0;\bm v_k]=[0;\tilde{\bm v}_k]$.
	\end{enumerate}
\end{lem}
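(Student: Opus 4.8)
The plan is to deduce (a) and (b) from Examples~\ref{e:-2i} and~\ref{e:2i-} by a short induction, and to prove (c) by passing to the $2\times2$ matrices that represent the relevant M\"obius maps.

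For parts (a) and (b) the mechanism is identical. First note the elementary fact: if $\dmn_{\bm y\bm z}=\dmn_{\bm y}$ for some words $\bm y,\bm z$, then $\dmn_{\bm y\bm z^n}=\dmn_{\bm y}$ for all $n\ge0$; indeed, applying Lemma~\ref{l:image}\eqref{le:uv=u'v} with $\bm u=\bm y\bm z$, $\bm u'=\bm y$, $\bm v=\bm z^{n-1}$ gives $\dmn_{\bm y\bm z^n}=\dmn_{\bm y\bm z^{n-1}}$, and one iterates down to $n=0$. Since $|3|,|3+i|>2\sqrt2$, the cylinders $\cld(3)$ and $\cld(3+i)$ are full (Example~\ref{e:reg}), so by Lemma~\ref{l:image}\eqref{le:image} the leading digit of $\bm v_k$ resp.\ $\tilde{\bm v}_k$ may be dropped: $\dmn_{\bm v_k}=\dmn_{(-2i)\bm r^k}$ and $\dmn_{\tilde{\bm v}_k}=\dmn_{(2i)\bm s^{2k}}$, where $\bm r=(-2,2i,-2,-2i)$ and $\bm s=(-2+i,2i)$ (so that $(-2+i,2i,-2+i,2i)=\bm s^2$). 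Example~\ref{e:-2i} is precisely the statement $\dmn_{(-2i)\bm r}=\dmn_{(-2i)}$, so the elementary fact gives $\dmn_{\bm v_k}=\dmn_{(-2i)}=\dmn\setminus\overline B(i,1)$, proving (a). Example~\ref{e:2i-} is the statement $\dmn_{(2i)\bm s^2}=\dmn_{(2i)\bm s}$, i.e.\ $\dmn_{\bm w\bm s}=\dmn_{\bm w}$ with $\bm w=(2i)\bm s=(2i,-2+i,2i)$, so the elementary fact gives $\dmn_{(2i)\bm s^m}=\dmn_{(2i,-2+i,2i)}$ for all $m\ge1$; taking $m=2k$, which is allowed exactly when $k\ge1$, proves (b).

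For part (c), write $A=\left(\begin{smallmatrix}0&1\\1&0\end{smallmatrix}\right)$ and $P_a=\left(\begin{smallmatrix}a&1\\1&0\end{smallmatrix}\right)$. By~\eqref{eq:Qpair}, $[0;c_1,\dots,c_n]$ is the ratio of the two entries in the first column of $AP_{c_1}\cdots P_{c_n}$; equivalently it equals $M(\infty)$ when $M=AP_{c_1}\cdots P_{c_n}$ is regarded as a M\"obius transformation. So, setting $M_{\bm v_k}=AP_3P_{-2i}Q^k$ and $M_{\tilde{\bm v}_k}=AP_{3+i}P_{2i}\widetilde Q^k$ with $Q=P_{-2}P_{2i}P_{-2}P_{-2i}$ and $\widetilde Q=P_{-2+i}P_{2i}P_{-2+i}P_{2i}$, it is enough to show that $N_k:=M_{\bm v_k}^{-1}M_{\tilde{\bm v}_k}$ has vanishing lower-left entry: then $N_k$ fixes $\infty$ and $[0;\bm v_k]=M_{\bm v_k}(\infty)=M_{\bm v_k}\bigl(N_k(\infty)\bigr)=M_{\tilde{\bm v}_k}(\infty)=[0;\tilde{\bm v}_k]$. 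Cancelling the $A$'s, $N_k=Q^{-k}\bigl(P_{-2i}^{-1}P_3^{-1}P_{3+i}P_{2i}\bigr)\widetilde Q^k$, and a direct computation gives $P_3^{-1}P_{3+i}=\left(\begin{smallmatrix}1&0\\i&1\end{smallmatrix}\right)$ and then $P_{-2i}^{-1}\left(\begin{smallmatrix}1&0\\i&1\end{smallmatrix}\right)P_{2i}=-\left(\begin{smallmatrix}1&-i\\0&1\end{smallmatrix}\right)$. Multiplying out the four-fold products yields $Q=\left(\begin{smallmatrix}17+4i&-4+8i\\-8&1-4i\end{smallmatrix}\right)$ together with the crucial symmetry $\widetilde Q=-\overline Q$ (entrywise conjugate); hence $\widetilde Q^k=(-1)^k\overline{Q^k}$ and $N_k=(-1)^{k+1}Q^{-k}\left(\begin{smallmatrix}1&-i\\0&1\end{smallmatrix}\right)\overline{Q^k}$. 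Writing $Q^k=\left(\begin{smallmatrix}a&b\\c&d\end{smallmatrix}\right)$, so $ad-bc=1$, the lower-left entry of $N_k$ is a nonzero multiple of $2\Im(a\bar c)+|c|^2$, which vanishes if and only if $c=0$ or $\Im(a/c)=-\tfrac12$; and $a/c=Q^k(\infty)$. For $k=0$ this is immediate. For $k\ge1$ one checks $\Im\bigl(Q^k(\infty)\bigr)=-\tfrac12$ by observing that $Q(\infty)$, $Q(\tfrac12-\tfrac i2)$, $Q(-\tfrac12-\tfrac i2)$ are three distinct points of the line $\{\Im z=-\tfrac12\}$, so the M\"obius map $Q$ carries the circle $\{\Im z=-\tfrac12\}\cup\{\infty\}$ onto itself; since $\operatorname{tr}Q=18$, the only periodic points of $Q$ are its two fixed points, and $\infty$ is not among them (the lower-left entry of $Q$ is $-8$), whence $Q^k(\infty)$ is a finite point of that line for every $k\ge1$. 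This proves (c).

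The routine but unavoidable labor is the $2\times2$ matrix arithmetic in (c); its one genuinely informative ingredient is the symmetry $\widetilde Q=-\overline Q$, which is exactly what brings complex conjugation into play and forces $[0;\bm v_k]=[0;\tilde{\bm v}_k]$. Parts (a) and (b) are, by contrast, purely bookkeeping once the two Examples are granted. As a check on the smallest case, $[0;3,-2i]=\tfrac{2}{6+i}=[0;3+i,2i]$, and for (b) the case $k=1$ is Example~\ref{e:2i-} itself.
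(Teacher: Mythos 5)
Your proof is correct. Parts (a) and (b) are handled by the same mechanism as the paper (drop the full leading digit via Lemma~\ref{l:image}\eqref{le:image}, then iterate Lemma~\ref{l:image}\eqref{le:uv=u'v} against Examples~\ref{e:-2i} and~\ref{e:2i-}); your ``elementary fact'' is exactly the paper's induction packaged slightly more cleanly.

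For part (c) you take a genuinely different route. Both proofs hinge on the same symmetry $\widetilde Q=-\overline Q$ of the period matrices, and both come down to the fact $\Im\bigl(Q^k(\infty)\bigr)=-\tfrac12$. The paper proves this by writing $s_k=p(\bm v_k^*)$, $t_k=q(\bm v_k^*)$, using the mirror formula (Lemma~\ref{l:props}\eqref{le:mf}) together with the palindromicity of $\bm v_k^*$ to get $s_k=t_k^-$, and then running a coupled induction showing $\Re t_k=0$ and $\Im(s_k/t_k)=\tfrac12$ simultaneously (which, after a short computation, is equivalent to your condition on $Q^k(\infty)$). You instead observe that $Q$ carries the line $\{\Im z=-1/2\}\cup\{\infty\}$ to itself --- verified on three explicit points --- so the whole forward orbit of $\infty$ stays on it, and then you exclude $Q^k(\infty)=\infty$ by noting that $Q$ is hyperbolic ($\operatorname{tr}Q=18$, $\det Q=1$) with $\infty$ not a fixed point, so $\infty$ is not periodic. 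Your version trades the paper's palindrome/mirror-formula trick and the double induction for a single invariant-circle observation plus a standard fact about hyperbolic M\"obius maps; this is more geometric and, to my mind, makes it clearer \emph{why} the identity persists for all $k$. The paper's argument is more self-contained and avoids appealing to the fixed-point dynamics of loxodromic transformations, but it obscures the geometry. Both are complete and correct.
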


\begin{proof}
	(a) Example~\ref{e:-2i} gives that $\dmn_{-2i} =\dmn\setminus\overline B(i,1)$. We prove $\dmn_{\bm v_k}=\dmn_{-2i}$ by induction on~$k$. For $k=0$, we have $\cld(-2i)\subset\dmn=\dmn_3$, since $\cld(3)$ is full by Example~\ref{e:reg}. Hence, by Lemma~\ref{l:image}~\eqref{le:image}, $\dmn_{\bm v_0}=\dmn_{3,-2i}=\dmn_{-2i}$. Now assume that this holds for~$\bm v_{k-1}$. By Lemma~\ref{l:image}~\eqref{le:uv=u'v} and Example~\ref{e:-2i}, 
	\[ \dmn_{\bm v_k}=\dmn_{\bm v_{k-1},-2,2i,-2,-2i} =\dmn_{-2i,-2,2i,-2,-2i}=\dmn_{-2i}. \]
	
	(b) This can be proved in much the same way as~\eqref{le:Fv} through the replacement of Example~\ref{e:-2i} by Example~\ref{e:2i-}, and we therefore omit it.
	
	(c) Let $\bm v^*_k$ and $\tilde{\bm v}^*_k$ be the sequences obtained from $\bm v_k$ and $\tilde{\bm v}_k$ by deleting the first number, respectively. To simplify notation, write 
	\begin{align*}
		s_k&=p(\bm v^*_k),\quad t_k=q(\bm v^*_k),\quad
		s_k^-=p\bigl((\bm v^*_k)^-\bigr),\quad t_k^-=q\bigl((\bm v^*_k)^-\bigr),\\
		\tilde s_k&=p(\tilde{\bm v}^*_k),\quad \tilde t_k=q(\tilde{\bm v}^*_k),\quad
		\tilde s_k^-=p\bigl((\tilde{\bm v}^*_k)^-\bigr),\quad \tilde t_k^-=q\bigl((\tilde{\bm v}^*_k)^-\bigr).
	\end{align*}
	By~\eqref{eq:Qpair}, for $k\ge0$,
	\begin{align}
		\begin{pmatrix}
			s_k & s_k^-\\
			t_k & t_k^-
		\end{pmatrix} 
		&=
		\begin{pmatrix}
			0 & 1\\
			1 & 0
		\end{pmatrix}
		\begin{pmatrix}
			-2i & 1\\
			1 & 0
		\end{pmatrix}\left(
		\begin{pmatrix}
			-2 & 1 \\ 
			1 & 0	
		\end{pmatrix}
		\begin{pmatrix}
			2i & 1 \\ 
			1 & 0
		\end{pmatrix}
		\begin{pmatrix}
			-2 & 1 \\ 
			1 & 0
		\end{pmatrix}
		\begin{pmatrix}
			-2i & 1 \\ 
			1 & 0
		\end{pmatrix}
		\right)^k \notag\\
		&=\begin{pmatrix}
			1 & 0 \\ 
			-2i & 1
		\end{pmatrix}
		\begin{pmatrix}
			17+4i & -4+8i \\ 
			-8 & 1-4i
		\end{pmatrix}^k. \label{eq:stk}
	\end{align}
	Similarly, for $k\ge0$,
	\[\begin{pmatrix}
		\tilde{s}_k & \tilde{s}_k^-\\
		\tilde{t}_k & \tilde{t}_k^-
	\end{pmatrix}
	=
	\begin{pmatrix}
		1 & 0 \\ 
		2i & 1
	\end{pmatrix}
	\left(-\overline{
		\begin{pmatrix}
			17+4i & -4+8i \\ 
			-8 & 1-4i
	\end{pmatrix}}\right)^k=(-1)^k\overline{
		\begin{pmatrix}
			s_k & s_k^- \\ 
			t_k & t_k^-
	\end{pmatrix}}.\]
	Thus, $\tilde{s}_k/\tilde{t}_k=\overline{s_k/t_k}$ for $k\ge0$. Furthermore, we claim that
	\begin{equation}\label{eq:sktk}
		\Im(s_k/t_k)=1/2\quad\text{and}\quad
		\Re t_k=0\qquad\text{for $k\ge0$}.
	\end{equation}
	Consequently, $s_k/t_k=i+\tilde s_k/\tilde t_k$. Hence, for $k\ge0$,
	\[ [0;\bm v_k]=(3+s_k/t_k)^{-1}=(3+i+\tilde s_k/\tilde t_k)^{-1}=[0;\tilde{\bm v}_k]. \]
	
	It remains to prove~\eqref{eq:sktk}. We begin by proving
	\begin{equation}\label{eq:s=t-}
		s_k=t_k^-\qquad\text{for $k\ge0$}.
	\end{equation}
	For this, observe that the reverse sequence of~$\bm v^*_k$ is itself. By the mirror formula (Lemma~\ref{l:props}~\eqref{le:mf}),  
	\[ \frac{s_k}{t_k}=\frac{p(\bm v^*_k)}{q(\bm v^*_k)}=[0;\bm v^*_k]=\frac{q\bigl((\bm v^*_k)^-\bigr)}{q(\bm v^*_k)}=\frac{t_k^-}{t_k}. \]
	So we have $s_k=t_k^-$.
	
	We now prove~\eqref{eq:sktk} by induction on~$k$. For $k=0$, \eqref{eq:stk} gives that $s_0/t_0=i/2$ and $t_0=-2i$.	Now suppose that~\eqref{eq:sktk} holds for $k-1$. We first show that $\Re t_k=0$.  By~\eqref{eq:stk}, $t_k=(17+4i)t_{k-1}-8t_{k-1}^-$. Thus,
	\[ \frac{t_k}{t_{k-1}}=\frac{(17+4i)t_{k-1}-8t_{k-1}^-}{t_{k-1}}=17+4i-8\frac{s_{k-1}}{t_{k-1}}. \]
	Here we use~\eqref{eq:s=t-} to replace $t_{k-1}^-$ by~$s_{k-1}$. Consequently, by the inductive hypothesis $\Im(s_{k-1}/t_{k-1})=1/2$, we have $t_k/t_{k-1}\in\R$. This together with another inductive hypothesis $\Re t_{k-1}=0$ gives $\Re t_k=0$.
	
	Finally, we turn to prove $\Im(s_k/t_k)=1/2$. By~\eqref{eq:stk}, $s_k=(17+4i)s_{k-1}-8s_{k-1}^-$ and $t_k=(17+4i)t_{k-1}-8t_{k-1}^-$. This together with Lemma~\ref{l:props}~\eqref{le:pq-pq} gives
	\[ t_kt_{k-1}\left(\frac{s_k}{t_k}-\frac{s_{k-1}}{t_{k-1}}\right) =s_kt_{k-1}-t_ks_{k-1}=8(s_{k-1}t_{k-1}^--t_{k-1}s_{k-1}^-)\in\R. \]
	Note that $t_kt_{k-1}\in\R$ since $\Re t_k=\Re t_{k-1}=0$. It follows that $s_k/t_k-s_{k-1}/t_{k-1}\in\R$. Thus, $\Im(s_k/t_k)=\Im(s_{k-1}/t_{k-1})=1/2$.
\end{proof}

\begin{lem}\label{l:uvb}
	Let $\bm v_k$ and $\tilde{\bm v}_k$ be as in Lemma~\ref{l:vk}, $\bm a\in I^n$ with $\bm a$ being full and $b\in\Z[i]$ with $|b|\ge2\sqrt2$ and $\Im b\ge1$. Let $p=p(\bm a\bm v_k)$ and $q=q(\bm a\bm v_k)$. The following statements hold.
	\begin{enumerate}[\upshape(a)]
		\item \label{le:uvbF} For all $k\ge0$, the cylinder $\cld(\bm a\bm v_kb)$ is full and $\overline{\cld(\bm a\bm v_kb)}\subset\cld(\bm a)^\circ$.
		\item  \label{le:Cuvk'}  For $k\ge1$, $p/q\in\cld(\bm a\tilde{\bm v}_k)$.
		\item \label{le:uvdd} Let $\dd(\cdot,\cdot)$ be given by~\eqref{eq:dd}. Then for $k\ge1$,
		\[ \dd(z,p/q)=3k+2\qquad\text{for}\ z\in\cld(\bm a\bm v_kb). \]
	\end{enumerate}
\end{lem}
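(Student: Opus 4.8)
The plan is to handle the three parts in order; the workhorse throughout is Lemma~\ref{l:image}~\eqref{le:image}, which propagates a prototype set across a full prefix, together with the explicit prototype sets of Lemma~\ref{l:vk} and Examples~\ref{e:-2i} and~\ref{e:2i-}.

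For~\eqref{le:uvbF}: since $\bm a$ is full, Lemma~\ref{l:image}~\eqref{le:image} gives $\dmn_{\bm a\bm v_k}=\dmn_{\bm v_k}$, which equals $\dmn\setminus\overline B(i,1)$ by Lemma~\ref{l:vk}~\eqref{le:Fv}. The crucial point — and the step I expect to be the main obstacle — is that the hypothesis $\Im b\ge1$ forces $\cld(b)\subset\dmn\setminus\overline B(i,1)$. To obtain this I would invoke the inversion identities recorded after Example~\ref{e:-2i}: the line $\{\Im z=-1/2\}$ inverts to the circle $\{|z-i|=1\}$, and tracking which side maps where shows that a nonzero $z$ lies in $\overline B(i,1)$ exactly when $\Im(1/z)\le-1/2$; but for $z\in\cld(b)$ one has $1/z\in\dmn+b$, so $\Im(1/z)\ge\Im b-1/2\ge1/2$, whence $z\notin\overline B(i,1)$, while $\cld(b)\subset\dmn$ automatically. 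Since $|b|\ge2\sqrt2$, Example~\ref{e:reg} gives that $\cld(b)$ is full, i.e.\ $\dmn_b=\dmn$; a second application of Lemma~\ref{l:image}~\eqref{le:image}, now with $\cld(b)\subset\dmn_{\bm a\bm v_k}$, then yields $\dmn_{\bm a\bm v_kb}=\dmn_b=\dmn$. For $\overline{\cld(\bm a\bm v_kb)}\subset\cld(\bm a)^\circ$, note that $\cld(\bm v_k)\subset\cld(3)$ and the elementary estimate $\cld(3)\subset(3+\dmn)^{-1}\subset\overline B(0,2/5)\subset\dmn^\circ$ give $\overline{\cld(\bm v_k)}\subset\dmn^\circ=\dmn_{\bm a}^\circ$; since $T_{\bm a}$ is a M\"obius map whose pole has modulus $>1$ and so lies outside $\overline\dmn$, it extends to a homeomorphism of $\overline\dmn$ onto $\overline{\cld(\bm a)}$ and, being open, carries $\dmn^\circ$ into $\cld(\bm a)^\circ$; hence $\overline{\cld(\bm a\bm v_kb)}\subset\overline{\cld(\bm a\bm v_k)}=T_{\bm a}\bigl(\overline{\cld(\bm v_k)}\bigr)\subset\cld(\bm a)^\circ$.

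For~\eqref{le:Cuvk'}: from $p/q=p(\bm a\bm v_k)/q(\bm a\bm v_k)=[0;\bm a\bm v_k]$, Lemma~\ref{l:vk}~\eqref{le:v=v'}, and~\eqref{eq:Tu}, I get $p/q=T_{\bm a}\bigl([0;\bm v_k]\bigr)=T_{\bm a}\bigl([0;\tilde{\bm v}_k]\bigr)=[0;\bm a\tilde{\bm v}_k]=T_{\bm a\tilde{\bm v}_k}(0)$. By fullness of $\bm a$ and Lemma~\ref{l:vk}~\eqref{le:Fv'}, $\dmn_{\bm a\tilde{\bm v}_k}=\dmn_{\tilde{\bm v}_k}=\dmn\cap\{z\colon|z+i|=1\}$, a set containing $0$; therefore $p/q=T_{\bm a\tilde{\bm v}_k}(0)\in T_{\bm a\tilde{\bm v}_k}\bigl(\dmn_{\bm a\tilde{\bm v}_k}\bigr)=\cld(\bm a\tilde{\bm v}_k)$.

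For~\eqref{le:uvdd}: part~\eqref{le:Cuvk'} shows the first $N:=n+4k+2$ HCF partial quotients of $p/q$ are $\bm a\tilde{\bm v}_k$, and since $T^N(p/q)=T_{\bm a\tilde{\bm v}_k}^{-1}(p/q)=0$ while $T^j(p/q)\ne0$ for $j<N$, the HCF of $p/q$ has length exactly $N$. Every $z\in\cld(\bm a\bm v_kb)$ (nonempty by~\eqref{le:uvbF}) satisfies $a_j(z)=(\bm a\bm v_k)_j$ for $1\le j\le N$, so $\dd(z,p/q)$ equals the number of positions $\le N$ at which $\bm a\bm v_k$ and $\bm a\tilde{\bm v}_k$ differ. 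Comparing entrywise, they agree on the $n$ entries of $\bm a$ and on the second entry ($=2i$) of each of the $k$ repeated length-$4$ blocks, and disagree on the two leading entries ($3,-2i$ versus $3+i,2i$) and on the remaining three entries ($-2,-2,-2i$ versus $-2+i,-2+i,2i$) of each block; this gives $\dd(z,p/q)=2+3k$, as claimed.
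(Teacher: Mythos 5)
Your proof is correct and follows essentially the same strategy as the paper's: use Lemma~\ref{l:image}~\eqref{le:image} to propagate prototype sets across the full prefix $\bm a$, Lemma~\ref{l:vk} for the explicit prototype sets of $\bm v_k$ and $\tilde{\bm v}_k$, and Example~\ref{e:reg} for fullness of $\cld(b)$; parts~\eqref{le:Cuvk'} and~\eqref{le:uvdd} are nearly identical to the paper's. The one place where you take a slightly different path is the compact containment $\overline{\cld(\bm a\bm v_kb)}\subset\cld(\bm a)^\circ$ in part~\eqref{le:uvbF}: the paper proves the stronger fact $\overline{\cld(b)}\subset\dmn_{\bm v_k}^\circ$ (using $\overline\dmn+b\subset(\dmn^\circ\setminus\overline B(i,1))^{-1}$, which exploits both $|b|\ge2\sqrt2$ and $\Im b\ge1$) and then pushes it through $T_{\bm a}\circ T_{\bm v_k}$, whereas you only establish the non-strict inclusion $\cld(b)\subset\dmn_{\bm v_k}$ (from $\Im(1/z)\ge1/2$) to get fullness, and instead obtain the closure containment from the separate elementary estimate $\overline{\cld(\bm v_k)}\subset\overline{\cld(3)}\subset\overline B(0,2/5)\subset\dmn^\circ$ pushed through $T_{\bm a}$ alone. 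Both routes are valid; yours decouples the fullness step from the compactness step at the small cost of the extra $\cld(3)$ estimate, which is a reasonable trade-off and arguably cleaner.
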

\begin{proof}
	(a) Let $T_b=(T|_{\cld(b)})^{-1}$, then $T_b(z)=(b+z)^{-1}$. Since $|b|\ge2\sqrt2$ and $\Im b\ge1$, one can check that
	\[ \overline\dmn+b\subset\bigl(\dmn^\circ\setminus\overline B(i,1)\bigr)^{-1}=\{z\in\C\colon\Im z>-1/2\} \setminus\bigcup_{z\in\{i,\pm1\}}\overline B(z,1), \]
	and so $\overline{T_b(\dmn)}\subset\bigl(\dmn\setminus\overline B(i,1)\bigr)^\circ=\dmn_{\bm v_k}^\circ$ by Lemma~\ref{l:vk}~\eqref{le:Fv}. Since $\cld(b)$ is full by Example~\ref{e:reg}, we have
	\begin{equation}\label{eq:b<vk}
		\overline{\cld(b)}=\overline{T_b(\dmn)}\subset\dmn_{\bm v_k}^\circ.
	\end{equation}
	Moreover, since $\bm a$ is full, we obtain by using Lemma~\ref{l:image}~\eqref{le:image} repeatedly that
	\[ \dmn_{\bm a\bm v_kb}=\dmn_{\bm v_kb}=\dmn_b=\dmn, \]
	which means that $\cld(\bm a\bm v_kb)$ is full.
	
	To prove $\overline{\cld(\bm a\bm v_kb)}\subset\cld(\bm a)^\circ$, let $T_{\bm a}$ and $T_{\bm v_k}$ be as in~\eqref{eq:Tu}. We have
	\[ \overline{\cld(\bm a\bm v_kb)}\subset T_{\bm a}\circ T_{\bm v_k}\overline{\cld(b)}\subset T_{\bm a}\circ T_{\bm v_k}(\dmn_{\bm v_k}^\circ)\subset T_{\bm a}(\dmn^\circ)=\cld(\bm a)^\circ. \]
	Here the second step follows from~\eqref{eq:b<vk}; the last from that $\cld(\bm a)$ is full.
	
	(b) Let $T_{\bm a\tilde{\bm v}_k}$ be as in~\eqref{eq:Tu}. We need three observations: $p/q=[0;\bm a\bm v_k]=[0;\bm a\tilde{\bm v}_k]$ (by Lemma~\ref{l:vk}~\eqref{le:v=v'}), $0\in\dmn_{\tilde{\bm v}_k}$ (by Lemma~\ref{l:vk}~\eqref{le:Fv'}) and $\dmn_{\tilde{\bm v}_k}=\dmn_{\bm a\tilde{\bm v}_k}$ (by Lemma~\ref{l:image}~\eqref{le:image} and the condition that $\bm a$ is full). It follows that
	\[ p/q=[0;\bm a\tilde{\bm v}_k]=T_{\bm a\tilde{\bm v}_k}(0)\in T_{\bm a\tilde{\bm v}_k}(\dmn_{\tilde{\bm v}_k})=T_{\bm a\tilde{\bm v}_k}(\dmn_{\bm a\tilde{\bm v}_k})=\cld(\bm a\tilde{\bm v}_k). \]
	
	(c) By~\eqref{le:Cuvk'}, $p/q\in\cld(\bm a\tilde{\bm v}_k)$. Hence, for $z\in\cld(\bm a\bm v_kb)$, $\dd(z,p/q)$ is exactly the number of different items of the two sequences $\bm a\bm v_k$ and $\bm a\tilde{\bm v}_k$, which is $3k+2$.
\end{proof}

\subsection{Auxiliary families of sequences} \label{ss:AF}

This subsection contains some useful lemmas which are needed in the proof of the lower bounds of dimensions of~$E(\psi)$.

\begin{lem}\label{l:I(r)}
	Given $0<r<1$, let
	\[ I(r)=\bigl\{b\in I\colon 1/r\le|b|\le2/r,\Im b\ge1\bigr\}. \]
	\begin{enumerate}[\upshape(a)]
		\item \label{le:CbSR} If $r\le\sqrt2/4$, then the cylinder~$\cld(b)$ is full for all $b\in I(r)$.
		\item \label{le:cardIr} $\lim_{r\to0}r^2\cdot\sharp I(r)=3\pi/2$. In particular, there exists a constant $r_0>0$ such that 
		\[ r^{-2}\le\sharp I(r)\le5r^{-2} \qquad\text{for all $r\le r_0$}. \]
	\end{enumerate}
\end{lem}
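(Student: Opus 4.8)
The two parts are essentially geometric/counting statements about the set $I(r)$ of Gaussian integers in an annulus with positive imaginary part, so the plan is to treat them separately but with the same underlying picture. For part~\eqref{le:CbSR}, recall from Example~\ref{e:reg} that a level-$1$ cylinder $\cld(b)$ is full precisely when $|b|\ge 2\sqrt2$. Every $b\in I(r)$ satisfies $|b|\ge 1/r$, so if $r\le\sqrt2/4$ then $1/r\ge 4/\sqrt2 = 2\sqrt2$, and fullness is immediate. This part is a one-line check once the right quotation is in place; the only thing to be careful about is matching the constant $\sqrt2/4$ with the threshold $2\sqrt2$ from Example~\ref{e:reg}.

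For part~\eqref{le:cardIr}, the plan is a standard lattice-point count. The set $I(r)$ consists of the points of $\Z[i]$ lying in the region $R(r)=\{w\in\C\colon 1/r\le|w|\le 2/r,\ \Im w\ge 1\}$ (the constraint $|b|\ge\sqrt2$ from the definition of $I$ is automatically implied once $1/r\ge\sqrt2$, i.e.\ for small $r$, so it can be ignored in the limit). Since $\Z[i]$ has covolume $1$, the number of lattice points in a nice region of area $A$ is $A+O(\text{perimeter})$; here the area of $R(r)$ is, up to the lower-order effect of the line $\Im w = 1$ which contributes $O(1/r)$, equal to half the area of the annulus $\{1/r\le|w|\le 2/r\}$, namely $\tfrac12\bigl(\pi(2/r)^2-\pi(1/r)^2\bigr)=\tfrac{3\pi}{2r^2}$. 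The perimeter of $R(r)$ is $O(1/r)$, so $\sharp I(r)=\tfrac{3\pi}{2r^2}+O(1/r)$, which gives $\lim_{r\to0}r^2\sharp I(r)=3\pi/2$. The ``in particular'' clause then follows: since $r^2\sharp I(r)\to 3\pi/2$ and $1<3\pi/2<5$, there is $r_0>0$ with $r^{-2}\le\sharp I(r)\le 5r^{-2}$ for all $r\le r_0$.

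The main (minor) obstacle is making the lattice-point estimate rigorous rather than heuristic: one should either invoke a standard result (Gauss's circle-type bound: the number of $\Z[i]$-points in a bounded convex-ish region equals its area plus an error controlled by the length of the boundary) or give the elementary ``unit squares centered at lattice points'' argument — each lattice point in $R(r)$ carries a unit square, these squares cover $R(r)$ up to a boundary layer of width $O(1)$, and conversely squares meeting $R(r)$ are contained in an $O(1)$-neighborhood of $R(r)$; comparing areas and using that both $R(r)$ and its $O(1)$-neighborhood have area $\tfrac{3\pi}{2r^2}+O(1/r)$ yields the claim. The region $R(r)$ is not convex (it is an annular sector cut by a horizontal line), but it is a finite union of convex pieces, or one can simply bound its boundary length directly by $O(1/r)$, so the standard estimate still applies. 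No genuine difficulty is expected beyond this bookkeeping.
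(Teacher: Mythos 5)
Your proof is correct and follows essentially the same route as the paper: part~\eqref{le:CbSR} is the same one-line reduction to Example~\ref{e:reg}, and for part~\eqref{le:cardIr} the paper likewise appeals to the Gauss circle problem ($N(R)=\pi R^2+O(R)$), writing $\sharp I(r)=\bigl(N(2/r)-N(1/r)\bigr)/2+O(1/r)$, which is the same area-plus-perimeter count you spell out. Your version merely unpacks the lattice-point estimate a bit more explicitly than the paper's citation.
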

\begin{proof}
	(a) If $r\le\sqrt2/4$, then $|b|\ge1/r\ge2\sqrt2$ for all $b\in I(r)$. According to Example~\ref{e:reg}, we know that $\cld(b)$ is full.
	
	(b) This is an alternative version of the famous Gauss circle problem in number theory, which asserts that the number of lattice points $N(R)$ inside the boundary of a circle of radius~$R$ with center at the origin is $\pi R^2+O(R)$, see for instance~\cite[Theorem~339]{HarWr08}. Clearly, $\sharp I(r)=\bigl(N(2/r)-N(1/r)\bigr)/2+O(1/r)$.
\end{proof}

Given $M>0$, let $I_M=\{z\in I\colon|z|\le M\}$. 
\begin{lem}\label{l:mea<M}
	There exists a constant $\cst_1>1$ with the following property. For all $M>\sqrt{\cst_1}$ and $n\ge1$, we have
	\begin{equation}\label{eq:meaRM>}
		\sum_{\bm u\in I_M^n}\lm\bigl(\cld(\bm u)\bigr)\ge(1-\cst_1/M^2)^n.
	\end{equation}
\end{lem}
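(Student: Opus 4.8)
The plan is to bound from below the measure taken up by cylinders with partial quotients of modulus at most $M$, by controlling the complementary ``loss'' at each step of the Hurwitz map. I would work with the transfer-type estimate for a single digit: for a fixed full (or regular) cylinder $\cld(\bm u)$ with prototype set $\dmn_{\bm u}$, Lemma~\ref{l:image}~\eqref{le:Fub} together with \eqref{eq:meaTA} shows that, for each $b \in I$ with $\cld(\bm u b)\ne\emptyset$,
\[
\lm\bigl(\cld(\bm u b)\bigr) = \lm\bigl(T_{\bm u}(\cld(b)\cap\dmn_{\bm u})\bigr) \asymp |q(\bm u)|^{-4}\,\lm\bigl(\cld(b)\cap\dmn_{\bm u}\bigr),
\]
and more usefully $\lm\bigl(\cld(\bm u b)\bigr)/\lm\bigl(\cld(\bm u)\bigr) \asymp \lm\bigl(\cld(b)\cap\dmn_{\bm u}\bigr)/\lm(\dmn_{\bm u})$ with absolute constants, since by Lemma~\ref{l:Ftype} there are only finitely many shapes of $\dmn_{\bm u}$, each of comparable measure. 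Summing over $b\in I$ with $|b|\le M$ and using $\lm(\dmn_{\bm u}\setminus\bigcup_{|b|>M}\cld(b))$, the whole problem reduces to the \emph{one-step} claim: there is an absolute $\cst_1>1$ so that for every regular prototype set $\dmn_{\bm u}$,
\[
\sum_{b\in I,\ |b|>M} \lm\bigl(\cld(b)\cap\dmn_{\bm u}\bigr) \le \frac{\cst_1}{M^2}\,\lm(\dmn_{\bm u}).
\]

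First I would prove this one-step claim. The set $\bigcup_{|b|>M}\cld(b)$ is contained in $\{z\in\dmn : |1/z| \ge M - 1\}$ — because $a_1(z)=[1/z]$ and $|[1/z]-1/z|<1/\sqrt2<1$ — i.e.\ in the disc $\overline B(0, (M-1)^{-1})$. Hence
\[
\sum_{|b|>M}\lm\bigl(\cld(b)\cap\dmn_{\bm u}\bigr) \le \lm\bigl(\overline B(0,(M-1)^{-1})\bigr) = \frac{\pi}{(M-1)^2},
\]
and since each $\dmn_{\bm u}$ contains an open square of side $1/2$ we have $\lm(\dmn_{\bm u})\ge 1/4$, so the left side is at most $\frac{4\pi}{(M-1)^2}\,\lm(\dmn_{\bm u})$. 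For $M$ large (say $M\ge 2$, so $M-1\ge M/2$) this is $\le \frac{16\pi}{M^2}\lm(\dmn_{\bm u})$; absorbing the $\asymp$-constants from \eqref{eq:meaTA} into a single $\cst_1$ gives the claim. Then for each regular $\bm u$,
\[
\sum_{b\in I_M}\lm\bigl(\cld(\bm u b)\bigr) \ge \bigl(1-\cst_1/M^2\bigr)\,\lm\bigl(\cld(\bm u)\bigr).
\]

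Next I would iterate. Summing over all regular $\bm u\in I_M^n$ (irregular cylinders contribute nothing past level one by Lemma~\ref{l:RCmeas}, and a sub-cylinder $\cld(\bm u b)$ is again handled via its own prototype set, noting $\cld(\bm u b)$ regular implies $\cld(\bm u)$ regular by Lemma~\ref{l:newRC}~\eqref{le:ur}) yields
\[
\sum_{\bm u'\in I_M^{n+1}}\lm\bigl(\cld(\bm u')\bigr) \ge \bigl(1-\cst_1/M^2\bigr)\sum_{\bm u\in I_M^n}\lm\bigl(\cld(\bm u)\bigr),
\]
and since $\sum_{\bm u\in I_M^1}\lm(\cld(\bm u)) \ge (1-\cst_1/M^2)\lm(\dmn) = (1-\cst_1/M^2)$ by the one-step claim applied to $\bm u=\varnothing$ (with $\dmn_\varnothing=\dmn$), an induction on $n$ gives \eqref{eq:meaRM>}.

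The main obstacle is getting the one-step inequality with a constant that is genuinely \emph{uniform} over all regular prototype shapes and does not degrade as the cylinder level grows — this is exactly where Lemma~\ref{l:Ftype} (finitely many shapes, all of measure $\asymp 1$ and all containing a fixed square) and the distortion estimate \eqref{eq:meaTA} (with its level-independent constant coming from $|T_{\bm u}'|\asymp|q(\bm u)|^{-2}$) are essential; without them the constant could a priori depend on $n$. The remaining bookkeeping — discarding irregular cylinders via Lemma~\ref{l:RCmeas}, and checking that sub-cylinders of regular cylinders are regular — is routine.
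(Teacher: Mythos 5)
Your proposal is correct and matches the paper's proof essentially step for step: both reduce \eqref{eq:meaRM>} to the one-step loss bound $\sum_{b\in I,\,|b|>M}\lm\bigl(\cld(\bm ub)\bigr)\le\cst_1\lm\bigl(\cld(\bm u)\bigr)/M^2$ using the bounded distortion estimate~\eqref{eq:meaTA} and the uniform geometry of regular prototype sets (Lemma~\ref{l:Ftype}, Lemma~\ref{l:diamea}), and then iterate with Lemma~\ref{l:RCmeas} handling irregular cylinders. The only cosmetic difference is that you obtain the containment $\bigcup_{|b|>M}\cld(b)\subset B\bigl(0,O(1/M)\bigr)$ from the elementary bound $|[1/z]-1/z|\le 1/\sqrt2$, whereas the paper invokes Lemma~\ref{l:props}~\eqref{le:z-pq}; your side restriction $M\ge 2$ is harmless since one is free to enlarge $\cst_1$ so that $M>\sqrt{\cst_1}$ forces $M\ge2$.
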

\begin{proof}
	We first prove that there exists a constant $\cst_1>1$ with the following property. For all sequences $\bm u$ (we allow $\bm u=\varnothing$) and all $M\ge1$,
	\begin{equation}\label{eq:meaub}
		\sum_{b\in I,|b|>M}\lm\bigl(\cld(\bm ub)\bigr)\le\cst_1\lm\bigl(\cld(\bm u)\bigr)/M^2.
	\end{equation}
	
	If $\bm u$ is irregular, Lemma~\ref{l:RCmeas} says $\lm\bigl(\cld(\bm u)\bigr)=0$. Both sides of~\eqref{eq:meaub} are equal to zero. Now suppose that $\bm u$ is regular, pick $z\in\cld\bigl(\bm ub\bigr)$, then $T^n(z)\in\cld(b)$. By Lemma~\ref{l:props}~\eqref{le:z-pq}, $|T^n(z)-1/b|\le|b|^{-2}$, and so $|T^n(z)|\le2/|b|<2/M$ for $|b|>M$. Hence, $\bigcup_{|b|>M}T^n\bigl(\cld(\bm ub)\bigr)\subset B(0,2/M)$. Let $T_{\bm u}$ be as in~\eqref{eq:Tu}, then
	\[ \bigcup_{|b|>M}\cld(\bm ub)\subset T_{\bm u}\bigl(B(0,2/M)\cap\dmn_{\bm u}\bigr). \]
	Combining this with~\eqref{eq:meaTA} and Lemma~\ref{l:diamea}, we obtain
	\[ \begin{split}
		\sum_{b\in I,|b|>M}\lm\bigl(\cld(\bm ub)\bigr) &\le \lm\bigl(T_{\bm u}(B(0,2/M)\cap\dmn_{\bm u})\bigr) \le c\frac{\lm\bigl(B(0,2/M)\cap\dmn_{\bm u})\bigr)}{|q(\bm u)|^4}\\ &\le c\frac{\lm\bigl(B(0,2/M))\bigr)}{|q(\bm u)|^4} \le\cst_1\frac{\lm\bigl(\cld(\bm u)\bigr)}{M^2}.
	\end{split} \]
	
	With this constant~$\cst_1$, we turn to prove~\eqref{eq:meaRM>} by induction on~$n$. For $n=0$, we adopt the convenience $I_M^0=\{\varnothing\}$ and $\cld(\varnothing)=\dmn$, then both sides of~\eqref{eq:meaRM>} are equal to~$1$. Now assume that this holds for $n-1$. By Lemma~\ref{l:RCmeas}, we have
	\begin{align*}
		\sum_{\bm u\in I_M^n}\lm\bigl(\cld(\bm u)\bigr) &= \sum_{\bm u\in I_M^{n-1}}\lm\bigl(\cld(\bm u)\bigr)-\sum_{\bm u\in I_M^{n-1}}\sum_{b\in I,|b|>M}\lm\bigl(\cld(\bm ub)\bigr)\\
		&\ge\sum_{\bm u\in I_M^{n-1}}\lm\bigl(\cld(\bm u)\bigr)-\sum_{\bm u\in I_M^{n-1}}\cst_1\lm\bigl(\cld(\bm u)\bigr)/M^2 \qquad(\text{by~\eqref{eq:meaub}})\\
		&=(1-\cst_1/M^2)\sum_{\bm u\in I_M^{n-1}}\lm\bigl(\cld(\bm u)\bigr)=(1-\cst_1/M^2)^n.
	\end{align*}
	Here we use the inductive hypothesis in the last equality.
\end{proof}

\begin{lem}\label{l:OMQ}
	Given $M,Q>0$, let $\cst_1>1$ be the constant in Lemma~\ref{l:mea<M} and
	\[ \Gamma_M(Q)=\biggl\{\bm u\in\bigcup_{j\ge1} I_M^j\colon\text{$\bm u$ is full and $|q(\bm u^-)|<Q\le |q(\bm u)|$}\biggr\}. \]
	If $M\ge12\cst_1$ and $Q\ge(M+1)^{5M}$, then $\sharp\Gamma_M(Q)\ge Q^{4-2/M}$.
\end{lem}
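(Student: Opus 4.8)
The plan is to count full sequences $\bm u\in\bigcup_j I_M^j$ with $|q(\bm u^-)|<Q\le|q(\bm u)|$ by a volume/measure argument, using that full cylinders carry a definite proportion of Lebesgue measure and that each such cylinder has diameter $\asymp|q(\bm u)|^{-2}\asymp Q^{-2}$. First I would fix a large level $n$ to be chosen at the end (of size roughly $(\log Q)/(\log(M+1))$, so that every $\bm u\in I_M^n$ satisfies $|q(\bm u)|<Q$; indeed $|q(\bm u)|<(M+1)^n$ by Lemma~\ref{l:props}\eqref{le:|q-|} applied $n$ times). For each $\bm u\in I_M^n$ that is full, extend it by appending letters from $I_M$ one at a time; by Lemma~\ref{l:newRC}\eqref{le:uvsr} every such extension stays full as long as we only append full single letters, and by Example~\ref{e:reg} a letter $b\in I_M$ with $|b|\ge2\sqrt2$ is full, so there are plenty of admissible letters. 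Walking down this tree, $|q|$ grows by a factor between $|b|-1$ and $|b|+1$ at each step (Lemma~\ref{l:props}\eqref{le:|q-|}), hence strictly increases and eventually crosses the threshold $Q$; the first sequence $\bm w\supseteq\bm u$ along the branch with $|q(\bm w)|\ge Q$ lies in $\Gamma_M(Q)$. This shows every full $\bm u\in I_M^n$ has at least one descendant in $\Gamma_M(Q)$.

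Next I would turn this into a lower bound on $\sharp\Gamma_M(Q)$ via measure. Each $\bm w\in\Gamma_M(Q)$ is full, hence regular, so by Lemma~\ref{l:diamea} $\lm(\cld(\bm w))\le\pi|q(\bm w)|^{-4}$; moreover $|q(\bm w)|=|q(\bm w^-)\,w_{|\bm w|}+q(\bm w^{--})|<(M+1)|q(\bm w^-)|<(M+1)Q$ by the defining inequality and Lemma~\ref{l:props}\eqref{le:|q-|}, so $\lm(\cld(\bm w))\le\pi(M+1)^4Q^{-4}$. On the other hand the cylinders $\{\cld(\bm w):\bm w\in\Gamma_M(Q)\}$ cover (up to a Lebesgue-null set, by Lemma~\ref{l:RCmeas}) all the full cylinders $\cld(\bm u)$ with $\bm u\in I_M^n$: every point of such a $\cld(\bm u)$, except the null set where the branch terminates in $\Q(i)$, has its HCF expansion passing through a unique $\bm w\in\Gamma_M(Q)$ with $\bm u\preceq\bm w$. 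Hence
\[
\sum_{\bm w\in\Gamma_M(Q)}\lm\bigl(\cld(\bm w)\bigr)\ \ge\ \sum_{\substack{\bm u\in I_M^n\\ \bm u\ \text{full}}}\lm\bigl(\cld(\bm u)\bigr).
\]

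Now I would bound the right-hand side from below. Lemma~\ref{l:mea<M} gives $\sum_{\bm u\in I_M^n}\lm(\cld(\bm u))\ge(1-\cst_1/M^2)^n$, and the contribution of the \emph{non-full} $\bm u\in I_M^n$ must be subtracted. Here I expect the main obstacle: controlling the total measure of non-full (but still regular) level-$n$ cylinders with bounded partial quotients. The idea is that by Lemma~\ref{l:newRC}\eqref{le:ubF}, from any regular cylinder one reaches a full cylinder by appending a single letter $b\in\{\pm2\pm2i\}\subset I_M$ (all of norm $2\sqrt2\le M$), and the measure ratio $\lm(\cld(\bm ub))/\lm(\cld(\bm u))\asymp|b|^{-4}$ is bounded below by a fixed constant $\delta>0$; iterating over blocks of bounded length one shows the proportion of measure carried by full level-$n$ cylinders is at least $(1-(1-\delta))^{\lfloor n/L\rfloor}$ for a fixed $L$, or more simply that $\sum_{\bm u\ \text{full}}\lm(\cld(\bm u))\ge c\sum_{\bm u\in I_M^n}\lm(\cld(\bm u))$ for a constant $c=c(M)$ bounded away from $0$ when $M$ is large — and being more careful, $\ge(1-\cst_1/M^2)^{Cn}$ for an absolute constant $C$. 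Combining, for full $\bm u$ the total measure is $\ge(1-\cst_1/M^2)^{Cn}\ge(1-12\cst_1/M^2)^{n}$ when $M\ge12\cst_1$ (using $1-12\cst_1/M^2\le(1-\cst_1/M^2)^{12}$ and absorbing $C$), hence
\[
\sharp\Gamma_M(Q)\ \ge\ \frac{(1-12\cst_1/M^2)^{n}}{\pi(M+1)^4Q^{-4}}\ =\ \frac{Q^4\,(1-12\cst_1/M^2)^{n}}{\pi(M+1)^4}.
\]
Finally I choose $n=\lfloor(\log Q)/\log(M+1)\rfloor$, so $(M+1)^n\le Q$ (guaranteeing $|q(\bm u)|<Q$ for $\bm u\in I_M^n$ as required above) and $n\le(\log Q)/\log(M+1)$; then $(1-12\cst_1/M^2)^n\ge\exp(-n\cdot 24\cst_1/M^2)\ge Q^{-24\cst_1/(M^2\log(M+1))}$, and using $M\ge12\cst_1$ one checks $24\cst_1/(M^2\log(M+1))$ together with the $\pi(M+1)^4$ and the $Q^{-\varepsilon}$ losses is $\le 2/M$ for $M$ large; the hypothesis $Q\ge(M+1)^{5M}$ is exactly what makes the polynomial factor $\pi(M+1)^4$ and the rounding in $n$ negligible, i.e. absorbable into one more factor of $Q^{-O(1/M)}$. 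This yields $\sharp\Gamma_M(Q)\ge Q^{4-2/M}$, as claimed. The delicate points are thus (i) the lower bound on the full-cylinder measure at level $n$ with bounded partial quotients, and (ii) the bookkeeping that turns all the multiplicative constants and the choice of $n$ into the clean exponent $4-2/M$ under the assumptions $M\ge12\cst_1$ and $Q\ge(M+1)^{5M}$.
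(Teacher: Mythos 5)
The high-level plan (fix a level $n$ of logarithmic size, use Lemma~\ref{l:mea<M} for a measure lower bound, divide by the per-cylinder area $\asymp Q^{-4}$) matches the paper's, but the two central steps are flawed, and fixing them requires a different device than the one you propose.

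First, the covering inequality
\[ \sum_{\bm w\in\Gamma_M(Q)}\lm\bigl(\cld(\bm w)\bigr)\ \ge\ \sum_{\substack{\bm u\in I_M^n,\ \bm u\ \text{full}}}\lm\bigl(\cld(\bm u)\bigr) \]
is not justified. What your tree argument actually produces is \emph{one} descendant $\bm w\in\Gamma_M(Q)$ of each full $\bm u\in I_M^n$ along a specially chosen branch (where you always append a full letter $b$). For a generic point $z\in\cld(\bm u)$, the unique prefix $\bm w$ of the HCF word of $z$ with $|q(\bm w^-)|<Q\le|q(\bm w)|$ has no reason to be full: fullness does not pass up to prefixes nor down to extensions. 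So $\bigcup_{\bm w\in\Gamma_M(Q)}\cld(\bm w)$ need not contain $\cld(\bm u)$, and the displayed inequality can fail badly.

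Second, the asserted lower bound $\sum_{\bm u\ \text{full},\ \bm u\in I_M^n}\lm(\cld(\bm u))\ge(1-\cst_1/M^2)^{Cn}$ with an \emph{absolute} constant $C$ is false. Already for $n=1$ the measure fraction of full cylinders is a fixed constant strictly below $1$ (by Example~\ref{e:reg} only the $b$ with $|b|\ge 2\sqrt2$ are full, and the remaining cylinders have positive area independent of $M$), whereas your proposed bound tends to $1$ as $M\to\infty$. You would really need a lower bound that decays like $c_0^n$ for some fixed $c_0<1$, and that decay is fatal once $n\asymp\log Q$: it contributes a genuine $Q^{-c}$ loss that $Q\ge(M+1)^{5M}$ cannot absorb.

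The paper sidesteps both problems at once by replacing ``full'' with ``regular'' at the threshold. It introduces
\[ \Gamma_M^-(Q)=\Bigl\{\bm u\in\bigcup_{j\ge1}I_M^j\colon\text{$\bm u$ regular and }|q(\bm u^-)|<Q/(M+1)\le|q(\bm u)|\Bigr\} \]
and proves (a) $\sharp\Gamma_M(Q)\ge\sharp\Gamma_M^-(Q)$, via the injection $\bm u\mapsto\bm ub^{*k}$ that appends a suitable $b\in\{\pm2\pm2i\}\subset I_M$ enough times to become full and cross the threshold $Q$; and (b) every regular $\bm v\in I_M^n$ with $n=\lfloor6\log Q\rfloor$ has $\cld(\bm v)\subset\cld(\bm u)$ for some $\bm u\in\Gamma_M^-(Q)$, because $|q(\bm v)|>Q$ forces a threshold-crossing prefix, and that prefix is automatically regular by Lemma~\ref{l:newRC}\eqref{le:ur}. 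Since irregular cylinders are Lebesgue-null (Lemma~\ref{l:RCmeas}), (b) combined with Lemma~\ref{l:mea<M} gives the measure lower bound without ever needing to count full cylinders directly, and (a) transports the count from $\Gamma_M^-(Q)$ to $\Gamma_M(Q)$. This is the key idea missing from your argument: regularity, unlike fullness, passes to prefixes, which is exactly what makes the covering claim true.
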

\begin{proof}
	The idea is to count the number of prefix of sequences in~$\Gamma_M(Q)$ instead of counting them directly. Since the prefix of a full sequence may be regular, let
	\[ \Gamma_M^-(Q)=\biggl\{\bm u\in\bigcup_{j\ge1} I_M^j\colon\text{$\bm u$ is \emph{regular} and $|q(\bm u^-)|<\frac{Q}{M+1}\le |q(\bm u)|$}\biggr\}. \]
	We claim that
	\begin{enumerate}[\upshape(a)]
		\item $\sharp\Gamma_M(Q)\ge\sharp\Gamma_M^-(Q)$.
		\item Let $n=\lfloor6\log Q\rfloor$. If $\bm v\in I_M^n$ is regular, then $\cld(\bm v)\subset\bigcup_{\bm u\in\Gamma_M^-(Q)}\cld(\bm u)$.
	\end{enumerate}
	
	Since every irregular cylinder has Lebesgue measure zero (by Lemma~\ref{l:RCmeas}), it follows from Claim~(b) and Lemma~\ref{l:mea<M} that
	\begin{align*}
		\lm\biggl(\bigcup_{\bm u\in\Gamma_M^-(Q)}\cld(\bm u)\biggr) &\ge\lm\biggl(\bigcup_{\bm u\in I_M^n}\cld(\bm u)\biggr) \ge(1-\cst_1/M^2)^{6\log Q}\\ 
		&=Q^{6\log(1-\cst_1/M^2)} >Q^{-12\cst_1/M^2}\ge Q^{-1/M}.
	\end{align*}
	The last two inequalities follow from the fact $\log(1-x)>-2x$ if $0<x<1/2$ and the condition $M\ge12\cst_1$. On the other hand, Lemma~\ref{l:diamea} gives that, for all $\bm u\in\Gamma_M^-(Q)$,
	\[ \lm\bigl(\cld(\bm u)\bigr) \le\pi|q(\bm u)|^{-4} \le\pi(M+1)^4/Q^4 <Q^{-4+1/M}. \]
	The last inequality follows from $\log_Q\pi(M+1)^4<5\log_Q(M+1)\le1/M$. The two bounds above together with Claim~(a) gives
	\[ \sharp\Gamma_M(Q)\ge\sharp\Gamma_M^-(Q) \ge\frac{\lm\bigl(\bigcup_{\bm u\in\Gamma_M^-(Q)}\cld(\bm u)\bigr)}{\max_{\bm u\in\Gamma_M^-(Q)}\lm\bigl(\cld(\bm u)\bigr)} \ge\frac{Q^{-1/M}}{Q^{-4+1/M}}=Q^{4-2/M}. \]
	
	It remains to prove the two claims. For Claim~(a), given $\bm u\in\Gamma_M^-(Q)$, we shall prove that $\bm u$ is a prefix of some $\bm u'\in\Gamma_M(Q)$. Since $\bm u$ is regular, by Lemma~\ref{l:newRC}~\eqref{le:ubF}, \eqref{le:uvsr} and Example~\ref{e:reg}, there is $b\in\{\pm2+\pm2i\}$ with $|b|\le M$ such that the sequence 
	\[ \bm ub^{*k}:=(\bm u,\underbrace{b,\dots,b}_k)\ \text{is full for all $k\ge1$.} \]
	By Lemma~\ref{l:props}~\eqref{le:|q-|}, $|q(\bm u)|<(M+1)|q(\bm u^-)|<Q$. Combining this and Lemma~\ref{l:props}~\eqref{le:1<q}, one sees that there exists $k\ge1$ such that $|q(\bm ub^{*(k-1)})|<Q\le|q(\bm ub^{*k})|$. Hence, $\bm u'=\bm ub^{*k}\in\Gamma_M(Q)$. Moreover, distinct $\bm u,\bm v\in\Gamma_M^-(Q)$ yield distinct $\bm u',\bm v'\in\Gamma_M(Q)$, since 
	\[ \cld(\bm u')\cap\cld(\bm v')\subset\cld(\bm u)\cap\cld(\bm v)=\emptyset. \]
	This completes the proof of Claim~(a).
	
	For Claim~(b), given $\bm v=(v_1,\dots,v_n)\in I_M^n$ being regular, we shall prove that $\cld(\bm v)\subset\cld(\bm u)$ for some $\bm u\in\Gamma_M^-(Q)$. By Lemma~\ref{l:props}~\eqref{le:q>q},
	\[ |q(\bm v)|\ge\phi^{\lfloor n/2\rfloor} \ge\phi^{(6\log Q-3)/2} \ge\phi^{(5\log Q)/2} >e^{\log Q}=Q. \]
	By Lemma~\ref{l:props}~\eqref{le:1<q}, there is $1<k\le n$ such that
	\[ |q(v_1,\dots,v_{k-1})|<Q/(M+1)\le|q(v_1,\dots,v_k)|. \]
	By Lemma~\ref{l:newRC}~\eqref{le:ur}, $\bm u=(v_1,\dots,v_k)$ is regular, and so $\bm u\in\Gamma_M^-(Q)$. Since $\cld(\bm v)\subset\cld(\bm u)$, Claim~(b) is proved.
\end{proof}

Let $\cst_1>1$ be the constant in Lemma~\ref{l:mea<M} and $\Gamma_M(Q)$ as in Lemma~\ref{l:OMQ}. 
\begin{lem}\label{l:CaQ}
	Suppose that $M\ge12\cst_1$ and $Q>0$. Given a sequence $\bm w$, let
	\[ \Gamma_M^{\bm w}(Q)=\bigl\{\bm b\colon\bm{wb}\in\Gamma_M(Q)\bigr\}. \]
	Then
	\[ \sharp\Gamma_M^{\bm w}(Q)\le(M+1)^{24M}|q(\bm w)|^{-4+2/M}\cdot \sharp\Gamma_M(Q). \]
\end{lem}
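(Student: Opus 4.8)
The plan is to combine a Lebesgue‑measure (volume) upper bound for $\sharp\Gamma_M^{\bm w}(Q)$ with a ``super‑multiplicativity'' lower bound for $\sharp\Gamma_M(Q)$, after isolating the case of small $|q(\bm w)|$, where the trivial inclusion $\{\bm{wb}\colon\bm b\in\Gamma_M^{\bm w}(Q)\}\subset\Gamma_M(Q)$ (which is injective) already gives $\sharp\Gamma_M^{\bm w}(Q)\le\sharp\Gamma_M(Q)$. Throughout, $\bm w\in I_M^{*}$ and $\bm w$ is regular whenever $\Gamma_M^{\bm w}(Q)\ne\emptyset$, being a prefix of the full word $\bm{wb}$.

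\emph{Volume bound.} For distinct $\bm b,\bm b'\in\Gamma_M^{\bm w}(Q)$ neither $\bm{wb}$ nor $\bm{wb}'$ is a prefix of the other, since a proper prefix $\bm u$ of $\bm{wb}'$ satisfies $|q(\bm u)|\le|q((\bm{wb}')^-)|<Q\le|q(\bm{wb})|$; hence the cylinders $\cld(\bm{wb})$ are pairwise disjoint up to $\Q(i)$ and lie in $\cld(\bm w)$. Each $\bm{wb}$ is full, hence regular, so Lemma~\ref{l:diamea} and Lemma~\ref{l:props}(f) give $\lm(\cld(\bm{wb}))\ge\pi\cst_0|q(\bm{wb})|^{-4}\ge\pi\cst_0\big((M+1)Q\big)^{-4}$, while $\lm(\cld(\bm w))\le\pi|q(\bm w)|^{-4}$ by Lemma~\ref{l:props}(c). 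Summing yields
\[ \sharp\Gamma_M^{\bm w}(Q)\le\cst_0^{-1}(M+1)^4Q^4|q(\bm w)|^{-4}. \]
Running the same disjointness argument \emph{inside} each cylinder $\cld(\bm{wb}')$, where $\bm b'$ ranges over the (regular) level‑$Q_1$ prefixes of the $\bm b$'s with $Q_1:=Q/(3|q(\bm w)|)$, shows that each $\bm b'$ is a prefix of at most $\cst_0^{-1}15^4(M+1)^4$ of the $\bm b$'s; and any regular sequence at level $Q_1$ extends injectively to a full one at level $(M+1)^2Q_1$ by appending copies of a suitable element of $\{\pm2\pm2i\}$ (Lemma~\ref{l:newRC}(b),(d) and Example~\ref{e:reg}). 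Hence, provided $Q>3|q(\bm w)|$ so that such prefixes exist,
\[ \sharp\Gamma_M^{\bm w}(Q)\le\cst_0^{-1}15^4(M+1)^4\,\sharp\Gamma_M(Q_2),\qquad Q_2:=(M+1)^2Q/(3|q(\bm w)|). \]

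\emph{Super‑multiplicativity and conclusion.} For full $\bm a\in\Gamma_M(Q_a)$ and full $\bm d\in\Gamma_M(Q_b)$, the word $\bm{ad}$ is full (Lemma~\ref{l:newRC}(d)) with $|q(\bm{ad})|\asymp|q(\bm a)||q(\bm d)|$ (Lemma~\ref{l:props}(g)); taking $Q_a=|q(\bm w)|$, $Q_b=Q_2$ one checks $|q(\bm{ad})|\in[Q,(M+1)^4Q)$ and $|q((\bm{ad})^-)|<(M+1)^3Q$, so $\bm{ad}\in\bigcup_{Q/(M+1)<Q'\le(M+1)^3Q}\Gamma_M(Q')$. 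The map $(\bm a,\bm d)\mapsto\bm{ad}$ is injective because $|q(\cdot)|$ is strictly increasing along prefixes (Lemma~\ref{l:props}(d)), making the level‑$Q_a$ prefix of $\bm{ad}$ unique; and the union above is controlled: every $\bm u$ in it has a regular level‑$Q/(M+1)$ prefix $\bm u^{\flat}$ with $\bm u=\bm u^{\flat}\bm c$ and $|q(\bm c)|$ below a fixed power of $M+1$, so the union has cardinality at most $(M+1)^{O(1)}$ times the number of regular level‑$Q/(M+1)$ sequences, which in turn is $\le\sharp\Gamma_M(Q)$ by the ``regular extends to full'' injection, now landing exactly at level $(M+1)\cdot Q/(M+1)=Q$. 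Thus $\sharp\Gamma_M(|q(\bm w)|)\,\sharp\Gamma_M(Q_2)\le(M+1)^{O(1)}\sharp\Gamma_M(Q)$. Now dichotomize: if $|q(\bm w)|\le(M+1)^{6M}$ then $(M+1)^{24M}|q(\bm w)|^{-4+2/M}\ge(M+1)^{12}\ge1$ and $\sharp\Gamma_M^{\bm w}(Q)\le\sharp\Gamma_M(Q)$ suffices. Otherwise $|q(\bm w)|>(M+1)^{6M}$, and $\Gamma_M^{\bm w}(Q)\ne\emptyset$ forces $Q>|q(\bm w)|/4>(M+1)^{5M}$ (use $|q(\bm w)|<Q$ when the witness $\bm{wb}$ has $|\bm b|=1$, and Lemma~\ref{l:props}(g) when $|\bm b|\ge2$), so Lemma~\ref{l:OMQ} applies to both $Q$ and $|q(\bm w)|$: for $Q\le3|q(\bm w)|$ the volume bound together with $\sharp\Gamma_M(Q)\ge Q^{4-2/M}\ge(|q(\bm w)|/4)^{4-2/M}$ already gives the claim since $Q/|q(\bm w)|$ is bounded; for $Q>3|q(\bm w)|$, feed $\sharp\Gamma_M(|q(\bm w)|)\ge|q(\bm w)|^{4-2/M}$ into the second volume bound and the super‑multiplicativity estimate to obtain $\sharp\Gamma_M^{\bm w}(Q)\le(M+1)^{O(1)}|q(\bm w)|^{-4+2/M}\sharp\Gamma_M(Q)$, the accumulated constant being comfortably $\le(M+1)^{24M}$ once $M\ge12\cst_1$.

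I expect the main obstacle to be the bookkeeping of the powers of $M+1$ in the super‑multiplicativity step — in particular the auxiliary estimate that there are only polynomially‑in‑$(M+1)$ (at worst $(M+1)^{O(\log M)}$) many sequences $\bm c\in I_M^{*}$ with $|q(\bm c)|$ below a prescribed power of $M+1$, which is what lets one bound $\sharp\bigl(\bigcup_{Q'}\Gamma_M(Q')\bigr)$ over a bounded multiplicative window of $Q'$ around $Q$ by a $(M+1)^{O(1)}$‑multiple of $\sharp\Gamma_M(Q)$; with that in hand, the rest is a careful assembly of Lemmas~\ref{l:props}, \ref{l:diamea}, \ref{l:newRC} and~\ref{l:OMQ}.
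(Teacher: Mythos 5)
Your proposal takes a genuinely different route from the paper's, and while the broad architecture is sound, it is considerably more elaborate and leaves a real gap in the constant bookkeeping that the paper's cleaner argument avoids entirely. The paper does not use any Lebesgue‑measure (volume) estimate at this stage. Instead, after the same initial dichotomy $|q(\bm w)|\le(M+1)^{6M}$ vs.\ $|q(\bm w)|>(M+1)^{6M}$, it sets $Q'=|q(\bm w)|/(15(M+1)^2)$ and proves in one stroke a \emph{single} injection
\[ \Gamma_M^{\bm w}(Q)\times\Gamma_M(Q')\hookrightarrow\Gamma_M(Q),\qquad(\bm b,\bm c)\mapsto\bm b\bm c\,3^{*k}, \]
where $k$ is chosen so that $|q(\bm b\bm c\,3^{*(k-1)})|<Q\le|q(\bm b\bm c\,3^{*k})|$. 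The factor $15(M+1)^2$ is calibrated precisely so that, via Lemma~\ref{l:props}~\eqref{le:|q-|} and~\eqref{le:|qq|}, one has $|q(\bm b)|<5(M+1)Q/|q(\bm w)|$, $|q(\bm c)|<|q(\bm w)|/(15(M+1))$, hence $|q(\bm{bc})|<3|q(\bm b)q(\bm c)|<Q$, so the $3^{*k}$ padding always lands exactly in $\Gamma_M(Q)$. Disjointness of the cylinders $\cld(\bm{bc})$ gives injectivity. Lemma~\ref{l:OMQ} then lower‑bounds $\sharp\Gamma_M(Q')\ge|q(\bm w)|^{4-2/M}/(M+1)^{24M}$, and the lemma follows.

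Your argument instead decouples this into (i) two volume bounds for $\sharp\Gamma_M^{\bm w}(Q)$, (ii) a general super‑multiplicativity $\sharp\Gamma_M(|q(\bm w)|)\cdot\sharp\Gamma_M(Q_2)\lesssim\sharp\Gamma_M(Q)$ via the concatenation $(\bm a,\bm d)\mapsto\bm{ad}$, and (iii) Lemma~\ref{l:OMQ}. The substantive issue is that $\bm{ad}$ does \emph{not} land in $\Gamma_M(Q)$ itself but only in some $\Gamma_M(Q')$ with $Q'$ in a bounded multiplicative window around $Q$; you then have to collapse $\bigcup_{Q'}\Gamma_M(Q')$ back to $\Gamma_M(Q)$ by factoring each $\bm u$ as $\bm u^\flat\bm c$ with $\bm u^\flat$ a regular level‑$Q/(M+1)$ prefix and $|q(\bm c)|$ bounded by a fixed power of $M+1$. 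Counting the admissible tails $\bm c\in\bigcup_j I_M^j$ with $|q(\bm c)|<5(M+1)^5$, say, costs $(\sharp I_M)^{O(\log M)}=(M+1)^{O(\log M)}$, not $(M+1)^{O(1)}$ as written in the middle of your argument — you acknowledge this at the end, but you never actually verify that the accumulated exponent stays below $24M$ for all admissible $M\ge12\cst_1$. Since $\cst_1$ is not explicitly known, ``comfortably $\le(M+1)^{24M}$'' is an assertion rather than a proof. This is exactly the bookkeeping the paper's injection sidesteps: by choosing $Q'$ so that $|q(\bm{bc})|<Q$, the padding $3^{*k}$ hits $\Gamma_M(Q)$ on the nose, and no window‑collapsing multiplicity estimate is needed. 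So your approach is plausible but not complete as stated; the missing piece is a concrete bound for the number of short tails $\bm c$, together with a check that it stays within the stated $(M+1)^{24M}$.
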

\begin{proof}
	We can assume that $|q(\bm w)|>(M+1)^{6M}$, for otherwise
	\[ \sharp\Gamma_M^{\bm w}(Q)\le\sharp\Gamma_M(Q) <(M+1)^{24M}|q(\bm w)|^{-4+2/M}\cdot \sharp\Gamma_M(Q). \]
	Write $Q'=\frac{|q(\bm w)|}{15(M+1)^2}>\frac{(M+1)^{6M}}{15(M+1)^2}>(M+1)^{5M}$, Lemma~\ref{l:OMQ} gives
	\begin{equation}\label{eq:Q'/Q}
		\sharp\Gamma_M(Q') \ge\Bigl(\frac{|q(\bm w)|}{15(M+1)^2}\Bigr)^{4-2/M} >\frac{|q(\bm w)|^{4-2/M}}{(M+1)^{24M}}.
	\end{equation}
	We claim that
	\begin{equation}\label{eq:Q'Q}
		\sharp\Gamma_M^{\bm w}(Q)\cdot\sharp\Gamma_M(Q') \le\sharp\Gamma_M(Q).
	\end{equation}
	Clearly, the inequality~\eqref{eq:Q'Q} together with~\eqref{eq:Q'/Q} completes the proof.
	
	It remains to prove~\eqref{eq:Q'Q}. Assume that the left side of~\eqref{eq:Q'Q} is greater than zero, for otherwise it is trivial. This enables us to pick $\bm b\in\Gamma_M^{\bm w}(Q)$ and $\bm c\in\Gamma_M(Q')$. By Lemma~\ref{l:newRC}~\eqref{le:usr}, $\bm b$ is full. By the definition of~$\Gamma_M(Q')$, so is~$\bm c$. By Lemma~\ref{l:newRC}~\eqref{le:uvsr} and Example~\ref{e:reg}, the sequence
	\begin{equation}\label{eq:bc3}
		\bm b\bm c3^{*k}:=(\bm b\bm c,\underbrace{3,\dots,3}_k)\ \text{is full for all $k\ge1$}.
	\end{equation}
	
	By Lemma~\ref{l:props}~\eqref{le:|q-|}, \eqref{le:|qq|} and the definition of~$\Gamma_M^{\bm w}(Q)$,
	\[ |q(\bm b)|<\frac{5|q(\bm{wb})|}{|q(\bm w)|}<\frac{5(M+1)|q(\bm{wb}^-)|}{|q(\bm w)|}<\frac{5(M+1)Q}{|q(\bm w)|}. \]
	By Lemma~\ref{l:props}~\eqref{le:|q-|} and the definition of~$\Gamma_M(Q')$, 
	\[ |q(\bm c)|<(M+1)|q(\bm c^-)|<(M+1)Q'=\frac{|q(\bm w)|}{15(M+1)}. \]
	Combing the two upper bounds and Lemma~\ref{l:props}~\eqref{le:|qq|}, we have 
	\[ |q(\bm{bc})|<3|q(\bm b)q(\bm c)|<Q. \]
	By Lemma~\ref{l:props}~\eqref{le:1<q}, there exists $k\ge1$ such that
	\[ |q(\bm{bc}3^{*(k-1)})|<Q\le|q(\bm{bc}3^{*k})|. \]
	This together with~\eqref{eq:bc3} gives $\bm{bc}3^{*k}\in\Gamma_M(Q)$. Moreover, distinct $(\bm b,\bm c)$, $(\bm b',\bm c')$ yield distinct $\bm b\bm c3^{*k},\bm b'\bm c'3^{*k'}\in\Gamma_M(Q)$, since
	\[ \cld(\bm b\bm c3^{*k})\cap\cld(\bm b'\bm c'3^{*k'}) \subset\cld(\bm b\bm c)\cap\cld(\bm b'\bm c')=\emptyset. \]
	Therefore, the inequality~\eqref{eq:Q'Q} holds.
\end{proof}

\section{Proof of Theorems~\ref{t:result} and~\ref{t:Wpsi}} \label{sec:proof}

It is worth noting that the case $\lambda(\psi)\le2$ can be derived from the case $\lambda(\psi)>2$. To see this, assuming $\lambda(\psi)\le2$, we consider the function $\psi_\epsilon(x):=x^{\lambda(\psi)-2-\epsilon}\psi(x)$ instead of~$\psi(x)$. For all $\epsilon>0$, the function $\psi_\epsilon$ is also nonincreasing and $\lambda(\psi_\epsilon)=2+\epsilon>2$. Moreover, we have $E(\psi_\epsilon)\subset E(\psi)$, since $\psi_\epsilon(x)<\psi(x)$ for $x>1$. If Theorem~\ref{t:result} holds for $\psi_\epsilon$, it follows that, for all $\epsilon>0$,
\[ \hdim E(\psi)\ge\hdim E(\psi_\epsilon)=4/(2+\epsilon). \]
Letting $\epsilon\to0$, we have $\pdim W(\psi)=\hdim W(\psi)=\pdim E(\psi)=\hdim E(\psi)=2$ since $E(\psi)\subset W(\psi)\subset\C$. This completes the proof of Theorems~\ref{t:result} and~\ref{t:Wpsi} for the case $\lambda(\psi)\le2$.

From now on, fix $\psi$ with $\lambda(\psi)>2$ and denote $\lambda(\psi)$ briefly by~$\lambda$. We will prove Theorems~\ref{t:result} and~\ref{t:Wpsi} by obtaining the upper bounds for dimensions of~$W(\psi)$ (see Section~\ref{ss:UB}) and lower bounds for dimensions of~$E(\psi)$ (see Section~\ref{ss:LB}).

\subsection{Upper bounds for dimensions of $W(\psi)$} \label{ss:UB}

Clearly, $\pdim W(\psi)\le2$. 

For Hausdorff dimension, let $\|z\|=\max\{|\Re z|,|\Im z|\}$. One can check that
\[ W(\psi)\subset\bigcap_{n\ge1}\bigcup_{k\ge n}\bigcup_{\substack{p,q\in\Z[i]\\ \|p\|\le\|q\|=k}}\overline B(p/q,\psi(|q|)). \]
Note that, for $k\ge1$,
\[ \sharp\{(p,q)\in\Z[i]\colon\|p\|\le\|q\|=k\}=8k(2k+1)^2, \] 
since $ \sharp\{q\in\Z[i]\colon\|q\|=k\}=8k $ and $ \sharp\{p\in\Z[i]\colon\|p\|\le k\}=(2k+1)^2 $.
Pick $\epsilon>0$ sufficiently small, let $s=\frac{4+\epsilon}{\lambda-\epsilon}$. Since $\psi(|q|)\le|q|^{-\lambda+\epsilon}\le(\|q\|)^{-\lambda+\epsilon}$ when $|q|$ large enough, we have
\[ \hdm^s\bigl(W(\psi)\bigr) \le\sum_{\text{$k$ large enough}}\sum_{\substack{p,q\in\Z[i]\\ \|p\|\le\|q\|=k}} \bigl(2\psi(|q|)\bigr)^s \le c\cdot\sum_{k\ge 1}k^3 (k^{-\lambda+\epsilon})^{\frac{4+\epsilon}{\lambda-\epsilon}}<\infty, \]
where $c>0$ is a constant. This implies $\hdim W(\psi)\le\frac{4+\epsilon}{\lambda-\epsilon}$. Let $\epsilon\to0$, we obtain
\begin{equation}\label{eq:upbd}
	\hdim W(\psi)\le4/\lambda.
\end{equation} 

\subsection{Auxiliary parameters and geometric bounds} \label{ss:para}

Recall that we assume $\lambda>2$. Fix $\epsilon>0$ sufficiently small (e.g., $100\epsilon<\min(\lambda-2,\lambda^{-1})$). Fix $M=\max(12\cst_1,2\epsilon^{-1})$, where $\cst_1>1$ is the constant in Lemma~\ref{l:mea<M}. Let $(n_k)_{k\ge0}$ be a rapidly increasing sequence of positive numbers with $n_0=1$, which satisfies
\begin{align}
	&n_k^{-\lambda-\epsilon/4}\le\psi(n_k)\le n_k^{-\lambda+\epsilon/4}  \mspace{12mu}\qquad\text{for $k\ge1$}, \label{eq:nkpsi}\\
	&n_k^{\epsilon/4}\ge 27(M+1)|q(\bm v_k)|n_{k-1}^\lambda  
	\qquad\text{for $k\ge1$}, \label{eq:nk>}\\
	&n_1\ge\max\bigl((M+1)^{5M/(1-\epsilon/4)},r_0^{-(\lambda-2+\epsilon)}\bigr), \label{eq:n1}
\end{align}
where $\bm v_k$ are the sequences given by~\eqref{eq:vk} and $r_0$ is the constant in Lemma~\ref{l:I(r)}~\eqref{le:cardIr}.

For $k\ge1$, set $Q_k=n_k^{1-\epsilon/4}$ and $r_k=n_k^{-(\lambda-2+\epsilon)}$. Recall the definition of~$I(r)$ and~$\Gamma_M(Q)$ from Lemmas~\ref{l:I(r)} and~\ref{l:OMQ}, respectively. Since $(n_k)$ is increasing, it follows from~\eqref{eq:n1} and Lemma~\ref{l:I(r)}~\eqref{le:cardIr} that, for $k\ge1$,
\begin{equation}\label{eq:QI(r)}
	Q_k\ge(M+1)^{5M}\quad\text{and}\quad 5n_k^{2\lambda-4+2\epsilon}\ge\sharp I(r_k)\ge n_k^{2\lambda-4+2\epsilon}.
\end{equation}
Recall that $\bm v_k$ are the sequences given by~\eqref{eq:vk}. Let $\Lambda_0=\Lambda'_0=\{\varnothing\}$. For $k\ge1$, define~$\Lambda_k$ and~$\Lambda_k'$ by
\begin{equation}\label{eq:Lam}
	\begin{aligned}
		\Lambda_k&=\prod_{j=1}^k\bigl(\Gamma_M(Q_j)\times\{\bm v_jb\colon b\in I(r_j)\}\bigr),\\
		\Lambda'_k&=\Lambda_{k-1}\times\Gamma_M(Q_k).
	\end{aligned}
\end{equation}

\begin{lem}\label{l:Lam}
	Let $\cst_0$ be the constant in Lemma~\ref{l:diamea}. For $k\ge1$, the following statements hold.
	\begin{enumerate}[\upshape(a)]
		\item \label{le:LamF} All the sequences in $\Lambda_k$ and $\Lambda_k'$ are full.
		\item \label{le:NLam} $\sharp\Lambda_k\ge n_k^{2\lambda}$ and $\sharp\Lambda'_k\ge n_k^{4-2\epsilon}$.
		\item \label{le:qLam} For $\bm a\in\Lambda_k$, $n_k^{\lambda-1} \le|q(\bm a)| \le n_k^{\lambda-1+\epsilon}$. Moreover,
		\[ \cst_0n_k^{-2\lambda+2-2\epsilon}\le|\cld(\bm a)|\le2n_k^{-2\lambda+2},\quad \lm\bigl(\cld(\bm a)\bigr)\ge\pi\cst_0n_k^{-4\lambda+4-4\epsilon}. \]
		\item \label{le:qLam'} For $\bm a'\in\Lambda_k'$, $n_k^{1-\epsilon/3}\le|q(\bm a')|<|q(\bm a'\bm v_k)|\le n_k/3$. Moreover,
		\[ |\cld(\bm a')|\le2n_k^{-2+\epsilon},\quad \lm\bigl(\cld(\bm a')\bigr)\ge\pi\cst_0n_k^{-4}. \]
		\item \label{le:dd<psi} Let $\bm a\in\Lambda_k$, if $z\in\cld(\bm a)$, then $|z-p(\bm a^-)/q(\bm a^-)|\le\psi(|q(\bm a^-)|)$.
		\item \label{le:dtcld} For $\bm a_1,\bm a_2\in\Lambda_k$ with $\bm a_1^-\ne\bm a_2^-$, the Euclidean distance
		\[ \dist\bigl(\cld(\bm a_1),\cld(\bm a_2)\bigr)\ge n_k^{-2}. \]
	\end{enumerate}
\end{lem}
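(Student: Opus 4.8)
The plan is to verify the six items essentially in the order listed, each one building on the previous ones, with the structural result being part~\eqref{le:LamF}.

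First, for~\eqref{le:LamF}, I would observe that each factor appearing in the products defining $\Lambda_k$ and $\Lambda_k'$ consists of full sequences: elements of $\Gamma_M(Q_j)$ are full by definition, and for a sequence of the form $\bm v_jb$ with $b\in I(r_j)$, full­ness follows from Lemma~\ref{l:uvb}~\eqref{le:uvbF} once I check the hypotheses — namely that $\cld(b)$ is full (which holds because $|b|\ge 1/r_j\ge 2\sqrt2$ by~\eqref{eq:n1}, via Lemma~\ref{l:I(r)}~\eqref{le:CbSR}) and that $\Im b\ge1$ (built into the definition of $I(r_j)$). Concatenations of full sequences are full by Lemma~\ref{l:newRC}~\eqref{le:uvsr}, so every $\bm a\in\Lambda_k$ and every $\bm a'\in\Lambda_k'$ is full; in particular, using \eqref{le:ur} and \eqref{le:usr}, a prefix of the form $\bm a^-$ (deleting the last letter $b$) factors as a full sequence followed by $\bm v_j$, which is regular.

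For~\eqref{le:NLam}, the cardinalities multiply: $\sharp\Lambda_k = \prod_{j=1}^k \sharp\Gamma_M(Q_j)\cdot\sharp I(r_j)$. Using Lemma~\ref{l:OMQ} (applicable since $Q_j\ge(M+1)^{5M}$ by~\eqref{eq:QI(r)}) we get $\sharp\Gamma_M(Q_j)\ge Q_j^{4-2/M} = n_j^{(1-\epsilon/4)(4-2/M)}$, and $\sharp I(r_j)\ge n_j^{2\lambda-4+2\epsilon}$ from~\eqref{eq:QI(r)}. Multiplying and using $M\ge 2\epsilon^{-1}$ (so $2/M\le\epsilon$) together with the rapid growth of $(n_j)$ to absorb the contributions of the earlier indices $j<k$ into a factor $n_k^{o(1)}$, the product is bounded below by $n_k^{2\lambda}$; similarly $\sharp\Lambda_k' = \sharp\Lambda_{k-1}\cdot\sharp\Gamma_M(Q_k)\ge n_k^{4-2\epsilon}$ after checking the exponent arithmetic. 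The growth condition~\eqref{eq:nk>} is what makes "the earlier factors are negligible" rigorous.

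For~\eqref{le:qLam} and~\eqref{le:qLam'}, I would estimate $|q(\bm a)|$ by repeatedly applying the submultiplicativity of $|q(\cdot)|$ from Lemma~\ref{l:props}~\eqref{le:|qq|} (bounds $|q(\bm b\bm c)|\asymp|q(\bm b)||q(\bm c)|$ up to the universal constants $1/5$ and $3$) across the factors, using that each $\bm u\in\Gamma_M(Q_j)$ has $Q_j\le|q(\bm u)|<(M+1)Q_j$ (by~\eqref{le:|q-|}) and each $\bm v_jb$ has $|q(\bm v_jb)|\asymp|q(\bm v_j)||b|\asymp|q(\bm v_j)|r_j^{-1}$; once more~\eqref{eq:nk>} makes the $j<k$ terms, and the fixed-length factors $|q(\bm v_j)|$, contribute only $n_k^{\pm\epsilon}$. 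Then $|\cld(\bm a)|$ and $\lm(\cld(\bm a))$ follow from Lemma~\ref{l:diamea} (legitimate since $\bm a$ is full, hence regular). For~\eqref{le:qLam'} the point is that $\bm a'\bm v_k$ is a prefix of some $\bm a\in\Lambda_k$, so its $q$-value is at most a constant multiple of $|q(\bm a)|/|b|\le n_k^{\lambda-1+\epsilon}/r_k^{-1} = n_k^{1+\dots}$; the constant $1/3$ is arranged by~\eqref{eq:nk>}.

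For~\eqref{le:dd<psi}: write $\bm a = \bm a^- b$ with $b = \bm v_j b'$'s last letter — more precisely $\bm a^-$ ends in $\bm v_k$ — and since $z\in\cld(\bm a)\subset\cld(\bm a^-)$, Lemma~\ref{l:props}~\eqref{le:z-pq} gives $|z-p(\bm a^-)/q(\bm a^-)|\le|q(\bm a^-)|^{-2}$; comparing $|q(\bm a^-)|^{-2}$ with $\psi(|q(\bm a^-)|)$ using the two-sided bounds $n_k^{1-\epsilon/3}\lesssim|q(\bm a^-)|$ (from~\eqref{le:qLam'}, as $\bm a^- = \bm a'\bm v_k$ with $\bm a'\in\Lambda_k'$... actually $\bm a^-=\bm a'\bm v_k$) and $|q(\bm a^-)|\lesssim n_k$ together with~\eqref{eq:nkpsi} and $\lambda>2$ yields the claim — here the monotonicity of $\psi$ and the slack afforded by $\epsilon$ close the inequality. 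Finally for~\eqref{le:dtcld}: if $\bm a_1^-\ne\bm a_2^-$ they differ at some coordinate within the common-length prefix, so $\cld(\bm a_1^-)$ and $\cld(\bm a_2^-)$ are disjoint cylinders of a common level $m$; the separation of two distinct level-$m$ regular cylinders is bounded below by a quantity comparable to the reciprocal of the square of the $q$-value of their common parent (this is where I must be a little careful — I would argue via the Möbius map $T_{\bm u}$ for the longest common prefix $\bm u$, whose derivative has modulus $\asymp|q(\bm u)|^{-2}$, and the fact that distinct level-one cylinders inside $\dmn_{\bm u}$ are separated by a fixed positive distance, as in the proof of Lemma~\ref{l:diamea}), and since $|q(\bm u)|\le|q(\bm a_1^-)|\lesssim n_k$ this gives a lower bound $\gtrsim n_k^{-2}$, hence $\ge n_k^{-2}$ after absorbing constants into the slack, provided $n_k$ is large (which it is, since $(n_k)$ is rapidly increasing).

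The main obstacle is the bookkeeping in~\eqref{le:qLam} and~\eqref{le:NLam}: the multiplicative constants from Lemma~\ref{l:props}~\eqref{le:|qq|} and the fixed-length factors $\bm v_j$ accumulate over $j=1,\dots,k$, and one must be sure the rapid-growth hypotheses~\eqref{eq:nk>},~\eqref{eq:n1} genuinely dominate them so that everything collapses to $n_k^{(\cdot)\pm\epsilon}$; and in~\eqref{le:dtcld} one must pin down the separation-of-cylinders estimate, which is the only place a genuinely geometric (rather than purely arithmetic) argument is needed.
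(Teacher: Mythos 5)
Your plan for parts (a)--(d) is essentially the paper's, though in (b) you overcomplicate matters: for the \emph{lower} bound $\sharp\Lambda_k\ge n_k^{2\lambda}$ you do not need rapid growth to control the factors with $j<k$, since they are all $\ge1$; the paper simply discards them via $\sharp\Lambda_{k-1}\ge1$ and reads off the bound from $\sharp\Gamma_M(Q_k)\cdot\sharp I(r_k)\ge(n_k^{1-\epsilon/4})^{4-\epsilon}\cdot n_k^{2\lambda-4+2\epsilon}>n_k^{2\lambda}$. The growth hypothesis~\eqref{eq:nk>} is needed in (c)--(d), where it controls the \emph{upper} bounds on $|q(\bm a)|$.

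There is a genuine gap in your argument for~\eqref{le:dd<psi}. You propose the crude inequality $|z-p(\bm a^-)/q(\bm a^-)|\le|q(\bm a^-)|^{-2}$ and then want $|q(\bm a^-)|^{-2}\le\psi(|q(\bm a^-)|)$. But with $|q(\bm a^-)|$ between $n_k^{1-\epsilon/3}$ and $n_k/3$ and $\psi(|q(\bm a^-)|)\asymp n_k^{-\lambda+O(\epsilon)}$, you would need $n_k^{-2+O(\epsilon)}\le n_k^{-\lambda+O(\epsilon)}$, which fails because $\lambda>2$ --- the deficit $(\lambda-2)\log n_k$ overwhelms any $O(\epsilon\log n_k)$ slack. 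The fix is the sharper equality in Lemma~\ref{l:props}~\eqref{le:z-pq}: since the partial quotient immediately following $\bm a^-$ is $b\in I(r_k)$ with $|b|\ge1/r_k=n_k^{\lambda-2+\epsilon}$, one gets $|z-p(\bm a^-)/q(\bm a^-)|<\bigl((|b|-2)|q(\bm a^-)|^2\bigr)^{-1}\lesssim n_k^{-\lambda-\epsilon/3}\le\psi(n_k)\le\psi(|q(\bm a^-)|)$. This extra factor $|b|^{-1}$ is the entire point of forcing $b$ large, and without it (e) is simply false. Relatedly, your plan for~\eqref{le:dtcld} leans on the claim that distinct level-one cylinders inside a prototype set are ``separated by a fixed positive distance,'' which is false: adjacent cylinders touch along their common boundary. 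The paper instead exploits the Gaussian rationals themselves: since $p(\bm a_1^-)/q(\bm a_1^-)\ne p(\bm a_2^-)/q(\bm a_2^-)$ (Lemma~\ref{l:uvb}~\eqref{le:Cuvk'}) and both denominators have modulus $<n_k/3$, the two rationals are $\ge9/n_k^2$ apart; combined with the (corrected) estimate $|z_i-p(\bm a_i^-)/q(\bm a_i^-)|\le n_k^{-\lambda-\epsilon/4}\le n_k^{-2}$ this gives $\dist(\cld(\bm a_1),\cld(\bm a_2))\ge7/n_k^2$. Both (e) and (f) thus rest on the refined approximation that your crude bound loses.
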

\begin{proof}
	(a) This follows from Lemma~\ref{l:newRC}~\eqref{le:uvsr} and Lemma~\ref{l:uvb}~\eqref{le:uvbF}.
	
	(b) By Lemma~\ref{l:OMQ}, \eqref{eq:QI(r)} and~$2/M\le\epsilon$,
	\[ \begin{split}
		\sharp\Lambda_k&=\sharp\Lambda_{k-1}\cdot\sharp\Gamma_M(Q_k) \cdot\sharp I(r_k)\ge\sharp\Gamma_M(Q_k)\cdot\sharp I(r_k)\\
		&\quad\ge Q_k^{4-2/M}\cdot n_k^{2\lambda-4+2\epsilon}\ge(n_k^{1-\epsilon/4})^{4-\epsilon}\cdot n_k^{2\lambda-4+2\epsilon}>n_k^{2\lambda}.\\
		\sharp\Lambda'_k&=\sharp\Lambda_{k-1} \cdot\sharp\Gamma_M(Q_k) \ge\sharp\Gamma_M(Q_k) \ge (n_k^{1-\epsilon/4})^{4-\epsilon} >n_k^{4-2\epsilon}.
	\end{split} \]
	
	(c) Write $\bm a=\tilde{\bm a}\bm u\bm v_kb$ with $\tilde{\bm a}\in\Lambda_{k-1}$, $\bm u\in\Gamma_M(Q_k)$ and $b\in I(r_k)$. By Lemma~\ref{l:props}~\eqref{le:|qq|} and~\eqref{eq:nk>},
	\[ |q(\bm a)|=|q(\tilde{\bm a}\bm u\bm v_kb)|>|q(\bm u)q(b)|/125\ge n_k^{1-\epsilon/4}\cdot n_k^{\lambda-2+\epsilon}/125>n_k^{\lambda-1}. \]
	We prove the upper bound by induction on~$k$. For $k=0$, we have $|q(\varnothing)|=1=n_0^{\lambda-1+\epsilon}$. Suppose this holds for $k-1$, by Lemma~\ref{l:props}~\eqref{le:|q-|}, \eqref{le:|qq|} and~\eqref{eq:nk>},
	\[ \begin{split}
		|q(\bm a)|&=|q(\tilde{\bm a}\bm u\bm v_kb)|\le 9|q(\tilde{\bm a})q(\bm u)q(\bm v_k)|(|b|+1)\\
		&<9n_{k-1}^{\lambda-1+\epsilon}\cdot(M+1)n_k^{1-\epsilon/4}\cdot|q(\bm v_k)|\cdot 3n_k^{\lambda-2+\epsilon}\\
		&\le 27(M+1)|q(\bm v_k)|n_{k-1}^\lambda\cdot n_k^{-\epsilon/4}\cdot n_k^{\lambda-1+\epsilon}\le n_k^{\lambda-1+\epsilon}.
	\end{split} \]
	The bounds for diameter and Lebesgue measure follow from Lemma~\ref{l:diamea}.
	
	(d) Lemma~\ref{l:props}~\eqref{le:1<q} gives $|q(\bm a')|<|q(\bm a'\bm u)|$. Note that \eqref{eq:nk>} implies $n_k^{\epsilon/4}>6^3$ since $M>12$. Write $\bm a'=\bm a\bm u$ with $\bm a\in\Lambda_{k-1}$ and $\bm u\in\Gamma_M(Q_k)$. By Lemma~\ref{l:props}~\eqref{le:|qq|},
	\[ |q(\bm a')|=|q(\bm a\bm u)|>|q(\bm a)q(\bm u)|/5 \ge|q(\bm u)|/5\ge n_k^{1-\epsilon/4}/5>n_k^{1-\epsilon/3}. \]
	By~\eqref{le:qLam}, Lemma~\ref{l:props}~\eqref{le:|q-|}, \eqref{le:|qq|} and~\eqref{eq:nk>},
	\[ \begin{split}
		|q(\bm a'\bm v_k)|&=|q(\bm a\bm u\bm v_k)|< 9|q(\bm a)q(\bm u)q(\bm v_k)| \le 9n_{k-1}^{\lambda-1+\epsilon}\cdot(M+1)n_k^{1-\epsilon/4}\cdot|q(\bm v_k)|\\
		&\le 27(M+1)|q(\bm v_k)|n_{k-1}^\lambda\cdot n_k^{-\epsilon/4}\cdot (n_k/3)\le n_k/3.
	\end{split} \]
	The bounds for diameter and Lebesgue measure follow from Lemma~\ref{l:diamea}.
	
	(e) Write $\bm a=\bm a'\bm v_kb$ with $\bm a'\in\Lambda_k'$ and $b\in I(r_k)$. For $z\in\cld(\bm a)$, let $n$ be the length of~$\bm a^-$, then $p(\bm a^-)=p_n$, $q(\bm a'\bm v_k)=q(\bm a^-)=q_n$ and $b=a_{n+1}$. By Lemma~\ref{l:props}~\eqref{le:z-pq}, 
	\[ \begin{split}
		\left|z-\frac{p(\bm a^-)}{q(\bm a^-)}\right|&=\left|z-\frac{p_n}{q_n}\right|=\frac{1}{|q_n|^2|b+T^{n+1}(z)+q_{n-1}/q_n|}<\frac{1}{(|b|-2)|q(\bm a'\bm v_k)|^2}\\
		&<\frac{2}{n_k^{\lambda-2+\epsilon}\cdot n_k^{2(1-\epsilon/3)}} <n_k^{-\lambda-\epsilon/4}\le\psi(n_k)<\psi(|q(\bm a^-)|).
	\end{split} \]
	Here the first inequality follows from $|T^{n+1}(z)|<1$ (by $T^{n+1}(z)\in\dmn$) and $|q_{n-1}|<|q_n|$ (by Lemma~\ref{l:props}~\eqref{le:1<q}); the second from the lower bound for $|q(\bm a'\bm v_k)|$ in~\eqref{le:qLam'}; the fourth from~\eqref{eq:nkpsi}; the last from the upper bound for $|q(\bm a'\bm v_k)|$ in~\eqref{le:qLam'} and the fact that $\psi$ is nonincreasing.
	
	(f) Write $\bm a_1=\bm a_1'\bm v_kb_1$ and $\bm a_2=\bm a_2'\bm v_kb_2$ with $\bm a_1',\bm a_2'\in\Lambda_k'$ and $b_1,b_2\in I(r_k)$. Then $\bm a_1^-=\bm a_1'\bm v_k$ and $\bm a_2^-=\bm a_2'\bm v_k$. By~\eqref{le:LamF}, both $\bm a_1'$ and $\bm a_2'$ are full. Hence, from Lemma~\ref{l:uvb}~\eqref{le:Cuvk'},
	\[ p(\bm a_1^-)/q(\bm a_1^-)\in\cld(\bm a_1'\tilde{\bm v}_k)\quad\text{and}\quad
	p(\bm a_2^-)/q(\bm a_2^-)\in\cld(\bm a_2'\tilde{\bm v}_k). \]
	The condition $\bm a_1^-\ne\bm a_2^-$ implies $\bm a_1'\ne\bm a_2'$, and so $p(\bm a_1^-)/q(\bm a_1^-)\ne p(\bm a_2^-)/q(\bm a_2^-)$, since $\cld(\bm a_1'\tilde{\bm v}_k)\cap\cld(\bm a_2'\tilde{\bm v}_k)=\emptyset$. Furthermore, since $p(\bm a_1^-)$, $q(\bm a_1^-)$, $p(\bm a_2^-)$ and $q(\bm a_2^-)$ are all Gaussian integers, we have
	\begin{equation}\label{eq:pq-pq>}
		\left|\frac{p(\bm a_1^-)}{q(\bm a_1^-)}-\frac{p(\bm a_2^-)}{q(\bm a_2^-)}\right| \ge\left|\frac{1}{q(\bm a_1^-)q(\bm a_2^-)}\right| \ge\frac{9}{n_k^2}.
	\end{equation}
	The last inequality follows from the fact that both $|q(\bm a_1^-)|$ and $|q(\bm a_2^-)|$ are less than~$n_k/3$ (by the upper bound for $|q(\bm a'\bm v_k)|$ in~\eqref{le:qLam'}).
	
	Pick $z_1\in\cld(\bm a_1)$ and $z_2\in\cld(\bm a_2)$. According to the proof of~\eqref{le:dd<psi},
	\[ |z_1-p(\bm a_1^-)/q(\bm a_1^-)|\le n_k^{-\lambda-\epsilon/4}\le n_k^{-2}. \]
	Similarly, $|z_2-p(\bm a_2^-)/q(\bm a_2^-)|\le n_k^{-2}$. Combining these inequalities and~\eqref{eq:pq-pq>} gives $|z_1-z_2|\ge7/n_k^2>n_k^{-2}$. This completes the proof.
\end{proof}

\subsection{Lower bounds for dimensions of $E(\psi)$} \label{ss:LB}

In this subsection, we will construct a subset $E\subset E(\psi)$ and then obtain the lower bounds of its dimensions. 

We follow the notation used in Section~\ref{ss:para}. For the definition of~$I(r)$ see Lemma~\ref{l:I(r)}; for $\Gamma_M(Q)$ see Lemma~\ref{l:OMQ}; for $\Lambda_k$ and $\Lambda_k'$ see~\eqref{eq:Lam}; for $\bm v_k$ see~\eqref{eq:vk}.

Let $\Omega_k=\Gamma_M(Q_k)\times\{\bm v_kb\colon b\in I(r_k)\}$ for $k\ge1$. Define $\Omega=\prod_{k\ge1}\Omega_k$ and
\begin{equation}\label{eq:E}
	E=\bigcap_{k\ge1}\bigcup_{\bm a\in\Lambda_k}\cld(\bm a).
\end{equation}
Let $\omega=\bm w_1\bm w_2\dotsc\in\Omega$ with 
\begin{equation}\label{eq:wk}
	\bm w_k=\bm u_k\bm v_kb_k\in\Omega_k=\Gamma_M(Q_k)\times\{\bm v_kb\colon b\in I(r_k)\}.
\end{equation}
By Lemma~\ref{l:Lam}~\eqref{le:LamF} and Lemma~\ref{l:uvb}~\eqref{le:uvbF},
\[ \overline{\cld(\bm w_1\ldots\bm w_{k-1}\bm w_k)} \subset\cld(\bm w_1\ldots\bm w_{k-1}\bm u_k)^\circ \subset\cld(\bm w_1\ldots\bm w_{k-1})^\circ. \]
So $\bigcap_{k\ge1}\cld(\bm w_1\ldots\bm w_k)\ne\emptyset$. In fact, it contains exactly one point $z=[0;\omega]$. Clearly, $\omega$ is the sequence of HCF partial quotients of~$z$. Note that $z\in E$ since $\bm w_1\ldots\bm w_k\in\Lambda_k$ for all $k\ge1$. Thus, $\Omega$ is just the set consisting of the sequences of HCF partial quotients of all points in~$E$. This induces a one-to-one map
\[ \Phi\colon\Omega\to E,\quad\omega\mapsto z=[0;\omega]. \]

Let $\nu_k$ be the counting measures on the set~$\Omega_k$. Define $\nu=\prod_{k\ge1}(\nu_k/\sharp\Omega_k)$ to be the product measure on~$\Omega$, and $\mu=\nu\circ\Phi^{-1}$ the projection measure of~$\nu$ under the map~$\Phi$. By the definition, $\mu$ is the unique probability measure supported on~$E$ such that, for $k\ge1$,
\begin{equation}\label{eq:mu}
	\begin{aligned}
		\mu\bigl(\cld(\bm a)\bigr)&=(\sharp\Lambda_k)^{-1} \qquad\text{for all $\bm a\in\Lambda_k$},\\
		\mu\bigl(\cld(\bm a')\bigr)&=(\sharp\Lambda_k')^{-1} \qquad\text{for all $\bm a'\in\Lambda_k'$}.
	\end{aligned}
\end{equation}

\begin{lem}\label{l:E}
	$E\subset E(\psi)$.
\end{lem}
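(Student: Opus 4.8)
The plan is to show that every $z\in E$ possesses infinitely many rational approximants witnessing membership in $E(\psi)$, namely one for each level $k$. Fix $z=[0;\omega]\in E$ with $\omega=\bm w_1\bm w_2\dotsc$ as in~\eqref{eq:wk}, so that $\bm a^{(k)}:=\bm w_1\dotsm\bm w_k\in\Lambda_k$ and $z\in\cld(\bm a^{(k)})$ for every $k\ge1$. Write $\bm a^{(k)}=\bm a'_k\bm v_kb_k$ with $\bm a'_k\in\Lambda'_k$ and $b_k\in I(r_k)$, and set $p^{(k)}=p((\bm a^{(k)})^-)=p(\bm a'_k\bm v_k)$ and $q^{(k)}=q((\bm a^{(k)})^-)=q(\bm a'_k\bm v_k)$. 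This $p^{(k)}/q^{(k)}$ is the candidate approximant; note it equals $[0;\bm a'_k\bm v_k]$, which by Lemma~\ref{l:vk}~\eqref{le:v=v'} also equals $[0;\bm a'_k\tilde{\bm v}_k]$.

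First I would verify the approximation inequality $|z-p^{(k)}/q^{(k)}|\le\psi(|q^{(k)}|)$: this is exactly Lemma~\ref{l:Lam}~\eqref{le:dd<psi} applied to $\bm a=\bm a^{(k)}\in\Lambda_k$. Second, I would pin down the difference $\dd(z,p^{(k)}/q^{(k)})$. Since $\bm a'_k$ is full (Lemma~\ref{l:Lam}~\eqref{le:LamF}), Lemma~\ref{l:uvb}~\eqref{le:uvdd} — applied with $\bm a$ there taken to be $\bm a'_k$, with the role of "$b$" played by $b_k$ (which satisfies $|b_k|\ge 2\sqrt2$, $\Im b_k\ge1$ because $b_k\in I(r_k)$) — gives $\dd(w,p^{(k)}/q^{(k)})=3k+2$ for every $w\in\cld(\bm a'_k\bm v_kb_k)=\cld(\bm a^{(k)})$; in particular for $w=z$. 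Hence $\dd(z,p^{(k)}/q^{(k)})=3k+2\to+\infty$ as $k\to\infty$. Third, I must check the $p^{(k)}/q^{(k)}$ are genuinely distinct for infinitely many $k$ (so that there are infinitely many approximants); this is immediate since the values $\dd(z,p^{(k)}/q^{(k)})=3k+2$ are pairwise distinct, forcing the pairs $(p^{(k)},q^{(k)})$ — more precisely the ratios together with their distance statistic — to be distinct, so infinitely many distinct rationals occur. Combining the three points, $z\in E(\psi)$ by the definition of $E(\psi)$.

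I do not expect any serious obstacle here: the lemma is essentially a bookkeeping corollary of the machinery already assembled in Section~\ref{ss:BRC} and Section~\ref{ss:para}. The one point requiring a little care is matching the hypotheses of Lemma~\ref{l:uvb} — the decomposition $\bm a^{(k)}=\bm a'_k\bm v_kb_k$ must be read with "$\bm a$" $=\bm a'_k$ (full) and the appended Gaussian integer $=b_k\in I(r_k)$, and one should note $\dd$ counts differences only over the first $N=|q^{(k)}|$-indexed block, i.e. over the length of $\bm a'_k\bm v_k$, which is precisely where $\bm v_k$ and $\tilde{\bm v}_k$ disagree in $3k+2$ places while $\bm a'_k$ agrees with itself. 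A secondary routine check is that $p^{(k)}/q^{(k)}\in\Q(i)$ with $q^{(k)}\in\Z[i]$, which is automatic from~\eqref{eq:Qpair}. After assembling these, the proof is a short paragraph.
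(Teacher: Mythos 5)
Your proposal is correct and follows essentially the same path as the paper's proof: decompose $\bm a_k := \bm w_1\dotsm\bm w_k$ as $\bm a_k'\bm v_k b_k$, invoke Lemma~\ref{l:Lam}~\eqref{le:dd<psi} for the approximation inequality and Lemma~\ref{l:uvb}~\eqref{le:uvdd} (via fullness of $\bm a_k'$ from Lemma~\ref{l:Lam}~\eqref{le:LamF}) for the growth $\dd(z,p^{(k)}/q^{(k)})=3k+2\to\infty$. Your distinctness remark, while harmless and valid, is not needed by the definition of $E(\psi)$ and is left implicit in the paper; also note the small slip in writing ``$N=|q^{(k)}|$'' where $N$ should be the number of HCF partial quotients of $p^{(k)}/q^{(k)}$, i.e.\ the length of $\bm a_k'\bm v_k$, not the modulus of $q^{(k)}$.
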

\begin{proof}
	Pick $z\in E$, let $\omega\in\Omega$ be its sequence of HCF partial quotients of the form $\omega=\bm w_1\bm w_2\dotsc$ with $\bm w_k=\bm u_k\bm v_kb_k$ given by~\eqref{eq:wk}. Let $\bm a_0=\varnothing$ and for $k\ge1$,
	\[ \bm a_k=\bm w_1\ldots\bm w_k\in\Lambda_k,\quad \bm a_k'=\bm a_{k-1}\bm u_k\in\Lambda_k'. \]
	then $\bm a_k=\bm a_k'\bm v_kb_k$ and $\bm a_k^-=\bm a_k'\bm v_k$. Clearly, $z\in\cld(\bm a_k)$. By Lemma~\ref{l:Lam}~\eqref{le:LamF}, $\bm a_k'$ is full. It follows from Lemma~\ref{l:uvb}~\eqref{le:uvdd} and Lemma~\ref{l:Lam}~\eqref{le:dd<psi} that, for all $k\ge1$,
	\[ \dd\bigl(z,p(\bm a_k^-)/q(\bm a_k^-)\bigr)=3k+2,\quad
	|z-p(\bm a_k^-)/q(\bm a_k^-)|\le\psi(|q(\bm a_k^-)|). \]
	Hence, $z\in E(\psi)$, and so $E\subset E(\psi)$.
\end{proof}

Let $(n_k)_{k\ge0}$ be the sequence given in~Section~\ref{ss:para}. 
\begin{lem}\label{l:meaB}
	There exists a constant~$\cst$ with the following property. Let $z\in E$ and $0<r<n_1^{-2}$. Suppose that $n_{k+1}^{-2}\le r<n_k^{-2}$ for some $k\ge1$, then
	\begin{enumerate}[\upshape(a)]
		\item \label{le:rnk+1} $\mu\bigl(B(z,r)\bigr)\le\cst r^{2-2\epsilon}$ for $n_{k+1}^{-2}\le r<n_{k+1}^{-2+\epsilon}$;
		\item $\mu\bigl(B(z,r)\bigr)\le\cst r^{\lambda/(\lambda-1)-3\epsilon}$ for $n_{k+1}^{-2+\epsilon}\le r<n_k^{-2\lambda+2-2\epsilon}$;
		\item $\mu\bigl(B(z,r)\bigr)\le\cst r^{4/\lambda-4\epsilon}$ for $n_k^{-2\lambda+2-2\epsilon}\le r<n_k^{-2}$.
	\end{enumerate}
\end{lem}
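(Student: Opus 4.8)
The plan is to estimate $\mu\bigl(B(z,r)\bigr)$ by comparing the ball $B(z,r)$ with the cylinder structure underlying~$E$, using the two-scale hierarchy $\Lambda_k\supset\Lambda_k'\supset\Lambda_{k-1}$ together with the measure formula~\eqref{eq:mu} and the geometric estimates in Lemma~\ref{l:Lam}. The general strategy for each of the three regimes is: find the coarsest cylinder in the hierarchy whose diameter is comparable to or larger than~$r$, bound how many such cylinders $B(z,r)$ can meet using the separation estimate Lemma~\ref{l:Lam}~\eqref{le:dtcld} (or the trivial packing bound from Lebesgue measure via Lemma~\ref{l:Lam}~\eqref{le:qLam}--\eqref{le:qLam'}), multiply by the $\mu$-measure of one such cylinder, and then optimize the resulting bound as a power of~$r$.

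More concretely, I would proceed as follows. In regime (a), where $n_{k+1}^{-2}\le r<n_{k+1}^{-2+\epsilon}$, the relevant cylinders are those indexed by $\Lambda_{k+1}'$: by Lemma~\ref{l:Lam}~\eqref{le:qLam'} each has diameter $\le 2n_{k+1}^{-2+\epsilon}$ and Lebesgue measure $\ge\pi\cst_0 n_{k+1}^{-4}$, so at most $O(r^2 n_{k+1}^4)$ of them meet $B(z,r)$; multiplying by $\mu\bigl(\cld(\bm a')\bigr)=(\sharp\Lambda_{k+1}')^{-1}\le n_{k+1}^{-4+2\epsilon}$ (Lemma~\ref{l:Lam}~\eqref{le:NLam}) gives a bound $O(r^2 n_{k+1}^{2\epsilon})$, and since $r\ge n_{k+1}^{-2}$ we have $n_{k+1}^{2\epsilon}\le r^{-\epsilon}$, yielding $O(r^{2-\epsilon})$, hence certainly $O(r^{2-2\epsilon})$. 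In regime (c), where $n_k^{-2\lambda+2-2\epsilon}\le r<n_k^{-2}$, the relevant cylinders are those indexed by $\Lambda_k$: here $B(z,r)$ can meet several cylinders $\cld(\bm a)$, but by Lemma~\ref{l:Lam}~\eqref{le:dtcld} the ones with distinct truncations $\bm a^-$ are $n_k^{-2}$-separated, and $r<n_k^{-2}$, so all cylinders met share a common~$\bm a^-$, differing only in the last letter $b\in I(r_k)$; I would bound the number of such $b$ either by $\sharp I(r_k)\le 5n_k^{2\lambda-4+2\epsilon}$ or, better, by $O(r^2/\lm(\cld(\bm a)))=O(r^2 n_k^{4\lambda-4+4\epsilon})$ (Lemma~\ref{l:Lam}~\eqref{le:qLam}), then multiply by $\mu(\cld(\bm a))=(\sharp\Lambda_k)^{-1}\le n_k^{-2\lambda}$, and compare the two competing bounds $n_k^{-2\lambda}\cdot\min(r^2 n_k^{4\lambda-4+4\epsilon},\,5n_k^{2\lambda-4+2\epsilon})$ against powers of~$r$; optimizing at the crossover, using $n_k^{-2\lambda+2-2\epsilon}\le r<n_k^{-2}$, should produce the exponent $4/\lambda$ up to an $O(\epsilon)$ loss.

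Regime (b), where $n_{k+1}^{-2+\epsilon}\le r<n_k^{-2\lambda+2-2\epsilon}$, is the transitional range between the two cylinder levels $\Lambda_k$ and $\Lambda_{k+1}'$, and I expect it to be the main obstacle. Here the scale $r$ is much smaller than the diameter of a level-$k$ cylinder (which is $\asymp n_k^{-2\lambda+2}$ up to the $\epsilon$-factor) but much larger than the diameter of a level-$(k+1)'$ cylinder; so the ball $B(z,r)$ sits strictly inside a single $\cld(\bm a)$, $\bm a\in\Lambda_k$, and the question becomes how the conditional measure distributes at intermediate scales. The point is that within $\cld(\bm a)$ the next structural feature is the splitting into the $\sharp\Gamma_M(Q_{k+1})$ sub-cylinders indexed by $\Gamma_M^{\bm a}(Q_{k+1})$ (whose cardinality is controlled by Lemma~\ref{l:CaQ}), and I would use Lemma~\ref{l:CaQ} together with the diameter bound $|q(\bm a)|\ge n_k^{\lambda-1}$ to count how many such sub-cylinders of the appropriate level meet $B(z,r)$ — again via a Lebesgue-measure packing argument — and multiply by their common $\mu$-mass $(\sharp\Lambda_{k+1}')^{-1}$. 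Balancing the count of sub-cylinders against the size of $Q_{k+1}$-blocks, and tracking the exponents carefully through Lemma~\ref{l:CaQ}, should give the exponent $\lambda/(\lambda-1)$; the delicate part is choosing the right intermediate level inside $\bm a$ so that the cylinders at that level have diameter $\asymp r$, and checking that the separation/packing estimates survive the conditioning — this bookkeeping, rather than any single hard inequality, is where the real work lies.
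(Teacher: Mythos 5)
Your plan matches the paper's proof in structure: level-$\Lambda_{k+1}'$ cylinders for (a), interpolation of the $I(r_k)$-bound against a Lebesgue packing bound for (c), and intermediate-scale cylinders controlled by Lemma~\ref{l:CaQ} for (b). However, the packing counts you write down in (a) and (c) are not quite justified. You claim that the number of disjoint cylinders of Lebesgue measure $\ge m$ meeting $B(z,r)$ is $O(r^2/m)$, but this is only valid when the cylinders have diameter $\lesssim r$. In (a) the cylinders of $\Lambda_{k+1}'$ have diameter up to $2n_{k+1}^{-2+\epsilon}$ while $r$ can be as small as $n_{k+1}^{-2}$, and in (c) the cylinders of $\Lambda_k$ have diameter up to $2n_k^{-2\lambda+2}$ while $r$ can be as small as $n_k^{-2\lambda+2-2\epsilon}$; in both cases the diameter can exceed $r$ by a factor $n^{O(\epsilon)}$. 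The correct count is $O((r+D)^2/m)$ where $D$ is the maximal cylinder diameter; the paper handles this by noting that all such cylinders lie in the enlarged ball $B(z,3n_{k+1}^{-2+\epsilon})$ (resp.\ $B(z,3n_k^{2\epsilon}r)$), and the extra $n^{O(\epsilon)}$ loss is absorbed into the final exponent. Without this enlargement your intermediate count $O(r^2 n_{k+1}^4)$ is an unjustified underestimate. For part (b), you have identified the right tool and the right idea (choose a prefix $\bm a\bm w$ of $\bm a'\in\Lambda_{k+1}'$ with $|q(\bm a\bm w)|^{-2}\asymp r$, which forces the count of such intermediate cylinders meeting $B(z,r)$ to be bounded by an absolute constant), but one detail in your sketch is off: the intermediate cylinders $\cld(\bm a\bm w)$ do not carry the common mass $(\sharp\Lambda_{k+1}')^{-1}$ — their $\mu$-mass is $\sharp\Gamma_M^{\bm w}(Q_{k+1})/\sharp\Lambda_{k+1}'$, which depends on $\bm w$, and it is precisely this quantity that Lemma~\ref{l:CaQ} bounds in terms of $|q(\bm w)|\gtrsim r^{-1/2}/|q(\bm a)|$; working this through gives the exponent $\lambda/(\lambda-1)$.
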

\begin{proof}
	(a) For $z\in E$, let 
	\[ \Lambda_{k+1}'(z)=\bigl\{\bm a'\in\Lambda_{k+1}'\colon\cld(\bm a')\cap B(z,r)\ne\emptyset\bigr\}. \]
	By Lemma~\ref{l:Lam}~\eqref{le:qLam'}, $|\cld(\bm a')|\le2n_{k+1}^{-2+\epsilon}$ for $\bm a'\in\Lambda_{k+1}'$. Combining this and the condition $r<n_{k+1}^{-2+\epsilon}$ gives
	\[ \bm a'\in\Lambda_{k+1}'(z)\implies \cld(a')\subset B(z,3n_{k+1}^{-2+\epsilon}). \]
	Hence, by the lower bound for $\lm\bigl(\cld(\bm a')\bigr)$ in Lemma~\ref{l:Lam}~\eqref{le:qLam'},
	\[ \sharp\Lambda_{k+1}'(z)\le\frac{\lm(B(z,3n_{k+1}^{-2+\epsilon}))}{\min_{\bm a'\in\Lambda_{k+1}'}\lm(\cld(\bm a'))} \le\frac{9\pi n_{k+1}^{-4+2\epsilon}}{\pi\cst_0n_{k+1}^{-4}} =9\cst_0^{-1}n_{k+1}^{2\epsilon}. \]
	It follows from Lemma~\ref{l:Lam}~\eqref{le:NLam} and~\eqref{eq:mu} that
	\[ \mu\bigl(B(z,r)\bigr)\le\sum_{\bm a'\in\Lambda_{k+1}'(z)}\mu\bigl(\cld(\bm a')\bigr) \le\frac{\sharp\Lambda_{k+1}'(z)}{\sharp\Lambda_{k+1}'} \le\frac{9\cst_0^{-1}n_{k+1}^{2\epsilon}}{n_{k+1}^{4-2\epsilon}} =9\cst_0^{-1}n_{k+1}^{-4+4\epsilon}. \]
	Finally, we have $\mu\bigl(B(z,r)\bigr) \le9\cst_0^{-1}r^{2-2\epsilon}$ since $n_{k+1}^{-2}\le r$.
	
	(b) By Lemma~\ref{l:Lam}~\eqref{le:qLam} and~\eqref{le:qLam'}, the condition on~$r$ means that, for all $\bm a'\in\Lambda_{k+1}'$ and $\bm a\in\Lambda_k$,
	\[ |q(\bm a')|^{-2}< r<|q(\bm a)|^{-2}. \]
	Given $z\in E$, we want to find all the cylinders that intersect $B(z,r)$ and of diameter close to~$r$. The bounds above suggest considering the set $\Lambda_k^+(z)$ consisting of sequences with the form $\bm a\bm w$ such that $\bm a\in\Lambda_k$, $\bm a\bm w$ is a prefix of a sequence $\bm a'\in\Lambda_{k+1}'$ and
	\[ \cld(\bm a\bm w)\cap B(z,r)\ne\emptyset,\quad |q(\bm a\bm w)|^{-2}\le r<|q(\bm a\bm w^-)|^{-2}. \]

	For $\bm a\bm w\in\Lambda_k^+(z)$, by Lemma~\ref{l:diamea}, $|\cld(\bm a\bm w)|\le2|q(\bm a\bm w)|^{-2}\le2r$, and so $\cld(\bm a\bm w)\subset B(z,3r)$. Using Lemma~\ref{l:diamea} again, we have
	\[ \lm\bigl(\cld(\bm a\bm w)\bigr)\ge\frac{\pi\cst_0}{|q(\bm a\bm w)|^4} >\frac{\pi\cst_0}{(M+1)^4|q(\bm a\bm w^-)|^4} >\frac{\pi\cst_0r^2}{(M+1)^4}. \]
	Here the second inequality follows from Lemma~\eqref{l:props}~\eqref{le:|q-|}. Therefore,
	\begin{equation}\label{eq:Lamk+}
		\sharp\Lambda_k^+(z)\le\frac{\lm\bigl(B(z,3r)\bigr)}{\min_{\bm a\bm w\in\Lambda_k^+(z)}\lm(\cld(\bm a\bm w))} \le\frac{9\pi r^2}{\pi\cst_0(M+1)^{-4}r^2}=9(M+1)^4\cst_0^{-1}.
	\end{equation}
	
	We now turn to bound $\mu\bigl(\cld(\bm a\bm w)\bigr)$ from above. Recall from Lemma~\ref{l:CaQ} that
	\[ \Gamma_M^{\bm w}(Q_{k+1})=\bigl\{\bm b\colon\bm{wb}\in\Gamma_M(Q_{k+1})\bigr\} \le\frac{(M+1)^{24M} \sharp\Gamma_M(Q_{k+1})}{|q(\bm w)|^{4-2/M}}. \]
	Recall that $2/M\le\epsilon$. By Lemma~\ref{l:props}~\eqref{le:|qq|} and Lemma~\ref{l:Lam}~\eqref{le:qLam},
	\[ |q(\bm w)|^{-(4-2/M)}\le\bigl(3|q(\bm a)|\cdot|q(\bm a\bm w)|^{-1}\bigr)^{4-\epsilon} \le(3n_k^{\lambda-1+\epsilon}r^{1/2})^{4-\epsilon} < 81n_k^{4\lambda-4+4\epsilon}r^{2-\epsilon}.  \]
	Let $c=81(M+1)^{24M}$. It follows from~\eqref{eq:mu} that, for all $\bm a\bm w\in\Lambda_k^+(z)$,
	\begin{equation}\label{eq:caw}
		\begin{split}
			\mu\bigl(\cld(\bm a\bm w)\bigr)&=\frac{\sharp\Gamma_M^{\bm w}(Q_{k+1})}{\sharp\Lambda_{k+1}'} \le c\frac{n_k^{4\lambda-4+4\epsilon}r^{2-\epsilon}\cdot\sharp\Gamma_M(Q_{k+1})}{\sharp\Lambda_k\cdot\sharp\Gamma_M(Q_{k+1})} \le c\frac{n_k^{4\lambda-4+4\epsilon}r^{2-\epsilon}}{n_k^{2\lambda}}\\
			&=cr^{2-\epsilon}n_k^{2\lambda-4+4\epsilon} \le cr^{2-\epsilon-\frac{\lambda-2+2\epsilon}{\lambda-1}} \le cr^{\frac{\lambda}{\lambda-1}-3\epsilon}.
		\end{split}
	\end{equation}
	Here we use Lemma~\ref{l:Lam}~\eqref{le:NLam} and the condition $r<n_k^{-2\lambda+2-2\epsilon}<n_k^{-2\lambda+2}$.
	
	Finally, since $\mu\bigl(B(z,r)\bigr)\le\sum_{\bm a\bm w\in\Lambda_k^+(z)}\mu\bigl(\cld(\bm a\bm w)\bigr)$, the conclusion follows from~\eqref{eq:Lamk+} and~\eqref{eq:caw}.
	
	(c) For $z\in E$, let
	\[ \Lambda_k(z)=\{\bm a\in\Lambda_k\colon \cld(\bm a)\cap B(z,r)\ne\emptyset\}. \]
	In order to get the best upper bound for $\mu\bigl(B(z,r)\bigr)$, we need to use two methods to bound $\sharp\Lambda_k(z)$ from above. The first one is the same as that in the proof of~\eqref{le:rnk+1}. By Lemma~\ref{l:Lam}~\eqref{le:qLam}, $|\cld(\bm a)|\le2n_k^{-2\lambda+2}$ for $\bm a\in\Lambda_k$. Since $r\ge n_k^{-2\lambda+2-2\epsilon}$, we have
	\[ \bm a\in\Lambda_k(z)\implies \cld(\bm a)\subset B(z,3n_k^{2\epsilon}r). \]
	By the lower bound for $\lm\bigl(\cld(\bm a')\bigr)$ in Lemma~\ref{l:Lam}~\eqref{le:qLam'},
	\[ \sharp\Lambda_k(z)\le\frac{\lm(B(z,3n_k^{2\epsilon}r))}{\min_{\bm a\in\Lambda_k}\lm(\cld(\bm a))} \le\frac{9\pi n_k^{4\epsilon}r^2}{\pi\cst_0n_k^{-4\lambda+4-4\epsilon}} =9\cst_0^{-1}n_k^{4(\lambda-1+2\epsilon)}r^2. \]
	
	The second method is to apply Lemma~\ref{l:Lam}~\eqref{le:dtcld}. Since $r<n_k^{-2}$, it implies that, if $\bm a_1,\bm a_2\in\Lambda_k(z)$, then $\bm a_1^-=\bm a_2^-$. Hence, by~\eqref{eq:QI(r)} and~\eqref{eq:Lam}, we have 
	\[ \sharp\Lambda_k(z)\le\sharp I(r_k)\le5n_k^{2\lambda-4+2\epsilon}. \]
	Note that $\cst_0<1$ and $r<n_k^{-2}$. From the above two upper bounds,
	\begin{align*}
		\sharp\Lambda_k(z)&\le\min\big\{9\cst_0^{-1}n_k^{4(\lambda-1+2\epsilon)}r^2, 5n_k^{2\lambda-4+2\epsilon} \big\}\le9\cst_0^{-1}n_k^{8\epsilon}\min\big\{n_k^{4\lambda-4}r^2, n_k^{2\lambda-4} \big\}\\
		 &\le9\cst_0^{-1}n_k^{8\epsilon}(n_k^{4\lambda-4}r^2)^{2/\lambda}(n_k^{2\lambda-4})^{1-2/\lambda}=9\cst_0^{-1}r^{4/\lambda-4\epsilon}n_k^{2\lambda}. 
	\end{align*}
	It follows from~\eqref{eq:mu} and Lemma~\ref{l:Lam}~\eqref{le:NLam} that
	\[ \mu\bigl(B(z,r)\bigr)\le\frac{\sharp\Lambda_k(z)}{\sharp\Lambda_k} \le\frac{9\cst_0^{-1}r^{4/\lambda-4\epsilon}n_k^{2\lambda}}{n_k^{2\lambda}} \le 9\cst_0^{-1}r^{4/\lambda-4\epsilon}. \qedhere \]
\end{proof}

\begin{proof}[Proof of Theorems~\ref{t:result} and~\ref{t:Wpsi}]
	It is clear that $2>\lambda/(\lambda-1)>4/\lambda$ for $\lambda>2$. So Lemma~\ref{l:meaB} implies that, for all $z\in E$,
	\[ \liminf_{r\to0}\frac{\log\mu(B(z,r))}{\log r}\ge4/\lambda-4\epsilon, \quad \limsup_{r\to0}\frac{\log\mu(B(z,r))}{\log r}\ge2-2\epsilon. \]
	Hence, the mass distribution principle (\cite[Proposition~2.3]{Falco97}) gives 
	\[ \hdim E\ge4/\lambda-4\epsilon \quad\text{and}\quad \pdim E\ge2-2\epsilon. \]
	Since $E(\psi)\supset E$ (by Lemma~\ref{l:E}) and $\epsilon$ is arbitrary, we get 
	\[ \hdim E(\psi)\ge4/\lambda \quad\text{and}\quad \pdim E(\psi)\ge2. \]
	Combining this and the upper bounds for dimensions of~$W(\psi)$ in Section~\ref{ss:UB}, we complete the proof since $E(\psi)\subset W(\psi)$.
\end{proof}


\begin{thebibliography}{10}

\bibitem{Besic34}
A.~S. Besicovitch.
\newblock Sets of {F}ractional {D}imensions ({IV}): {O}n {R}ational
  {A}pproximation to {R}eal {N}umbers.
\newblock {\em J. London Math. Soc.}, 9(2):126--131, 1934.




\bibitem{DanNo14}
S.~G. Dani and A.~Nogueira.
\newblock Continued fractions for complex numbers and values of binary
  quadratic forms.
\newblock {\em Trans. Amer. Math. Soc.}, 366(7):3553--3583, 2014.

\bibitem{Dicki97}
H.~Dickinson.
\newblock A note on the theorem of {J}arn\'{\i}k-{B}esicovitch.
\newblock {\em Glasgow Math. J.}, 39(2):233--236, 1997.

\bibitem{Dodso91}
M.~M. Dodson.
\newblock Star bodies and {D}iophantine approximation.
\newblock {\em J. London Math. Soc. (2)}, 44(1):1--8, 1991.

\bibitem{Dodso92}
M.~M. Dodson.
\newblock Hausdorff dimension, lower order and {K}hintchine's theorem in metric
  {D}iophantine approximation.
\newblock {\em J. Reine Angew. Math.}, 432:69--76, 1992.

\bibitem{EiINN19}
H.~Ei, S.~Ito, H.~Nakada, and R.~Natsui.
\newblock On the construction of the natural extension of the {H}urwitz complex
  continued fraction map.
\newblock {\em Monatsh. Math.}, 188(1):37--86, 2019.

\bibitem{Falco97}
K.~Falconer.
\newblock {\em Techniques in fractal geometry}.
\newblock John Wiley \& Sons Ltd., Chichester, 1997.

\bibitem{Fatou04}
	P.~Fatou.
	\newblock {`Sur l’approximation des incommensurables et des s\'{e}ries
		trigonom\'{e}triques'}.
	\newblock {\em C. R. Acad. Sci. Paris}, 139:1019--1021, 1904.			

\bibitem{Gonza18}
G.~G.~Robert.
\newblock {\em Complex Continued Fractions: theoretical Aspects of Hurwitz’s
  Algorithm}.
\newblock PhD thesis, Aarhus University, 2018.

\bibitem{Grace18}
J.~H. Grace.
\newblock {The classification of rational approximations}.
\newblock {\em Proc. London Math. Soc.}, 17(2):247--258, 1918.	

\bibitem{HarWr08}
G.~H. Hardy and E.~M. Wright.
\newblock {\em An introduction to the theory of numbers}.
\newblock Oxford University Press, Oxford, sixth edition, 2008.
\newblock Revised by D. R. Heath-Brown and J. H. Silverman, With a foreword by
  Andrew Wiles.

\bibitem{Hensl06}
D.~Hensley.
\newblock {\em Continued fractions}.
\newblock World Scientific Publishing Co. Pte. Ltd., Hackensack, NJ, 2006.

\bibitem{HiaVa18}
G.~{Hiary} and J.~{Vandehey}.
\newblock {Calculations of the invariant measure for Hurwitz Continued
  Fractions}.
\newblock {\em Exp. Math.}, published online. DOI:
10.1080/10586458.2019.1627255.

\bibitem{Hurwi87}
A.~Hurwitz.
\newblock \"{U}ber die {E}ntwicklung complexer {G}r\"{o}ssen in
  {K}ettenbr\"{u}che.
\newblock {\em Acta Math.}, 11(1-4):187--200, 1887.

\bibitem{Jarn29}
V.~Jarn\'{\i}k.
\newblock Diophantischen approximationen und hausdorffsches mass.
\newblock {\em Mat. Sbornik}, 36:371--382, 1929.

\bibitem{Jarn31}
V.~Jarn\'{\i}k.
\newblock \"{U}ber die simultanen diophantischen {A}pproximationen.
\newblock {\em Math. Z.}, 33(1):505--543, 1931.

\bibitem{KimLi19}
D.~H. Kim and L.~M. Liao.
\newblock Dirichlet uniformly well-approximated numbers.
\newblock {\em Int. Math. Res. Not. IMRN}, (24):7691--7732, 2019.

\bibitem{Lakei73}
R.~B. Lakein.
\newblock Approximation properties of some complex continued fractions.
\newblock {\em Monatsh. Math.}, 77:396--403, 1973.

\bibitem{WanWu17}
B.~W. Wang and J.~Wu.
\newblock A survey on the dimension theory in dynamical {D}iophantine
  approximation.
\newblock In {\em Recent developments in fractals and related fields}, Trends
  Math., pages 261--294. Birkh\"{a}user/Springer, Cham, 2017.

\end{thebibliography}
\end{document}